\RequirePackage{luatex85}
\documentclass[a4paper,oneside,reqno]{amsart}

\usepackage[english]{babel}
\usepackage[utf8]{inputenc}		
\usepackage{lmodern}
\linespread{1.125}				
\usepackage[scaled]{helvet}		
\usepackage{courier}			
\usepackage{eulervm}			
\usepackage[bb=boondox,bbscaled=1.05,scr=dutchcal]{mathalfa}	%
\usepackage{dsfont}
\normalfont

\usepackage{indentfirst}				
\usepackage{tabularx,longtable,booktabs,multirow}			
\usepackage{caption,subcaption}			
\usepackage{fullpage}				    
\usepackage[english]{varioref}			
\usepackage[dvipsnames]{xcolor}			
\usepackage[hyperindex,breaklinks]{hyperref}		
\usepackage{doi}
\usepackage{bookmark}					
\usepackage{textcomp}
\usepackage{rotating,afterpage,pdflscape}
\usepackage{pbox}
\usepackage[all]{xy}

\setlength\parindent{0pt} 
\setlength{\parskip}{.5em} 

\usepackage{graphicx}		
\usepackage{float}
\usepackage{tikz}			
\usepackage{tikz-cd}		
\usetikzlibrary{
	arrows,decorations.pathreplacing,decorations.markings,decorations.pathmorphing,shapes.geometric,positioning,calc,fadings,angles
	}
\tikzfading[name=inner fade, inner color=transparent!0, outer color=transparent!100]
\graphicspath{{Images/}}
\usepackage[export]{adjustbox}

\usepackage{amsmath,amssymb,amsthm,mathtools}
\usepackage{bm,braket,faktor,esint,stmaryrd,bbm}
\usepackage{tcolorbox}
\usepackage[export]{adjustbox}

\definecolor{webbrown}{rgb}{0.65, 0.16, 0.16}

\hypersetup{
colorlinks=true, linktocpage=true, pdfstartpage=1, pdfstartview=FitV,breaklinks=true, pdfpagemode=UseNone, pageanchor=true, pdfpagemode=UseOutlines,plainpages=false, bookmarksnumbered, bookmarksopen=true, bookmarksopenlevel=1,hypertexnames=true, pdfhighlight=/O,urlcolor=webbrown, linkcolor=RoyalBlue, citecolor=ForestGreen}

\usepackage{enumerate,enumitem}
\numberwithin{equation}{section}

\setcounter{tocdepth}{2}
\makeatletter
\def\l@subsection{\@tocline{2}{0pt}{2.5pc}{5pc}{}}
\makeatother

\usepackage[textsize=footnotesize]{todonotes}
\presetkeys%
		{todonotes}%
		{color=Apricot}{}%
\setlength{\marginparwidth}{2cm}

\usepackage[style=alphabetic,maxalphanames=5,giveninits,sorting=nyt,%
maxbibnames=99,backend=biber]{biblatex}
\renewbibmacro{in:}{}
\usepackage{csquotes}
\addbibresource{Bibliography.bib}

\setlength\parindent{0pt}

\setcounter{biburlnumpenalty}{9000}

\usepackage[noabbrev]{cleveref}
\expandafter\def\csname ver@etex.sty\endcsname{3000/12/31}

\crefname{lemma}{lemma}{lemmata}
\Crefname{lemma}{Lemma}{Lemmata}
\crefname{introthm}{theorem}{theorems}
\Crefname{introthm}{Theorem}{Theorems}
\crefname{subsection}{subsection}{subsections}
\Crefname{subsection}{Subsection}{Subsections}
\crefname{conjecture}{conjecture}{conjectures}
\Crefname{conjecture}{Conjecture}{Conjectures}
\crefname{question}{question}{questions}
\Crefname{question}{Question}{Questions}
\crefname{warning}{warning}{warnings}
\Crefname{warning}{Warning}{Warnings}


\newcommand{\mf}[1]{\mathfrak{#1}}
\newcommand{\mc}[1]{\mathcal{#1}}
\newcommand{\mb}[1]{\mathbb{#1}}
\newcommand{\ms}[1]{\mathsf{#1}}

\renewcommand{\theta}{\vartheta}
\renewcommand{\epsilon}{\varepsilon}
\renewcommand{\phi}{\varphi}
\DeclareSymbolFont{greekletters}{OML}{cmr}{m}{it}
\DeclareMathSymbol{\varsigma}{\mathalpha}{greekletters}{"26}	
\usepackage{pdftexcmds}						
\DeclareFontFamily{U}{cbgreek}{}
\DeclareFontShape{U}{cbgreek}{m}{n}{
				<-6>    grmn0500
				<6-7>   grmn0600
				<7-8>   grmn0700
				<8-9>   grmn0800
				<9-10>  grmn0900
				<10-12> grmn1000
				<12-17> grmn1200
				<17->   grmn1728
			}{}
\DeclareFontShape{U}{cbgreek}{bx}{n}{
				<-6>    grxn0500
				<6-7>   grxn0600
				<7-8>   grxn0700
				<8-9>   grxn0800
				<9-10>  grxn0900
				<10-12> grxn1000
				<12-17> grxn1200
				<17->   grxn1728
			}{}
\DeclareRobustCommand{\qoppa}{%
	\text{\usefont{U}{cbgreek}{\normalorbold}{n}\symbol{19}}%
}

\makeatletter
\newcommand{\normalorbold}{%
	\ifnum\pdf@strcmp{\math@version}{bold}=\z@ bx\else m\fi
}
\makeatother

\newcommand{\bbraket}[1]{\llbracket #1 \rrbracket}
\newcommand{\pparenthesis}[1]{(\!( #1 )\!)}
\newcommand{\normord}[1]{{\vcentcolon}\!\mathrel{#1}\!{\vcentcolon}}

\newcommand{\<}{\langle}
\renewcommand{\>}{\rangle}


\newcommand{\Z}{\mathbb{Z}}
\newcommand{\Q}{\mathbb{Q}}
\newcommand{\R}{\mathbb{R}}
\newcommand{\C}{\mathbb{C}}

\renewcommand{\P}{\mb{P}}
\newcommand{\M}{\overline{\mc{M}}}
\DeclareMathOperator{\SP}{\mc{SP}}
\DeclareMathOperator{\OP}{\mc{OP}}

\DeclareMathOperator{\Cl}{\mc{C}\ell}


\DeclareMathOperator{\ev}{ev}

\DeclareMathOperator{\sh}{sh}
\newcommand{\Aut}{\mathord{\mathrm{Aut}}}
\newcommand{\Strlng}[3]{\genfrac\{\}{0pt}{}{#1}{#2}_{#3}}		

\newcommand{\vir}{{\rm vir}}
\newcommand{\loc}{{\rm loc}}

\newcommand{\sGWconn}{\mb{G}^{\circ}}
\newcommand{\sGWdisc}{\mb{G}^{\bullet}}
\newcommand{\dHconn}{\mb{H}^{\circ}}
\newcommand{\dHdisc}{\mb{H}^{\bullet}}

\newcommand{\del}{\partial}				

\newcommand{\Id}{{\mathrm{Id}}}

\newcommand{\pt}{{\rm pt}}
\newcommand{\val}{{\rm val}}
\DeclareMathOperator{\cont}{cont}


\theoremstyle{plain}
\newtheorem{theorem}{Theorem}[section]
\newtheorem{proposition}[theorem]{Proposition}
\newtheorem{lemma}[theorem]{Lemma}
\newtheorem{corollary}[theorem]{Corollary}

\theoremstyle{definition}
\newtheorem{definition}[theorem]{Definition}
\theoremstyle{remark}
\newtheorem{remark}[theorem]{Remark}

\theoremstyle{plain}
\newtheorem{introthm}{Theorem}

\newtheorem{introconj}[introthm]{Conjecture}

\title{
	The spin Gromov--Witten/Hurwitz correspondence for $\mathbb{P}^1$
}
\author[A.~Giacchetto]{A.~Giacchetto}
\address[A.~G.]{ 
	Universit\'e Paris-Saclay, CNRS, CEA, Institut de Physique Th\'eorique, 91191 Gif-sur-Yvette, France %
}
\curraddr{ 
	ETH Zürich, Departement Mathematik, 8044 Zürich, Switzerland %
}
\email{alessandro.giacchetto@math.ethz.ch}
\author[R.~Kramer]{R.~Kramer}
\address[R.~K.]{
	University of Alberta, Edmonton, AB, T6G 2G1, Canada
	}
\curraddr{
	Università degli Studi di Milano-Bicocca, Dipartimento di Matematica e Applicazioni, 20125 Milano, Italy
	}
\email{reinier.kramer@unimib.it}
\author[D.~Lewa{\'{n}}ski]{D.~Lewa{\'{n}}ski}
\address[D.~L.]{
	Université de Genève, Section de Mathématiques, rue de Conseil-Général 7, 1206 Genève, Switzerland
}
\curraddr{
	Università degli Studi di Trieste, Dipartimento di Matematica, Informatica e Geoscienze, 34127 Trieste, Italy
}
\email{danilo.lewanski@units.it}
\author[A.~Sauvaget]{A.~Sauvaget}
\address[A.~S.]{
	Laboratoire de Mathématiques AGM (UMR 8088), Université de Cergy-Pontoise, 2 av. Adolphe Chauvin, 95302, Cergy-Pontoise, France
}
\email{adrien.sauvaget@math.cnrs.fr}
\subjclass[2020]{Primary 14N35, 14N10; Secondary 14H70, 37K10}

\begin{document}

\begin{abstract}
	We study the spin Gromov--Witten (GW) theory of $\mathbb{P}^1$. Using the standard torus action on $\mathbb{P}^1$, we prove that the associated equivariant potential can be expressed by means of operator formalism and satisfies the 2-BKP hierarchy. As a consequence of this result, we prove the spin analogue of the GW/Hurwitz correspondence of Okounkov--Pandharipande for $\mathbb{P}^1$, which was conjectured by J.~Lee. Finally, we prove that this correspondence for a general target spin curve follows from a conjectural degeneration formula for spin GW invariants that holds in virtual dimension 0.
\end{abstract}

\maketitle
\vspace{-.5cm}
\tableofcontents

\section{Introduction}
\label{sec:intro}

\subsection{Gromov--Witten theory of spin curves}

The Gromov--Witten (GW) theory of K\"{a}hler surfaces has seen significant development over the last twenty years. Initially focused on rational and ruled surfaces of geometric genus $p_g = 0$, it later extended to surfaces with positive $p_g$, a class that includes Calabi--Yau surfaces, most elliptic surfaces and most surfaces of general type \cite{IP04, LP07, KL07, KL11a, KL11b, Lee13}.

Let $X$ be such an algebraic surface with a homology class $\beta \in H_2(X,\Z)$, and a holomorphic $2$-form $\eta$. If the zero locus $B$ of $\eta$ is smooth and connected, then $(B,N_{B/X})$ is a {\em spin curve}, i.e. a pair consisting of a curve and a line bundle $\vartheta$ such that $\vartheta^2 \cong \omega_B$ (a theta characteristic). One of the striking ideas that emerged is the reduction of the GW theory of surfaces of positive geometric genus to the GW theory of a spin curve associated to it via the so-called localisation by cosections that we now recall (see \cite{KL07, LP07} for surfaces with smooth canonical divisor, and \cite[conjecture~1.2]{KL11b} for a lift of that assumption).

The 2-form $\eta$ induces a morphism from the obstruction sheaf of the moduli of stable maps to $X$ to its structure sheaf (a cosection) \cite{KL13}:
\begin{equation}
	\sigma_{\eta} \colon
	\mc{O}b_{\M_{g,n}(X, \beta)} \longrightarrow \mc{O}_{\M_{g,n}(X, \beta)} .
\end{equation}
The locus $Z(\sigma_{\eta})$ where $\sigma_{\eta}$ is not surjective is the locus of stable maps to $X$ whose image lies in the degeneracy locus of $\eta$. This locus is empty if $\beta$ is not a multiple of the canonical class. Applying the general theory of localisation by cosections, Kiem and Li showed that the virtual fundamental cycle of $\M_{g,n}(X, \beta)$ can be represented as a cycle supported on $Z(\sigma_{\eta})$. In other words, they constructed a \textit{localised} virtual cycle on the locus $Z(\sigma_{\eta})$ such that
\begin{equation}
	\iota_* \bigl[Z(\sigma_{\eta})\bigr]^{\textup{loc}, \eta} = \bigl[ \M_{g,n}(X,\beta) \bigr]^{\textup{vir}} ,
\end{equation}
where $\iota \colon Z(\sigma_{\eta}) \hookrightarrow \M_{g,n}(X,\beta)$ is the embedding morphism. In particular, the GW invariants of $X$ vanish unless $\beta = d \cdot K_X$. This approach is an algebro-geometric analogue of what was first discovered by Lee and Parker in symplectic geometry \cite{LP07}. 

If the vanishing locus of $\eta$ is smooth and $\beta = d \cdot K_X$, then we have the canonical identification:
\begin{equation}
	Z(\sigma_{\eta}) \simeq \M_{g,n}(B, d) ,
\end{equation}
and we write $\bigl[\M_{g,n}(B, d)\bigr]^{\textup{loc}, \vartheta} = \bigl[Z(\sigma_{\eta})\bigr]^{\textup{loc}, \eta}$, where $\vartheta = N_{B/X}$. If $\M_{g,n}(X, d[B])$ is proper, then the GW theory of $X$ can be reduced to the localised GW theory of $B$ in the following way:
\begin{equation}\label{eqn:reduction}
	\int_{[\M_{g,n}(X,d[B])]^{\vir}} \prod_{i=1}^n \psi_i^{k_i} \ev_i^{\ast}({\gamma_i})
	=
	\int_{[\M_{g,n}(B,d)]^{\loc,\vartheta}} \prod_{i=1}^n \psi_i^{k_i} \ev_i^{\ast}(\gamma_i \cdot [B]) .
\end{equation}
Here, each $k_i$ is a non-negative integer and the $\gamma_i$ are classes in $H^*(X,\Z)$ (see~\cite{KL13}). The cycle $[\M_{g,n}(B,d)]^{\loc,\vartheta}$ only depends on the spin curve, therefore we refer to the invariants in the right-hand side of~\cref{eqn:reduction} as the {\em spin GW invariants} of $(B,\vartheta)$. The deformation invariance proved in~\cite{KL13} implies that these invariants only depend on the genus of $B$ and the {\em parity} (or Arf invariant) of $\vartheta$, i.e. the parity of $h^0(B,\vartheta)$.

The structure of these invariants has been a topic of research since the result of Lee--Parker \cite{LP07}, and first conjectures were made by Maulik--Pandharipande \cite{MP08}. A recurring idea since these pioneering works, is that they should have a similar structure to the classical GW invariants of curves as described by Okounkov--Pandharipande in their trilogy \cite{OP06a,OP06b,OP06c}. The most concrete proposal in this direction is Lee's conjecture, which states that spin ({\em stationary}) GW invariants may be recovered by so-called spin Hurwitz numbers (see~\cite{Lee20} and~\cref{conj:full:spin:GW:H} below). However, several steps in the general program of Okounkov--Pandharipande are significantly more difficult for spin curves, as we will indicate in the rest of this introduction.

\subsection{Equivariant spin Gromov--Witten theory of \texorpdfstring{$\P^1$}{the Riemann sphere}}

In general, one cannot relate the localised and the classical virtual cycles. However, for the special case $(B,\vartheta) = (\P^1,\mc{O}(-1))$, the following relation holds in $A_{g-1+d+n}(\M_{g,n}(\P^1,d))$ (see~\cite[corollary 2.4]{KL11a}):
\begin{equation}\label{eqn:locsplit}
	\bigl[
		\M_{g,n}(\P^1,d)
	\bigr]^{\loc,\mc{O}(-1)}
	=
	c_{g-1+d}\left( R^1\pi_{\ast} f^{\ast}\mc{O}(-1) \right)
	\cdot
	\bigl[
		\M_{g,n}(\P^1,d)
	\bigr]^\vir ,
\end{equation}
where $\pi \colon \overline{\mc{C}}_{g,n}(\P^1,d) \to \M_{g,n}(\P^1,d)$ is the universal curve and $f \colon \overline{\mc{C}}_{g,n}(\P^1,d) \to \P^1$ is the universal morphism. We exploit this relation and the standard torus action on $\P^1$, to compute the spin GW theory of $(\P^1,\mc{O}(-1))$ using the localisation formula of Graber--Pandharipande \cite{GP99}. We consider the following equivariant integrals of descendent classes of $(\P^1,\mc{O}(-1))$:
\begin{equation}
	\Braket{ \tau_{k_1}(\gamma_1) \cdots \tau_{k_n}(\gamma_n) }_{g,d}^{\P^1,\mc{O}(-1)}
	\coloneqq
	\int_{[\M_{g,n}(\P^1,d)]^{\loc,\mc{O}(-1)}} \prod_{i=1}^n \psi_i^{k_i} \ev_i^{\ast}(\gamma_i)
\end{equation}
with $\gamma_i \in H^{*}_{\C^*}(\P^1)$. Define the equivariant tau-function by:
\begin{equation}\label{def:EquivTauFn}
	\tau(x,x^{\star};u,q)
	\coloneqq
	\sum_{g,d} (-8u)^{g-1} (-4q)^{d}
	\Braket{
		\exp \left( 2 \sum_{k \ge 0} \bigl( x_k \tau_k(\bm{0}) + x_k^{\star} \tau_k(\bm{\infty}) \bigr) \right)
	}_{g,d}^{\P^1, \mc{O}(-1)}
\end{equation}
where $\bm{0}$ and $\bm{\infty}$ are the the equivariant Poincaré duals of $0$ and $\infty$. Our main result is an expression of this function in the operator formalism.

\begin{introthm} \label{thm:sGW:as:vev:intro}
	The equivariant tau-function of $(\P^1,\mc{O}(-1))$ can be represented in the neutral fermion Fock space as a single vacuum expectation value:
	\begin{equation}
		\tau(x,x^{\star};u,q)
		=
		\Braket{
			e^{\sum_i x_i \ms{B}_i }
			e^{\alpha^B_1} \Bigl( \frac{q}{u} \Bigr)^H e^{\alpha^B_{-1}}
			e^{\sum_j x_j^{\star} \ms{B}^\star_j }
		} .
	\end{equation}
	The formalism of the neutral fermion Fock space will be recalled in~\cref{subsec:fock}, and the definition of the operators involved in this theorem are defined in \cref{eqn:B:boson} and \cref{eqn:straightB}. 
\end{introthm}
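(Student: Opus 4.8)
The plan is to follow the localisation strategy of Okounkov–Pandharipande, adapted to the spin setting via the cosection-localised cycle. The starting point is the splitting formula \eqref{eqn:locsplit}, which expresses the localised cycle as an Euler-class insertion $c_{g-1+d}(R^1\pi_* f^*\mc{O}(-1))$ against the ordinary virtual cycle on $\M_{g,n}(\P^1,d)$. Since all the geometry now lives on the ordinary moduli space of stable maps to $\P^1$, I would apply the Graber–Pandharipande virtual localisation formula \cite{GP99} with respect to the standard $\C^*$-action on $\P^1$. The fixed loci are indexed by the usual bipartite graphs: vertices over $0$ and $\infty$ carrying genus and stable-map data, edges recording the multiplicities of the degenerate covers of $\P^1$. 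The first concrete task is to write down the contribution of each fixed locus, i.e. to compute the equivariant Euler class of the virtual normal bundle \emph{together with} the extra factor coming from $c_{g-1+d}(R^1\pi_* f^*\mc{O}(-1))$, which is the feature distinguishing the spin theory from the classical one.

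\emph{Second}, I would organise these vertex and edge contributions into the neutral-fermion/BKP operator formalism. The key is to recognise that, after summing over the genus distributed on each vertex and the edge weights, the vertex contributions assemble into the bosonised operators $\ms{B}_i$ and $\ms{B}^\star_i$ of \eqref{eqn:B:boson}, the edge contributions over $0$ and $\infty$ produce the operators $e^{\alpha^B_1}$ and $e^{\alpha^B_{-1}}$ of \eqref{eqn:straightB}, and the degree-counting across the tube joining $0$ to $\infty$ produces the propagator $(q/u)^H$. In the classical OP story these are Schur-function/vertex-operator identities in the \emph{charged} fermion Fock space; here the halved (or "spin") character of the invariants — reflected in the prefactors $(-8u)^{g-1}(-4q)^d$ and the factor of $2$ in the exponential of \eqref{def:EquivTauFn} — is exactly what forces the \emph{neutral} fermion Fock space and the $2$-BKP rather than the $2$-Toda hierarchy.

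\emph{The main obstacle} I anticipate is the precise matching of the edge (Hodge-integral) contributions with the neutral-fermion one-point functions that define $\alpha^B_{\pm1}$. In the classical case this is the Okounkov–Pandharipande "$1$-point" computation, resolved via the ELSV-type evaluation and the $\qoppa$-function; in the spin case the extra Euler factor $c_{g-1+d}(R^1\pi_* f^*\mc{O}(-1))$ modifies the relevant Hodge integrals, and one must show that the resulting generating series is precisely the matrix element of the straightened boson operator $\alpha^B_{\pm1}$ in the neutral fermion representation. I would handle this by a careful comparison of the two generating functions term by term in $u$ and $q$, using the explicit genus-expansion of the fixed-point contributions together with the known action of $\ms{B}_i$, $H$, and $\alpha^B_{\pm1}$ on the neutral fermion vacuum, as recalled in \cref{subsec:fock}. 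Once the vertex, edge, and propagator factors are each identified with their operator counterparts, the localisation sum reassembles into the single vacuum expectation value, completing the proof.
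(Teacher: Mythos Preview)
Your high-level strategy --- virtual localisation followed by reassembly in the neutral fermion Fock space --- matches the paper's. However, the proposal misidentifies which localisation pieces correspond to which operators, and as a consequence misses the genuine technical work. (Incidentally, your equation references are swapped: \eqref{eqn:B:boson} defines $\alpha^B_m$, while \eqref{eqn:straightB} defines $\ms{B}$, $\ms{B}^\star$.)

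First, a correction of the dictionary. In the paper the \emph{vertex} contributions over $0$ (resp.\ $\infty$) are integrals of double Hodge classes $\Lambda(2)\Lambda(-1)$ against $\psi$-classes on $\M_{g(v),n(v)}$; these are what eventually become the operators $\ms{B}(z_i)$ (resp.\ $\ms{B}(w_j)^\star$). The operators $e^{\alpha^B_{\pm 1}}$ do not arise directly from edge contributions. Rather, the localisation computation (\cref{thm:GW:localised:to:H}) expresses $\sGWdisc_d$ as a sum over an odd partition $\mu\in\OP_d$ (the edge multiplicities) of a \emph{product of two} double-Hodge generating series, one for each fixed point. After each factor is rewritten as a vacuum expectation, the $\mu$-sum is collapsed via the decomposition of the identity $\Id=\sum_d\sum_{\mu\in\OP_d}\tfrac{2^{\ell(\mu)}}{\mf{z}_\mu}\,|\alpha^B_{-\mu}\rangle\langle\alpha^B_\mu|$ (\cref{lem:vev:identity}); only then do $e^{\alpha^B_{\pm 1}}$ appear, via the conjugation formula $\mc{B}(\mu,u\mu)\propto e^{\alpha^B_1}e^{u\mc{F}^B_3/3}\alpha^B_{-\mu}e^{-u\mc{F}^B_3/3}e^{-\alpha^B_1}$. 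Your plan never mentions this quadratic-to-single-VEV step, and without it one cannot obtain a single vacuum expectation value.

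Second, the main obstacle is not the one you name. The bridge from the double Hodge vertex integrals to the Fock space is the spin ELSV formula (\cref{thm:spin:ELSV}), which identifies them with $3$-completed-cycle spin Hurwitz numbers --- but only when the arguments are odd positive integers $\mu_i$. This yields $\mc{B}(\mu,u\mu)$ as above for $\mu\in\Z^{\textup{odd}}_+$. The real technical content (all of \cref{sec:dH:operator:form}) is to \emph{extend} $\mc{B}(z,uz)$ to complex $z$, prove the commutation relation $[\mc{B}(z,uz),\mc{B}(w,uw)]=u\,zw\,\delta(z,-w)$ via a dressing-operator ODE in the style of \cite{OOP20}, deduce that the correlators are rational, and then conclude by Zariski density that they agree with the double Hodge generating series for all $z$. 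This analytic-continuation/commutator step is absent from your plan, and ``term by term in $u$ and $q$'' comparison will not substitute for it: the $\ms{B}$-operators at non-integral arguments are not a priori conjugates of anything, so one needs an independent argument for regularity of their correlators.
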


The classes $\bm{0}$ and $\bm{\infty}$ form a basis of the equivariant cohomology of $\P^1$ as a module over the equivariant cohomology of the point. Therefore, after taking the non-equivariant limit of $\tau$, this theorem allows us to compute the full spin GW theory of $\P^1$. 

\begin{remark}
	The spin GW theory of $\P^1$ plays a distinguished role in the GW theory of surfaces. Indeed, if $\tilde{X} \to X$ is a blow-up at a smooth point of $X$, then the spin GW invariants of $\P^1$ are the classical GW invariants of $\tilde{X}$ with the curve class given by a multiple of the exceptional divisor. Therefore, \cref{thm:sGW:as:vev:intro} provides the first case of an all-genera expression of a GW potential of a target of dimension greater than $1$. 
\end{remark}

\subsection{Spin GW/H correspondence}
\label{subsec:spin:GW:H:intro}

The proof of~\cref{thm:sGW:as:vev:intro} involves an alternative enumerative theory associated to a spin curve. {\em Spin Hurwitz numbers} enumerate branched covers of a base Riemann surface endowed with a spin structure. Each cover is weighted by a sign determined by its parity \cite{EOP08}. More precisely, if $f \colon C \to B $ is a branched cover with odd ramifications over a spin curve $(B,\vartheta)$, and therefore even ramification divisor $R_f$, then
\begin{equation}
	N_{f,\vartheta} \coloneqq f^{\ast}\vartheta \otimes \mc{O}(\tfrac{1}{2} R_f)
\end{equation}
is a well-defined spin structure on $C$. Spin Hurwitz numbers are then defined as
\begin{equation}
	H_d(B,\vartheta ; \mu^1, \dotsc, \mu^n )
	\coloneqq
	\sum_{[f \colon C \to B]} \frac{(-1)^{p(N_{f,\vartheta})}}{|\Aut(f)|} ,
	\qquad 
	p(N_{f,\vartheta}) \equiv h^0(C, N_{f,\vartheta}) \pmod{2} ,
\end{equation}
where the ramification locus of $f$ is given by the ramification profiles $\mu^i$---odd partitions of the degree $d$---over $n$ points fixed on the base. 
Like spin GW invariants, these numbers only depend on the genus of $B$ and the parity of $ \theta$ (see e.g. \cite{Gun16}).

Inspired by the work of Okounkov and Pandharipande \cite{OP06a, OP06b}, we consider the (spin analogue of the) $k$-th {\em completed cycle} \cite{Lee20,MMN20} as the following linear combination of odd partitions:
\begin{equation}
	\overline{c}_{k}= \sum_{\mu \textup{ odd}} \kappa_{k,\mu} \cdot \mu ,
\end{equation}
where the coefficients $\kappa_{k,\mu}$ are given in terms of hyperbolic generating series (see \cref{subsec:spin:HNs}). 

\begin{introconj}[{Spin Gromov--Witten/Hurwitz correspondence,~\cite{Lee20}}] \label{conj:full:spin:GW:H}
	For all $(B,\vartheta)$, $g,d$ and $k_1,\ldots,k_n$ such that $\sum_i k_i = g-1+d(1-g(B))+n$, we have:\footnote{Due to different conventions for the map $ \C \{ \mc{OP} \} \to \Gamma $ in \cite[equation (3.20)]{GKL21} and \cite[section 5]{Lee20}, the factors $ 2^{-k_j}$ of \cite[equation (5.1)]{Lee20} do not appear in this formula.}
	\begin{equation} \label{eqn:full:spin:GW:H}
		\Braket{ \tau_{k_1} \cdots \tau_{k_n} }_{g,d}^{B,\vartheta}
		=
		H_d\left(B,\vartheta; \frac{(-1)^{k_1} k_1! }{(2k_1)!} \overline{c}_{2k_1+1},\ldots, \frac{(-1)^{k_n} k_n! }{(2k_n)!} \overline{c}_{2k_n+1}\right),
	\end{equation}
	where the function $H_d$ is extended by linearity, and the left-hand side stands for the {\em stationary spin GW invariants} defined as
	\begin{equation}
		\Braket{ \tau_{k_1} \cdots \tau_{k_n} }_{g,d}^{B,\vartheta}
		\coloneqq
		\int_{[\M_{g,n}(B,d)]^{\loc,\theta}} \prod_{i=1}^n \psi_i^{k_i} \ev_i^{\ast}(\omega) ,
	\end{equation}
	where $\omega \in H^2(\P^1,\Z)$ is the Poincar\'e-dual class of a point. 
\end{introconj}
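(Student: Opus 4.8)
The plan is to prove the correspondence first in the base case $(B,\vartheta)=(\P^1,\mc{O}(-1))$ and then to transport it to arbitrary spin curves by degeneration. For $\P^1$ the strategy is to realise both sides of \cref{eqn:full:spin:GW:H} as vacuum expectation values in the neutral fermion Fock space and to match them. The left-hand side is already under control: \cref{thm:sGW:as:vev:intro} expresses the full stationary spin GW generating function of $(\P^1,\mc{O}(-1))$ as the single vev $\tau(x,x^\star;u,q)=\Braket{e^{\sum_i x_i\ms{B}_i}\,e^{\alpha^B_1}\,(q/u)^H\,e^{\alpha^B_{-1}}\,e^{\sum_j x_j^\star\ms{B}^\star_j}}$, so that a descendent insertion $\tau_k(\bm 0)$ (resp.\ $\tau_k(\bm\infty)$) is extracted from the variable $x_k$ (resp.\ $x_k^\star$) and becomes the bosonic operator $\ms{B}_k$ (resp.\ $\ms{B}^\star_k$) on the corresponding boundary of the vev.

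First I would produce the parallel operator expression on the spin Hurwitz side. Spin Hurwitz numbers of $\P^1$ with completed-cycle insertions admit a standard neutral-fermion formalism: the two special fibres over $0$ and $\infty$ contribute the operators $e^{\alpha^B_{\pm 1}}$, the degree is tracked by $(q/u)^H$, and an insertion of the completed cycle $\overline{c}_{2k+1}$ is realised by a bosonic operator acting on the Fock vacuum. Assembling these pieces gives a vev of exactly the same shape as the right-hand side of \cref{thm:sGW:as:vev:intro}. The correspondence for $\P^1$ then reduces to the purely operator-theoretic identity that $\ms{B}_k$ coincides, up to the scalar $\tfrac{(-1)^k k!}{(2k)!}$ of \cref{eqn:full:spin:GW:H}, with the operator implementing $\overline{c}_{2k+1}$; here the combinatorial normalisation $\kappa_{k,\mu}$ and the hyperbolic generating series of \cref{subsec:spin:HNs} enter. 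Taking the non-equivariant limit of $\tau$ and restricting to the stationary sector (insertions of the point class $\omega$) then yields \cref{eqn:full:spin:GW:H} for $(\P^1,\mc{O}(-1))$.

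The passage to a general spin curve $(B,\vartheta)$ is where the statement remains conjectural. The idea is to degenerate $B$ into a nodal curve built from copies of $\P^1$, and to apply a degeneration/gluing formula writing the spin GW invariants of $(B,\vartheta)$ as a sum over intermediate ramification profiles of glued $\P^1$-contributions, the gluing being governed by the TQFT-type structure of the Fock space. The identical gluing holds tautologically for spin Hurwitz numbers. Granting a degeneration formula for spin GW invariants in virtual dimension $0$, the $\P^1$ case together with the matching of gluing rules forces \cref{eqn:full:spin:GW:H} for all $(B,\vartheta)$.

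I expect the principal difficulty to be twofold. On the $\P^1$ level, the delicate point is to show that the operators $\ms{B}_k$ arising from the cosection-localised GW computation of \cref{thm:sGW:as:vev:intro} are exactly the completed-cycle operators with the correct normalisation; this requires a careful analysis of the hyperbolic series defining $\kappa_{k,\mu}$ and of the non-equivariant limit that isolates the stationary sector. On the global level, the essential obstacle is the degeneration formula itself, since cosection localisation does not obviously commute with degeneration; this is precisely why the theorem can be established unconditionally only for $\P^1$.
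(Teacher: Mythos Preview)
Your treatment of the $\P^1$ case is essentially the paper's approach: both sides of \cref{eqn:full:spin:GW:H} are realised as vacuum expectation values, and one compares them after taking the non-equivariant limit of \cref{thm:sGW:as:vev:intro}. In the paper this comparison is made concrete by computing $\ms{B}(z)|_{(t,u)=(0,1)}$ explicitly (\cref{cor:GW:H:corr}) and matching the result with the known vev expression for spin Hurwitz numbers with completed cycles (\cref{MixedCompHNAsVEV}); the operator identification you describe is exactly what emerges from that limit.

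For the passage to a general spin curve, however, your outline diverges from the paper's argument. You propose to degenerate $B$ into copies of $\P^1$ and glue via a TQFT-type rule. The paper takes a different, more algebraic route. It assumes \cref{conj:degen:sGW}, which posits universal coefficients $\widetilde{\kappa}_{k,\mu}$ expressing the stationary spin GW invariants of \emph{every} spin curve in terms of spin Hurwitz numbers. These coefficients assemble into elements $\widetilde{c}_\mu\in\C\{\OP\}$, and property~(1) of \cref{conj:degen:sGW} makes the transition matrix from $\overline{c}_\mu$ to $\widetilde{c}_\mu$ unitriangular. Applying property~(2) to $(\P^1,\mc{O}(-1))$ and invoking the already-proved $\P^1$ correspondence gives $q_\xi(\widetilde{c}_\mu,\widetilde{c}_\nu)=q_\xi(\overline{c}_\mu,\overline{c}_\nu)$ for the positive-definite quadratic form $q_\xi(\phi,\psi)=\sum_{\lambda\in\SP}\xi^{|\lambda|}2^{-p(\lambda)}\dim(V^\lambda)\phi(\lambda)\psi(\lambda)$. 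A unitriangular matrix which is also orthogonal for a definite form is the identity, hence $\widetilde{c}_\mu=\overline{c}_\mu$, and property~(2) for general $(B,\vartheta)$ then \emph{is} \cref{eqn:full:spin:GW:H}.

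Your degeneration-and-glue picture is morally related, but as stated it has a gap: a naive degeneration of $B$ into $\P^1$'s produces relative invariants with arbitrary (including even) ramification at the nodes, which lie outside the spin Hurwitz theory and outside the $\P^1$ statement you have. The precise content of \cref{conj:degen:sGW} is exactly that the even-profile contributions vanish, and the paper exploits this in the algebraic form above rather than through an explicit gluing argument.
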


This conjecture generalises conjectures of Maulik--Pandharipande which have been checked for $n = 0$ and for $d = 1, 2$ (see e.g.~\cite{MP08,LP07,KL11a, KL11b}). By taking the non-equivariant limit of~\cref{thm:sGW:as:vev:intro} we obtain the spin Gromov--Witten/Hurwitz (GW/H for short) correspondence for $\P^1$.

\begin{introthm}\label{thm:spin:GW:H:P1:intro}
	\Cref{conj:full:spin:GW:H} holds for $(B,\vartheta) = (\P^1,\mc{O}(-1))$.
\end{introthm}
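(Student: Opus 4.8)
The plan is to read~\cref{thm:sGW:as:vev:intro} as already containing the spin GW/H correspondence in operator form, and to extract~\cref{conj:full:spin:GW:H} for $(\P^1,\mc O(-1))$ by recognising its right-hand vacuum expectation value as a generating series of spin Hurwitz numbers and then passing to the non-equivariant limit.

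First I would set up the Hurwitz side in the same neutral fermion Fock space. Spin Hurwitz numbers of $\P^1$ are governed by the projective (spin) representation theory of the symmetric group, whose irreducibles are indexed by strict partitions; by the associated Frobenius-type character formula, a degree- and genus-counting generating series of spin Hurwitz numbers with prescribed completed-cycle insertions is a single vacuum expectation value built from the operators $\alpha^B_{\pm 1}$ (creating the two caps that glue to the sphere), the grading operator $(q/u)^H$ (recording genus and degree), and one diagonal operator per insertion whose eigenvalue on a strict partition is read off from the hyperbolic generating series defining the coefficients $\kappa_{k,\mu}$ of the completed cycles $\overline c_{k}$. This produces a vev of exactly the shape appearing on the right-hand side of~\cref{thm:sGW:as:vev:intro}.

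The heart of the argument is then the identification of the two families of insertion operators: I would show that, as an operator on the neutral fermion Fock space, the descendant operator $\ms B_k$ of~\cref{thm:sGW:as:vev:intro} coincides with the insertion of $\tfrac{(-1)^{k}k!}{(2k)!}\,\overline c_{2k+1}$. Concretely this means computing the eigenvalue of $\ms B_k$ on an arbitrary strict partition and matching it with the central character of the completed cycle; since the $\ms B_k$ are defined through the very hyperbolic series that enter the $\kappa_{k,\mu}$, the two eigenvalues should agree after a Bernoulli-type resummation and a bookkeeping of the normalising factors $(-8u)^{g-1}(-4q)^{d}$ of~\eqref{def:EquivTauFn} and the factor $2$ in the coupling. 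This is the step I expect to be the main obstacle, as it is where the geometric spin-Hodge-integral input of the localisation meets the representation-theoretic definition of the completed cycles.

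Finally I would take the non-equivariant limit. In $H^*_{\C^*}(\P^1)=\Q[t,H]/\bigl(H(H-t)\bigr)$ the fixed-point classes $\bm{0}$ and $\bm{\infty}$ both tend to the point class $\omega$ as $t\to 0$, so restricting the matched identity of vacuum expectation values to the stationary sector yields the equality of the full disconnected generating series of stationary spin GW invariants of $\P^1$ and of spin Hurwitz numbers with completed-cycle insertions. Taking logarithms to pass to connected invariants and reading off the coefficient of a fixed monomial $x_{k_1}\cdots x_{k_n}$ then gives precisely~\eqref{eqn:full:spin:GW:H} for $(B,\vartheta)=(\P^1,\mc O(-1))$, which is the assertion of~\cref{thm:spin:GW:H:P1:intro}.
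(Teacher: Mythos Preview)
Your overall strategy---express both sides as vacuum expectation values in the neutral fermion Fock space and compare---is exactly what the paper does. However, there is a genuine gap in the order of operations and in one structural claim.

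You propose to show that the equivariant descendant operator $\ms{B}_k$ itself coincides with the diagonal completed-cycle operator, by ``computing the eigenvalue of $\ms{B}_k$ on an arbitrary strict partition''. This cannot work as stated: the $\ms{B}_k$ are \emph{not} diagonal on the basis $\{\ket{\lambda}\}_{\lambda\in\SP}$. From the definition, $\ms{B}(z)$ is a $t$-dependent sum over all energy levels $\mc{E}^B_m(w)$, not just $m=0$, so it has nonzero off-diagonal matrix elements between states of different energy. The equivariant theory genuinely contains more information than the stationary Hurwitz theory (it also encodes insertions of the identity class), so no identification of operators can hold before passing to $t=0$.

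The paper therefore reverses your last two steps: one first takes the non-equivariant limit \emph{inside} the operator $\ms{B}(z)$, and then identifies the result. Concretely, setting $(t,u)=(0,1)$ in the defining series for $\ms{B}(z)$, the Gamma factor $\Gamma(z'+1)/\Gamma(z+k+1)$ collapses to $\Gamma(\tfrac12)/\Gamma(k+1)$, which kills all $k<0$ terms up to finitely many that one checks do not contribute to positive-power coefficients in $z$. The surviving $k\ge 0$ sum then telescopes via $\sum_{k\ge 0}\varsigma(w)^k\mc{E}^B_k(w)/k! = e^{\alpha^B_1}\mc{E}^B_0(w)e^{-\alpha^B_1}$, and Legendre duplication gives
\[
	[z^{k+1}]\,\ms{B}(z)\big|_{(t,u)=(0,1)} = 2^{2k+1}k!\,[z^{2k+1}]\,e^{\alpha^B_1}\mc{E}^B_0(z)e^{-\alpha^B_1}.
\]
Only at this point does one have an honestly diagonal operator (after conjugating back through $e^{\alpha^B_1}$), and the comparison with the Hurwitz vev of \cref{MixedCompHNAsVEV} becomes the matching of two expressions involving $\hat{\mc{E}}^B_0$. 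Your final paragraph (passing from $\bm{0},\bm{\infty}$ to $\omega$, taking logs for connectedness) is then correct. So the missing ingredient is precisely this limit computation of $\ms{B}(z)|_{t=0}$, which replaces your proposed eigenvalue match.
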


The classical GW/H correspondence was established through the combination of the equivariant theory of $\mathbb{P}^1$ and the degeneration formulas (for both GW and Hurwitz theories) that allow to compute the invariants of a target curve in terms of the invariants of a nodal degeneration of this curve (see \cref{fig:deg}). A degeneration formula holds for spin Hurwitz theory and satisfies a peculiar property: the ramification at the node of the degenerated curve should be even (i.e. the contact orders are odd). A similar property was proved to hold for double ramification cycles \cite{CSS21}. Lee--Parker conjectured that the spin GW theory satisfies the same property \cite{LP13}, namely

\begin{introconj} \label{conj:degen:sGW}
	There exist coefficients $\widetilde{\kappa}_{k,\mu} \in \Q$ for all positive odd integers $k$ and odd partitions $\mu$ satisfying the following properties.
	\begin{enumerate}
		\item
		If $|\mu|\geq k$, then $\widetilde{\kappa}_{k,\mu} = 0$ unless $\mu = (k)$, and in this case $\widetilde{\kappa}_{k,(k)} = 1$.

		\item
		For all spin curves $(B,\vartheta)$ and all $g,d,$ and $k_1,\ldots,k_n$ as in \cref{conj:full:spin:GW:H}, we have:
		\begin{equation}
			\Braket{ \tau_{k_1} \cdots \tau_{k_n} }_{g,d}^{\bullet,B,\vartheta}
			=
			\sum_{\substack{\mu^1,\ldots,\mu^n \vdash d \\ \textup{odd}}}
				H_d^\bullet(B,\vartheta;\mu^1,\ldots,\mu^n)
				\prod_{i=1}^n \frac{(-1)^{k_i} k_i! }{(2k_i)!} \widetilde{\kappa}_{2k_i+1,\mu^i} .
		\end{equation}
		(The $\bullet$ superscript on both sides stands for invariants of maps with possibly disconnected domains.)
	\end{enumerate}
\end{introconj}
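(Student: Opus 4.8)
The plan is to construct the coefficients $\widetilde{\kappa}_{k,\mu}$ out of the genus-zero theory and to propagate them to an arbitrary base $(B,\vartheta)$ by a gluing argument, exploiting the fact that the Hurwitz side already carries an exact TQFT structure while the Gromov--Witten side does not yet. Indeed, by the representation-theoretic (Sergeev-algebra, neutral-fermion) description of spin Hurwitz numbers recalled in \cref{subsec:spin:HNs} (see also \cite{Gun16}), the disconnected numbers $H_d^\bullet(B,\vartheta;\mu^1,\dots,\mu^n)$ are computed by a semisimple two-dimensional spin TQFT, so they satisfy an exact degeneration formula: when the base degenerates to a nodal curve and its theta characteristic degenerates with it, $H_d^\bullet$ factors as a sum over node profiles of Hurwitz numbers of the pieces, with the parity sign $(-1)^{p(N_{f,\vartheta})}$ distributed across components. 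The entire content of the conjecture is therefore to endow the stationary spin GW side with the same cut-and-glue structure and to identify the elementary building blocks; the coefficients $\widetilde{\kappa}$ will then be nothing but the change of basis between descendent insertions and ramification profiles recorded by those blocks.

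First I would introduce relative stationary spin GW invariants of $(\P^1,\mc{O}(-1))$ with prescribed odd ramification profiles over $0$ and $\infty$, computed against the cosection-localised cycle $[\,\cdot\,]^{\loc,\vartheta}$. By \cref{thm:sGW:as:vev:intro} their generating series is a matrix element in the neutral fermion Fock space in which the conditions over $0$ and $\infty$ are created by the bosonic modes $\alpha^B_{-m}$ and the descendents are inserted through the operators $\ms{B}_i$. Working in the disconnected theory---the one computed directly by the single vacuum expectation value---I would define $\widetilde{\kappa}_{2k+1,\mu}$ to be the coefficient with which the pure-ramification operator $\prod_j \alpha^B_{-\mu_j}$ appears in the expansion of $\ms{B}_k$. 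Property~(1) should then be a consequence of the energy grading on the Fock space: the operator $\ms{B}_k$ carries a single top-energy term, equal to the single-cycle operator $\alpha^B_{-(2k+1)}$, so after the reindexing $k\mapsto 2k+1$ every profile of size strictly larger than $2k+1$ has vanishing coefficient while $\widetilde{\kappa}_{2k+1,(2k+1)}=1$, which is exactly the normalisation demanded in item~(1).

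The decisive step is property~(2) for an arbitrary base. Here I would degenerate $B$ into a nodal curve whose normalisation is a disjoint union of copies of $\P^1$, track the induced limit of the theta characteristic $\vartheta$ (equivalently, of the cosection $\sigma_{\eta}$), and invoke a degeneration formula for the localised cycle to factor $\Braket{\tau_{k_1}\cdots\tau_{k_n}}_{g,d}^{\bullet,B,\vartheta}$ into a sum over node profiles of products of the relative $\P^1$ invariants constructed above. Since the Hurwitz side degenerates over the \emph{same} index set of node profiles and rests on the \emph{same} genus-zero pieces, matching the two factorisations term by term would yield property~(2) with precisely the coefficients $\widetilde{\kappa}$ read off from the $\P^1$ vertex; the genus-zero base case of this matching is supplied by \cref{thm:spin:GW:H:P1:intro}.

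The main obstacle is exactly this degeneration formula for the cosection-localised virtual cycle. Cosection localisation in the sense of Kiem--Li and Li's degeneration formula are not known to commute: as $B$ degenerates, the $2$-form and hence the cosection $\sigma_{\eta}$ degenerate with it, the nodes of the target interact with the degeneracy locus of $\eta$, and one must show that $[\,\cdot\,]^{\loc,\vartheta}$ splits as a sum of relative localised classes carrying compatible parities. Proving this in the virtual dimension~$0$ regime of the conjecture---so that only the constraint $\sum_i k_i = g-1+d(1-g(B))+n$ is in play---is the crux of the matter and is precisely the input left conjectural here; the remaining ingredients (the genus-zero base case, the Fock-space extraction of $\widetilde{\kappa}$, and property~(1)) are within reach of the methods already developed in this paper.
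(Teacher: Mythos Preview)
The statement under consideration is \cref{conj:degen:sGW}, which in the paper is a \emph{conjecture}, not a theorem: the paper does not prove it. The paper only remarks that it ``may be rephrased as a vanishing property of the degeneration formulas for spin Gromov--Witten invariants'' and attributes the proposal to \cite{LP13}. What the paper \emph{does} prove is \cref{thm:full:spin:GW:H}, namely that \cref{conj:degen:sGW} (taken as input) together with \cref{thm:spin:GW:H:P1:intro} implies \cref{conj:full:spin:GW:H}; and the argument there is an orthogonality argument with respect to the quadratic form $q_\xi$ on $\Gamma$, showing that the transition matrix between the $\widetilde{c}_\mu$ and the $\overline{c}_\mu$ is the identity. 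That is quite different from what you sketch.

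Your proposal is not a proof either, and you say so yourself: the decisive step is a degeneration formula for the cosection-localised cycle $[\,\cdot\,]^{\loc,\vartheta}$, which you correctly identify as open. Once that is granted, the rest of the argument collapses to the $\P^1$ case already handled by \cref{thm:spin:GW:H:P1:intro}. But this is circular relative to the conjecture: the existence of $\widetilde{\kappa}$ satisfying property~(2) for all $(B,\vartheta)$ \emph{is} the statement that such a degeneration formula holds with only odd ramification at the nodes contributing. So what you have written is a (reasonable) reformulation of the conjecture, not a proof of it.

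There is also a technical imprecision in your proposed construction of $\widetilde{\kappa}_{2k+1,\mu}$ as ``the coefficient with which $\prod_j \alpha^B_{-\mu_j}$ appears in the expansion of $\ms{B}_k$''. The operator $\ms{B}_k$ is a single element of $\widehat{\mf{h}}^B \subset \widehat{\mf{b}}_\infty$, a linear combination of the quadratic elements $\hat{E}^B_{i,j}$; it is not naturally a sum of monomials in the $\alpha^B_{-m}$. What one can do instead---and what the paper effectively does in the non-equivariant limit via \cref{cor:GW:H:corr} and \cref{MixedCompHNAsVEV}---is read off the coefficients from the operator $e^{\alpha^B_1}\hat{\mc{E}}^B_0(z)e^{-\alpha^B_1}$ acting on the Fock basis, recovering precisely the completed-cycle coefficients $\kappa_{k,\mu}$. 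For the genus-zero base these coincide with the $\widetilde{\kappa}$ you are after; the open content of the conjecture is that the \emph{same} coefficients work for every $(B,\vartheta)$.
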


\begin{figure}
	\centering
	\begin{tikzpicture}[x=1pt,y=1pt,scale=.65]
		\fill[fill=gray!10] (22.598, 512.208) .. controls (28.604, 515.942) and (34.602, 518.344) .. (40.592, 519.414) -- (18.2347, 516.93) -- cycle;
		\fill[fill=gray!10] (150.598, 512.208) .. controls (156.604, 515.942) and (162.602, 518.344) .. (168.592, 519.414) -- (146.2347, 516.93) -- cycle;
		\fill[fill=gray!10] (143.142, 464.215) .. controls (154.094, 461.478) and (165.047, 458.667) .. (176, 458.667) .. controls (197.333, 458.667) and (218.667, 469.333) .. (229.333, 485.333) .. controls (206.955, 469.549) and (178.2247, 462.5097) .. (143.142, 464.215) -- cycle;
		\draw[thick] (48, 565.3333) .. controls (26.6667, 565.3333) and (5.3333, 554.6667) .. (-5.3268, 538.6667) .. controls (-15.987, 522.6667) and (-15.974, 501.3333) .. (-5.3073, 485.3333) .. controls (5.3593, 469.3333) and (26.6797, 458.6667) .. (48.0065, 458.6667) .. controls (69.3333, 458.6667) and (90.6667, 469.3333) .. (112, 469.3333) .. controls (133.3333, 469.3333) and (154.6667, 458.6667) .. (176, 458.6667) .. controls (197.3333, 458.6667) and (218.6667, 469.3333) .. (229.3333, 485.3333) .. controls (240, 501.3333) and (240, 522.6667) .. (229.3333, 538.6667) .. controls (218.6667, 554.6667) and (197.3333, 565.3333) .. (176, 565.3333) .. controls (154.6667, 565.3333) and (133.3333, 554.6667) .. (112, 554.6667) .. controls (90.6667, 554.6667) and (69.3333, 565.3333) .. cycle;
		\draw[thick] (16, 520) .. controls (32, 496) and (64, 496) .. (80, 520);
		\draw[thick] (22.598, 512.208) .. controls (39.5327, 522.736) and (56.4027, 522.6763) .. (73.208, 512.029);
		\draw[thick] (144, 520) .. controls (160, 496) and (192, 496) .. (208, 520);
		\draw[thick] (150.598, 512.208) .. controls (167.5327, 522.736) and (184.4027, 522.6763) .. (201.208, 512.029);
		
		\node at (192, 552) {\small$\bullet$};
		\node at (32, 544) {\small$\bullet$};
		\node at (20, 575) {$k_1$};
		\node at (200, 575) {$k_n$};
		\node at (112, 575) {$\cdots$};

		\draw[->,thick,decorate,decoration = {snake,pre length=2pt,post length=2pt,}](272, 512) -- (320, 512);
		
		\fill[fill=gray!10] (390.598, 512.208) .. controls (396.604, 515.942) and (402.602, 518.344) .. (408.592, 519.414) -- (386.2347, 516.93) -- cycle;
		\fill[fill=gray!10] (518.598, 512.208) .. controls (524.604, 515.942) and (530.602, 518.344) .. (536.592, 519.414) -- (514.2347, 516.93) -- cycle;
		\fill[fill=gray!10] (511.142, 464.215) .. controls (522.094, 461.478) and (533.047, 458.667) .. (544, 458.667) .. controls (565.333, 458.667) and (586.667, 469.333) .. (597.333, 485.333) .. controls (574.955, 469.549) and (546.2247, 462.5097) .. (511.142, 464.215) -- cycle;
		\fill[fill=gray!10] (548.472, 569.6551) arc[start angle=-131.6664, end angle=-18.5935, radius=29.2196] .. controls (583.5884, 573.5943) and (567.8819, 569.4239) .. (548.472, 569.6551) -- cycle;
		\fill[fill=gray!10] (386.5474, 567.6855) arc[start angle=-121.1715, end angle=-17.9663, radius=26.0366] .. controls (414.125, 574.1695) and (401.3771, 569.4207) .. (386.5474, 567.6855) -- cycle;
		\draw[thick] (416, 565.3333) .. controls (394.6667, 565.3333) and (373.3333, 554.6667) .. (362.6732, 538.6667) .. controls (352.013, 522.6667) and (352.026, 501.3333) .. (362.6927, 485.3333) .. controls (373.3593, 469.3333) and (394.6797, 458.6667) .. (416.0065, 458.6667) .. controls (437.3333, 458.6667) and (458.6667, 469.3333) .. (480, 469.3333) .. controls (501.3333, 469.3333) and (522.6667, 458.6667) .. (544, 458.6667) .. controls (565.3333, 458.6667) and (586.6667, 469.3333) .. (597.3333, 485.3333) .. controls (608, 501.3333) and (608, 522.6667) .. (597.3333, 538.6667) .. controls (586.6667, 554.6667) and (565.3333, 565.3333) .. (544, 565.3333) .. controls (522.6667, 565.3333) and (501.3333, 554.6667) .. (480, 554.6667) .. controls (458.6667, 554.6667) and (437.3333, 565.3333) .. cycle;
		\draw[thick] (384, 520) .. controls (400, 496) and (432, 496) .. (448, 520);
		\draw[thick] (390.598, 512.208) .. controls (407.5327, 522.736) and (424.4027, 522.6763) .. (441.208, 512.029);
		\draw[thick] (512, 520) .. controls (528, 496) and (560, 496) .. (576, 520);
		\draw[thick] (518.598, 512.208) .. controls (535.5327, 522.736) and (552.4027, 522.6763) .. (569.208, 512.029);
		\draw[thick] (400.0244, 589.9635) circle[radius=26.0366];
		\draw[thick] (567.8966, 591.483) circle[radius=29.2196];
		\node at (568, 616) {\small$\bullet$};
		\node at (392, 608) {\small$\bullet$};
		\node at (561.859, 562.894) {\small$\bullet$};
		\node at (404.0586, 564.2414) {\small$\bullet$};
		\node at (371, 615) {$k_1$};
		\node at (575, 630) {$k_n$};
		\node at (480, 600) {$\cdots$};
		\node at (408, 552) {$\mu^1$};
		\node at (550, 550) {$\mu^n$};
	\end{tikzpicture}
	\caption{The degeneration of the spin curve $(B,\vartheta)$ motivating \cref{conj:degen:sGW}.}
	\label{fig:deg}
\end{figure}

Assuming the above degeneration formula, one can deduce the full spin GW/H correspondence from the spin equivariant theory of $\P^1$.

\begin{introthm}\label{thm:full:spin:GW:H}
	\Cref{thm:spin:GW:H:P1:intro} and \cref{conj:degen:sGW} imply \cref{conj:full:spin:GW:H} in full generality.
\end{introthm}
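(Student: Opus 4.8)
The plan is to deduce \cref{conj:full:spin:GW:H} from the two hypotheses by showing that the universal coefficients $\widetilde\kappa_{k,\mu}$ of \cref{conj:degen:sGW} necessarily coincide with the completed-cycle coefficients $\kappa_{k,\mu}$ of \cref{subsec:spin:GW:H:intro}. Granting $\widetilde\kappa=\kappa$, the right-hand side of \cref{conj:degen:sGW}(2) becomes $H^\bullet_d\bigl(B,\vartheta;\prod_i\tfrac{(-1)^{k_i}k_i!}{(2k_i)!}\,\overline{c}_{2k_i+1}\bigr)$ by linearity, i.e.\ the disconnected form of \cref{eqn:full:spin:GW:H}. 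Since the completed-cycle insertions sit at the fixed marked points, both the spin GW invariants and the spin Hurwitz numbers obey the usual exponential (inclusion--exclusion) relation between disconnected and connected counts; taking logarithms then upgrades the disconnected identity to the connected statement of \cref{conj:full:spin:GW:H} for every $(B,\vartheta)$. Thus the whole theorem reduces to the single identity $\widetilde\kappa=\kappa$.

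To establish it I would specialise to $(B,\vartheta)=(\P^1,\mc{O}(-1))$ and work with disconnected invariants. On one hand, \cref{thm:spin:GW:H:P1:intro} exponentiates to the disconnected $\P^1$ correspondence, and expanding each completed cycle linearly gives $\sum_{\mu^i}H^\bullet_d(\P^1,\mc{O}(-1);\mu^1,\dotsc,\mu^n)\prod_i\tfrac{(-1)^{k_i}k_i!}{(2k_i)!}\kappa_{2k_i+1,\mu^i}$. On the other hand, the assumed \cref{conj:degen:sGW} gives the same invariants with $\kappa$ replaced by $\widetilde\kappa$. Writing the disconnected spin Hurwitz numbers of $\P^1$ in their Frobenius/Mednykh form
\[
	H^\bullet_d(\P^1,\mc{O}(-1);\mu^1,\dotsc,\mu^n)=\sum_{\lambda}A_\lambda\prod_{i=1}^n f_{\mu^i}(\lambda),
\]
where $\lambda$ runs over strict partitions of $d$, the weight $A_\lambda>0$ is independent of $n$ and of the insertions, and $f_\mu(\lambda)$ is the spin central character of the class $\mu$ on the irreducible $\lambda$, both expressions collapse to the equality of spectral sums
\[
	\sum_{\lambda}A_\lambda\prod_{i=1}^n P_{k_i}(\lambda)=\sum_{\lambda}A_\lambda\prod_{i=1}^n \widetilde P_{k_i}(\lambda),\qquad P_k=\textstyle\sum_\mu\kappa_{2k+1,\mu}f_\mu,\ \ \widetilde P_k=\sum_\mu\widetilde\kappa_{2k+1,\mu}f_\mu,
\]
valid for all $n$ and all $k_1,\dotsc,k_n$; the genus is produced by the completion of the cycles, so that $A_\lambda$ carries no dependence on the descendent orders.

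It remains to upgrade this equality of mixed moments to the pointwise identity $P_k=\widetilde P_k$. Here I would use that the completed-cycle eigenvalues $P_k(\lambda)$ are the spin shifted-symmetric power sums of $\lambda$, which separate distinct strict partitions; the two equally weighted, finitely supported measures $\lambda\mapsto(P_k(\lambda))_k$ and $\lambda\mapsto(\widetilde P_k(\lambda))_k$ therefore share all mixed moments and hence coincide as weighted point sets. Property~(1) of \cref{conj:degen:sGW} and the analogous leading-term property of the genuine completed cycles show that $P_k$ and $\widetilde P_k$ have the same top part $f_{(2k+1)}$, whose large-$k$ asymptotics order the $\lambda$'s unambiguously and force the matching bijection to be the identity; thus $P_k(\lambda)=\widetilde P_k(\lambda)$ for all $\lambda$ and $k$. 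Finally, since the spin central characters $\{f_\mu\}_\mu$ form an invertible character table for the centre of the Sergeev algebra, the vanishing of $\sum_\mu(\kappa_{2k+1,\mu}-\widetilde\kappa_{2k+1,\mu})f_\mu$ on every $\lambda$ forces $\widetilde\kappa_{2k+1,\mu}=\kappa_{2k+1,\mu}$ for each $\mu$ (taking $d=|\mu|$), which is the desired identity.

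The main obstacle is this spectral step: one must verify that the $\P^1$ invariants are rich enough to pin down \emph{all} the $\widetilde\kappa_{k,\mu}$, i.e.\ that the completed-cycle eigenvalues genuinely distinguish strict partitions and that the spin character table is nondegenerate, and one must track the genus and dimension bookkeeping so that the weight $A_\lambda$ is really independent of the descendent orders. By comparison, the passage from disconnected to connected invariants via the exponential formula and the handling of the normalising constants $\tfrac{(-1)^{k}k!}{(2k)!}$ are routine.
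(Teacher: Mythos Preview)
Your overall strategy agrees with the paper's: both reduce \cref{conj:full:spin:GW:H} to the single identity $\widetilde\kappa=\kappa$, and both extract this from the $\P^1$ case by combining \cref{thm:spin:GW:H:P1:intro} with \cref{conj:degen:sGW}(2). The reduction to $\widetilde\kappa=\kappa$ and the disconnected/connected passage are handled the same way.

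Where you diverge is in the execution of the final step, and here the paper is considerably slicker. You attempt to recover $\widetilde P_k=P_k$ pointwise from equality of all mixed $A_\lambda$-moments, and you correctly flag the spectral step as the obstacle: equality of moments only gives equality of the pushforward measures, i.e.\ a weight-preserving bijection $\sigma$ with $P_k(\lambda)=\widetilde P_k(\sigma(\lambda))$, and pinning down $\sigma=\mathrm{id}$ via large-$k$ asymptotics of $p_{2k+1}(\lambda)$ works but requires a careful lexicographic argument. The paper bypasses this entirely. It packages the products $\overline c_\mu=\prod_i\overline c_{\mu_i}$ and $\widetilde c_\mu=\prod_i\widetilde c_{\mu_i}$ as two bases of the space of supersymmetric functions $\Gamma\cong\C\{\mc{OP}\}$, observes that property~(1) of \cref{conj:degen:sGW} makes the transition matrix between them unitriangular, and then introduces the positive-definite quadratic form
\[
q_\xi(\phi,\psi)=\sum_{\lambda\in\mc{SP}}\xi^{|\lambda|}2^{-p(\lambda)}\dim(V^\lambda)\,\phi(\lambda)\psi(\lambda)
\]
which, via the Burnside/Sergeev character formula, encodes precisely the disconnected $\P^1$ spin Hurwitz numbers. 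Equating the two expressions for the $\P^1$ invariants then reads $q_\xi(\widetilde c_\mu,\widetilde c_\nu)=q_\xi(\overline c_\mu,\overline c_\nu)$ for all $\mu,\nu$, i.e.\ the transition matrix is $q_\xi$-orthogonal. A unitriangular matrix which is orthogonal for a positive-definite form is the identity (an easy induction on the filtration), so $\widetilde c_\mu=\overline c_\mu$ and hence $\widetilde\kappa=\kappa$. This replaces your moment-matching and bijection-rigidity argument by a one-line linear algebra fact, and in particular makes the separation and nondegeneracy issues you raise in your last paragraph evaporate: positivity of $q_\xi$ does all the work.
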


\subsection{Algorithmic computation, closed formulae, and 2-BKP}

As an application, our methods provide an algorithm to explicitly compute any spin GW invariant of the Riemann sphere. The first closed formulae in this direction, for disconnected invariants without degree-zero components (denoted here with a $\emptyset$ subscript), appear in a conjecture of Maulik--Pandharipande \cite{MP08} for degree one and two, proved in \cite{KL11b, Lee13}. From degree three onward, the tail of completed cycles starts playing a role, and the formula is expected to acquire more terms. We show these terms and how to compute them. First, we find it more natural to gather these invariants in generating series:
\begin{equation}
	\Braket{ \tau_{k_1} \cdots \tau_{k_n} }_{\emptyset,g,d}^{\bullet, \P^1, \mc{O}(-1)}
	=
	\frac{2^d}{(d!)^2}
	\prod_{i=1}^n (-2)^{-k_i} k_i! \bigl[ z_i^{2k_i+1} \bigr] \mc{U}_d(z_1, \dots, z_n) .
\end{equation}
The expression of \cref{thm:sGW:as:vev:intro} gives an explicit algorithm for computing the generating series $\mc{U}_d$. The first degrees are given by
\begin{equation}\label{eqn:Ud}
\begin{split}
	\mc{U}_1(z_1, \dots, z_n) &= \frac{1}{2}\prod_{i=1}^n \sinh(z_i) ,
	\\
	\mc{U}_2(z_1, \dots, z_n) &= \frac{1}{2}\prod_{i=1}^n \sinh(2z_i) ,
	\\
	\mc{U}_3(z_1, \dots, z_n) &= \frac{1}{2} \prod_{i = 1}^n \sinh (3z_i) +\frac{1}{4} \prod_{i =1}^n \big(\sinh (2z_i)+ \sinh (z_i) \big) .
\end{split}
\end{equation}
The formulae for $\mc{U}_1$ and $\mc{U}_2$ reprove the Maulik--Pandharipande conjecture as~\cref{eqn:Ud} may be re-written as:
\begin{equation}
\begin{split}
	\Braket{\tau_{k_1} \cdots \tau_{k_n}}^{\bullet, \P^1, \mc{O}(-1)}_{\emptyset,g,1}
	=
	\prod_{i=1}^{n}\frac{k_i!}{(2k_i+1)!}(-2)^{-k_i} ,
	\\
	\Braket{\tau_{k_1} \cdots \tau_{k_n}}^{\bullet, \P^1, \mc{O}(-1)}_{\emptyset,g,2}
	=
	\frac{1}{2}\prod_{i=1}^{n}\frac{2 \cdot k_i!}{(2k_i+1)!}(-2)^{k_i} .
\end{split}
\end{equation}
The formula for $\mc{U}_3$ can be expanded into a new closed-form expression for degree three invariants: 
\begin{equation}
	\Braket{ \tau_{k_1} \cdots \tau_{k_n} }_{\emptyset,g,3}^{\bullet,\P^1, \mc{O}(-1)}
	=
	\frac{1}{9} \prod_{i=1}^n \frac{3 \cdot k_i!}{(2k_i+1)!} \Big( -\frac{9}{2} \Big)^{k_i}
	+ \frac{1}{18} \prod_{i=1}^n \frac{k_i!}{(2k_i+1)!} \big( (-2)^{-k_i} + 2 (-2)^{k_i} \big) .
\end{equation}
With the same methods, a formula for $n=1$ and recursive in the degree $d$ is derived. The recursion can compute high degree values in a short amount of time (it takes less than a second to compute up to $d = 15$ on an ordinary computer). The first generating series are shown in \cref{table:Ud:1pnt}.

\begin{table}
	\centering
	\begin{tabular}{c | l}
		\toprule
		$d$ & $\mc{U}_d(z)$
		\\
		\midrule
		$1$ & $\frac{1}{2} \sh_1$ 
		\\[1ex]
		$2$ & $\frac{1}{2} \sh_2$ 
		\\[1ex]
		$3$ & $\frac{1}{2} \sh_3 + \frac{1}{4} \sh_2 + \frac{1}{4} \sh_1$ 
		\\[1ex]
		$4$ & $\frac{1}{2} \sh_4 + \sh_3 + \sh_1$ 
		\\[1ex]
		$5$ & $\frac{1}{2} \sh_5 + \frac{9}{4} \sh_4 + \sh_3 + \sh_2 + \frac{9}{4} \sh_1$ 
		\\[1ex]
		$6$ & $\frac{1}{2} \sh_6 + 4 \sh_5 + \frac{25}{4} \sh_4 + \frac{1}{2} \sh_3 + \frac{27}{4} \sh_2 + \frac{9}{2} \sh_1$ 
		\\[1ex]
		$7$ & $\frac{1}{2} \sh_7 + \frac{25}{4} \sh_6 + \frac{81}{4} \sh_5 + \frac{99}{8} \sh_4 + \frac{25}{4} \sh_3 + \frac{211}{8} \sh_2 + \frac{99}{8} \sh_1$ 
		\\[1ex]
		$8$ & $\frac{1}{2} \sh_8 + 9 \sh_7 + 49 \sh_6 + 81 \sh_5 + 18 \sh_4 + 67 \sh_3 + 81 \sh_2 + 59 \sh_1$
		\\[1ex]
		\bottomrule
	\end{tabular}
	\caption{The generating series of $1$-point, degree $d$, stationary spin GW invariants of $(\P^1,\mc{O}(-1))$. Here $\sh_d = \sinh(dz)$.}
	\label{table:Ud:1pnt}
\end{table}

One main advantage of \cref{thm:sGW:as:vev:intro}, the expression of equivariant spin GW invariants in the Fock space formalism, is that this formalism is very well-adapted to the theory of integrable hierarchies. Integrable hierarchies are infinite collections of compatible partial differential equations for a single function in infinitely many variables. A famous example is the Korteweg--de Vries (KdV) hierarchy, a special case of the Kadomtsev--Petviashvili (KP) hierarchy, which by the Kontsevich--Witten theorem \cite{Kon92,Wit91} describes intersection theory on $\M_{g,n}$, i.e. GW theory of a point. Okounkov--Pandhari\-pande \cite{OP06b} applied their expression for the equivariant GW theory of $\P^1$ to prove it satisfies another hierarchy, called the 2D Toda lattice hierarchy. This is equivalent to two coupled KP hierarchies, labelled by another discrete lattice parameter.

Our expression in the neutral fermion Fock space allows for a similar result. Because we work with the neutral fermions, we obtain a B-type version of the hierarchy, known as the $2$-BKP hierarchy. As its name suggests, this hierarchy governs two coupled B-type KP hierarchies. The lattice parameter, however, is not available in this context.

\begin{introthm}\label{thm:2BKP}
	After a triangular linear change of variables $\{x_i,x_i^{\star}\} \mapsto \{t_i,s_i\}$, the equivariant tau-function $\tau$ satisfies the $2$-BKP hierarchy.
\end{introthm}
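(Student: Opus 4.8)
The plan is to recognise the right-hand side of \cref{thm:sGW:as:vev:intro} as the canonical fermionic form of a $2$-BKP tau-function, and then to invoke the standard group-theoretic characterisation of that hierarchy. Recall that, in the neutral fermion Fock space, a function of two sets of times is a tau-function of the $2$-BKP hierarchy precisely when it can be written as
\begin{equation}
	\Braket{ \Gamma_+(t) \, g \, \Gamma_-(s) } \, ,
	\qquad
	\Gamma_+(t) = \exp\Bigl( \sum_{m > 0 \textup{ odd}} t_m \, \alpha^B_m \Bigr) \, ,
	\quad
	\Gamma_-(s) = \exp\Bigl( \sum_{m > 0 \textup{ odd}} s_m \, \alpha^B_{-m} \Bigr) \, ,
\end{equation}
where $g$ belongs to the group generated by exponentials of quadratic expressions in the neutral fermions (a completion of $\mathrm{Spin}_\infty$); equivalently, $g \otimes g$ must commute with the Casimir $\sum_{n} (-1)^n \phi_n \otimes \phi_{-n}$, which is the BKP bilinear identity. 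Thus the theorem reduces to two verifications: that the descendent operators of \cref{thm:sGW:as:vev:intro} linearise the odd currents $\alpha^B_{\pm m}$ after a triangular change of variables, and that the middle factor is a group element.

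First I would treat the change of variables. Reading off \cref{eqn:straightB}, one checks that each $\ms{B}_i$ is a finite linear combination of the positive odd currents $\alpha^B_m$ with $0 < m \le 2i+1$, with nonzero coefficient on $\alpha^B_{2i+1}$, and symmetrically that each $\ms{B}^\star_j$ is such a combination of the negative currents $\alpha^B_{-m}$. Substituting these expansions into $e^{\sum_i x_i \ms{B}_i}$ and $e^{\sum_j x_j^\star \ms{B}^\star_j}$ and collecting the coefficient of each current produces linear expressions
\begin{equation}
	\sum_i x_i \, \ms{B}_i = \sum_{m > 0 \textup{ odd}} t_m \, \alpha^B_m \, ,
	\qquad
	\sum_j x_j^\star \, \ms{B}^\star_j = \sum_{m > 0 \textup{ odd}} s_m \, \alpha^B_{-m} \, ,
\end{equation}
in which $t_m$ (resp.\ $s_m$) is a triangular linear function of the $x_i$ (resp.\ the $x_j^\star$) with invertible diagonal. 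This is exactly the triangular change of variables $\{x_i, x_j^\star\} \mapsto \{t_m, s_m\}$ of the statement, and under it the outer factors of \cref{thm:sGW:as:vev:intro} become the vertex operators $\Gamma_+(t)$ and $\Gamma_-(s)$.

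It then remains to show that $g = e^{\alpha^B_1} (q/u)^H e^{\alpha^B_{-1}}$ is a group element. The operators $\alpha^B_{\pm 1}$ of \cref{eqn:B:boson} and the energy operator $H$ are all quadratic in the neutral fermions, hence lie in the centrally extended orthogonal Lie algebra $\mf{go}_\infty$; their exponentials therefore lie in the corresponding group, and since the group is closed under multiplication so is $g$ (the parameters $q$ and $u$ are inert spectators for the hierarchy in the $t,s$ variables). Consequently $g \otimes g$ commutes with $\sum_{n} (-1)^n \phi_n \otimes \phi_{-n}$, the BKP bilinear identity holds for each fixed $(u,q)$, and the boson--fermion dictionary for neutral fermions translates it into the Hirota bilinear equations of the $2$-BKP hierarchy for $\tau$.

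The main obstacle is the first verification, and in particular the claim that the $\ms{B}_i$ are \emph{linear} in the currents: this is what separates a genuine tau-function from an arbitrary generating series, and it is precisely where the combinatorics of the completed cycles enters, since the geometrically natural descendent insertions become linear in the currents only after the completed-cycle repackaging built into \cref{eqn:straightB}. Once linearity is established, the triangularity and the non-vanishing of the diagonal follow from a direct inspection of the explicit coefficients. A secondary point requiring care is to match the conventions for the neutral fermion Fock space of \cref{subsec:fock}---the normalisation of the pairing, the role of the zero mode $\phi_0$, and the precise form of the bilinear identity---to the reference form of the hierarchy, so that two coupled B-type KP systems (rather than a single BKP hierarchy or a $2$-Toda lattice) are correctly identified.
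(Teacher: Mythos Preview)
Your argument has a genuine gap at the very first step: the claim that each $\ms{B}_i$ is a finite linear combination of positive odd currents $\alpha^B_m$ is false. Looking back at \cref{def:B-ops} and \cref{eqn:straightB}, the operator $\ms{B}(z)=\frac{1}{u}\mc{B}(tz,uz)$ is built from the full family $[w^{l}]\,\mc{E}^B_k(w)=\mc{E}^B_{k,l}$, not merely from the constant coefficients $\alpha^B_m=\mc{E}^B_m(0)$. What \cref{eqn:straightB} also records is a \emph{different} operator $\tilde{\ms{B}}(z)$, which \emph{is} linear in the bosons; but $\ms{B}\neq\tilde{\ms{B}}$, and the whole content of \cref{thm:Bdressing} is that they are related by conjugation with the dressing operator $W$: one has $W^{-1}\ms{B}W=\tilde{\ms{B}}$. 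You have effectively confused $\ms{B}$ with $\tilde{\ms{B}}$.

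The paper's proof fixes exactly this: it inserts $W$ and $W^{-1}$ (and their starred versions) around the $\ms{B}$'s, uses \cref{thm:Bdressing} to turn $e^{\sum_i x_i\ms{B}_i}$ into $\Gamma_+(t)$ under a triangular change $\{x_i\}\mapsto\{t_i\}$ determined by the expansion of $\tilde{\ms{B}}_k$ in the $\alpha^B_{2l+1}$, and then uses the upper-unitriangularity of $W$ from \cref{DressingDef} to kill the outermost $W$'s against the vacua. The price is that the middle element is not your $g=e^{\alpha^B_1}(q/u)^H e^{\alpha^B_{-1}}$ but rather
\[
M \;=\; W^{-1}\, e^{\alpha^B_1}\,(q/u)^H\, e^{\alpha^B_{-1}}\,(W^{\star})^{-1}\,,
\]
and one must then check that $W$ (hence $M$) lies in $\widehat{B}_\infty$, which follows from \cref{DressingDef}. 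In summary, the missing ingredient is the dressing operator $W$: without it, the outer factors are not vertex operators $\Gamma_\pm$, and the argument does not get off the ground.
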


For some background on the 2-BKP hierarchy, see \cref{subsec:2-BKP}.

\subsection{Strategy of proof of~\texorpdfstring{\cref{thm:sGW:as:vev:intro}}{the main theorem}}

The spin Hurwitz numbers play a central role in the proof of \cref{thm:sGW:as:vev:intro}, as we will follow a parallel strategy to \cite{OP06b} to compute the equivariant spin GW theory of $\P^1$. In the ordinary (i.e. non-spin) setting, Okounkov and Pandharipande establish the bridge between GW theory and Hurwitz numbers by virtual localisation on the space of stable maps to equivariant $\P^1$. The integrals appearing in the localisation formula match with the right-hand side of the celebrated ELSV formula \cite{ELSV01}:
\begin{equation}
	h_{g;\mu}
	=
	\left( \prod_{i=1}^n \frac{\mu_i^{\mu_i}}{\mu_i!} \right)
	\int_{\M_{g,n}} \frac{\Lambda (-1)}{\prod_{i=1}^n (1-\mu_i \psi_i)}
	.
\end{equation}
The left-hand side of this equation is an ordinary Hurwitz number with one fixed ramification profile $\mu$ and simple ramification elsewhere. From this match, they obtain an expression for GW invariants which is \emph{quadratic} in the Hurwitz numbers. Exploiting the Fock space representation of ordinary Hurwitz numbers, they proceed to the ordinary analogue of \cref{thm:sGW:as:vev:intro}, which in turn establishes the integrability of the equivariant GW theory with respect to the $2$D Toda lattice hierarchy of PDEs and the ordinary GW/H correspondence for $\P^1$ by taking the non-equivariant limit.

To implement this plan in the spin setting, a desired ingredient is an ELSV-type formula for the simplest instance of completed cycles spin Hurwitz numbers, which needs to be compatible with the virtual localisation of the localised fundamental class. The $(r+1)$-completed cycles spin Hurwitz numbers are obtained by imposing that all ramifications except the one over zero are given by completed cycles of the same type:
\begin{equation}
	h^{+,r}_{g;\mu}
	\coloneqq
	\frac{|\Aut{(\mu)}|}{b!} H_d\bigl( \P^1, \mc{O}(-1); \mu, (\bar{c}_{r+1})^b \bigr) ,
\end{equation}
where the Riemann--Hurwitz theorem imposes $rb = 2g-2+ \ell (\mu) + d$.

Spin Hurwitz numbers do not allow even-order ramification index. Hence, simple ramifications must be skipped as base case, and the first natural oddly ramified instance is given by the $3$-completed cycles. The presence of completed cycles for even the simplest case is in contrast to the non-spin case, which features simple ramifications. This complicates the operator formalism considerably, and new strategies must be adopted (especially in \cref{sec:dH:operator:form}) to achieve the desired results. An ELSV-type formula for $3$-completed cycles spin Hurwitz numbers appears in \cite[equation~(9.24)]{GKL21} and features a double Hodge class as integrand\footnote{What appears op. cit. is a complete family of ELSV formulae for $(r+1)$-completed cycles spin Hurwitz number for any even $r$. Here we only recall the one for $r = 2$, which is much simpler than the general one. This family of ELSV formulae is proposed and proved to be equivalent to the Eynard--Orantin topological recursion for the corresponding Hurwitz numbers in \cite{GKL21}. Then, Alexandrov--Shadrin \cite{AS23} proved the topological recursion statement, hence proving the ELSV formulae under discussion for any $r$.}:
\begin{equation}
	h^{+,2}_{g;\mu}
	=
	2^{g-1+n}
	\left( \prod_{i=1}^n \frac{\mu_i^{\frac{\mu_i - 1}{2}}}{\bigl(\frac{\mu_i - 1}{2}\bigr)!} \right)
	\int_{\M_{g,n}} \frac{ \Lambda (2) \Lambda (-1)}{\prod_{i=1}^n (1-\mu_i \psi_i)}
	.
\end{equation}
The virtual localisation analysis of the equivariant spin GW invariants of $\P^1$ with its unique spin structure $\mc{O}(-1))$ leaves some freedom in lifting the equivariant structure to $\mc{O}(-1)$: the weights of the representation at the fibres are only restricted to differ by $1$. We use polynomiality of the localised class to make a (non-standard but symmetric) choice of weights, which shows that the double Hodge class in the above ELSV-type formula is exactly what is needed for the match: one Hodge class arises from the virtual fundamental class, the other from the top Chern class appearing in \cref{eqn:locsplit}.

Using this ELSV-type formula to write the spin GW invariants as a quadratic expression in spin Hurwitz numbers, we can then re-express them as a product of vacuum expectation values in the neutral fermion Fock space. That shape can then be manipulated to obtain a single vacuum expectation value, which reduces to spin Hurwitz numbers in the non-equivariant limit to establish~\cref{thm:spin:GW:H:P1:intro}.

The main technical step in the fermion analysis concerns the extension of the vacuum expectation value of the Hodge integrals to arbitrary complex $\mu_i$, and the calculation of the commutators of the required operators. In the ordinary case, Okounkov--Pandharipande~\cite{OP06a} employed a sophisticated analysis involving hypergeometric functions to obtain these results. We rather follow the alternative route followed by Oblomkov--Okounkov--Pandhari\-pande~\cite{OOP20}, by finding an ODE for a certain dressing operator.

\begin{remark}
	There is a large literature on simple and triple Hodge integrals as they relate to the theory of curves (via the above ELSV formula), and to the local 3-folds theory of curves (see for instance \cite{MV01,LLZ02,BP08}). However, few results are known for double Hodge integrals that play a central role in the present paper, apart from the integral of $\lambda_{g}\lambda_{g-1}$ with product of $\psi$-classes computed in~\cite{FP00}. In particular, Mumford's relation allows to consider simple (resp. double) Hodge integrals as a specialisation of triple (resp. quadruple) Hodge integrals. Thus the integrals studied in the present paper should be studied within the context of local 4-folds (see for instance \cite{KP08,CK20}). It is remarkable that they appeared recently in a seemingly unrelated problem: Blot conjectured that these integrals are used to compute the quantisation of the GW theory of the point by so-called double ramification cycles \cite{Blo22}.
\end{remark}

\subsection{Outline of the paper} 
\label{subsec:outline}
\Cref{sec:background} contains the necessary background, especially the technical definitions of bosonic operators built out of neutral fermions which are required to express spin Hurwitz numbers as vacuum expectations. In \cref{sec:localisation} we perform the virtual localisation of the localised virtual class and match it with the double Hodge ELSV formula for spin Hurwitz numbers. In \cref{sec:dH:operator:form} we extend the vacuum expectation expression as a generating series over the complex numbers. In \cref{sec:sGW:operator:form} we conclude the proof of the spin GW/H correspondence for the Riemann sphere, discuss the spin GW/H correspondence for a general target, and report on the explicit computations presented in the introduction. In \cref{sec:2BKP} we prove string and divisor equations and prove $2$-BKP for the equivariant tau-function.

\addtocontents{toc}{\protect\setcounter{tocdepth}{1}}
\subsection*{Notation} 
\label{subsec:notation}
We use the functions
\begin{equation}\label{eqn:fnctns:notation}
	\varsigma (z) = 2\sinh \left(\frac{z}{2}\right) ,
	\qquad 
	\mc{S}(z) = \frac{\sinh (\frac{z}{2})}{(\frac{z}{2})} ,
	\qquad 
	\qoppa(z) = \frac{\cosh(\frac{z}{2})}{2} .
\end{equation}
We also denote by $\mc{P}$ the set of \emph{partitions}, by $\SP$ the set of \emph{strict partitions} (i.e. all parts are distinct), and by $\OP$ the set of \emph{odd partitions} (i.e. all parts are odd). The sets of partitions of a given integer $d$ carry a subscript $d$. In particular, $ \mc{P}_0 = \mc{OP}_0 = \mc{SP}_0 $ contains the empty partition.

For any complex number $z \in \C$, we set
\begin{equation}
	z' \coloneqq \frac{z-1}{2}.
\end{equation}

For GW invariants, Hurwitz numbers, and moduli spaces, we use a superscript $ \bullet $ to indicate the disconnected kind, and add a subscript $ \emptyset$ to exclude degree zero components.\footnote{Note that several papers on spin GW theory, e.g. \cite{LP07,MP08,Lee13,LP13,Lee20}, use a superscript $ \bullet $ to denote `disconnected without degree zero components'.} For degree $0$, these disconnected counts contain the map or cover from the empty curve to the target. Connected versions either do not carry a superscript or, for clarity, may have a superscript $ \circ$.

\subsection*{Acknowledgments.}
\label{subsec:ack}
The authors would like to thank John Harnad, Jan-Willem van~Ittersum, Felix Janda, Martijn Kool, Junho Lee, Sergej Monavari, Rahul Pandharipande, and Sergey Shadrin for useful discussions, and Lou-Jean Leila Cobigo and Marvin Anas Hahn for pointing out an error in our formula for spin-completed cycles.

R.~K.~has been supported by the Max-Planck-Gesellschaft, by the Natural Sciences and Engineering Research Council of Canada, and by the Pacific Institute for the Mathematical Sciences (PIMS). The research and findings may not reflect those of these institutions. The University of Alberta respectfully acknowledges that we are situated on Treaty 6 territory, traditional lands of First Nations and M\'{e}tis people. A.~G. and D.~L. have been supported by the ERC-SyG project ``Recursive and Exact New Quantum Theory'' (ReNewQuantum) which received funding from the European Research Council (ERC) under the European Union’s Horizon 2020 research and innovation programme under grant agreement No 810573, and by the grant host institutes IHES and IPhT. D.~L. is funded by Swiss National Foundation Ambizione project ``Resurgent topological recursion, enumerative geometry and integrable hierarchies'' under the grant agreement PZ00P2-202123, and by the INdAM group GNSAGA. The authors also thank the University of Geneva, IPhT, IHES, and Universiteit Leiden for providing great working conditions during the collaborations that led to this work.

\addtocontents{toc}{\protect\setcounter{tocdepth}{2}}
\newpage

\section{Neutral fermions and spin Hurwitz numbers}
\label{sec:background}

In this section, we recall some facts about the neutral fermion formalism and its application to spin Hurwitz numbers. This material is mostly adapted from \cite{GKL21}.

\subsection{Neutral fermion Fock space}
\label{subsec:fock}

\begin{definition}\label{def:neutral:fermions}
	Let $W^B$ be the infinite-dimensional complex vector space freely generated by $ \{ \phi_k\}_{k \in \Z} $. Define the bilinear form
	\begin{equation}
		(\phi_k,\phi_l) = \frac{(-1)^k}{2} \delta_{k+l}
	\end{equation}
	and the space of \emph{neutral fermions} as the associated Clifford algebra $\Cl^B \coloneqq \Cl(W^B,(\cdot,\cdot))$. It has a $\Z/2\Z$ grading $\Cl^B_0 \oplus \Cl^B_1$, with $\Cl^B_{p}$ spanned by products of $m$ elements with $m \equiv p \pmod{2}$. Moreover, it has canonical anticommutation relations given by
	\begin{equation}\label{eqn:CAR:B}
		\{\phi_k,\phi_l\} = (-1)^k\delta_{k+l} .
	\end{equation}
\end{definition}

\begin{definition}\label{def:Fock:space:B}
	Consider the subspace $L^B$ of $W^B$ generated by $\phi_k$ for $k < 0$, which is maximal isotropic for $(W^B,(\cdot,\cdot))$. Define the \emph{(fermionic) Fock space of type B} as the unique graded highest-weight left module of $\Cl^B$:
	\begin{equation}
		\mf{F}^B = \Cl^B / \Cl^B \cdot L^B .
	\end{equation}
	We write $\ket{0}$ for the class of $1$, also called the \emph{vacuum state}, and $\ket{1} = \sqrt{2} \phi_0 \ket{0}$. The space $\mf{F}^B$ inherits the $\Z/2\Z$ grading: $\mf{F}^B = \mf{F}^B_0 \oplus \mf{F}^B_1$, and $\ket{p} \in \mf{F}^B_{p}$. Moreover, for $\lambda \in \SP$, define
	\begin{equation}
		\ket{\lambda} = \phi_{\lambda_1} \dotsb \phi_{\lambda_{\ell (\lambda )}} \ket{p(\lambda)} ,
		\qquad \text{where} \qquad p(\lambda) \equiv \ell(\lambda) \pmod{2}
	\end{equation}
	is the \emph{parity} of $\lambda$. In particular, $\{ \ket{\lambda} \mid \lambda \in \SP \}$ form a basis of $\mf{F}^B_0$.

	With the dual construction, (i.e. considering the unique graded highest-weight right module) we define the dual Fock space $\mf{F}^{B,\ast}$ and the covectors $\bra{0}$ and $\bra{1}$. In particular, we have a pairing $\mf{F}^{B,\ast} \times \mf{F}^B \to \C$ denoted by
	\begin{equation}
		\braket{\xi | \omega} = \bigl( \bra{\xi},\ket{\omega} \bigr) .
	\end{equation}
	For any $O \in \Cl^B$ we can define its \emph{vacuum expectation value} $\braket{O}$ as $\braket{0| O | 0}$. Since the (right) action of $\mc{B}$ on the dual Fock space is the adjoint of the (left) action on the Fock space, there is no ambiguity in the notation.
\end{definition}

\begin{lemma}\label{lem:VEV:Fock:basis:B}
	The vacuum expectation values of quadratic expressions in the $\phi$'s are
	\begin{equation}
		\braket{ \phi_k \phi_l } = (-1)^k \delta_{k+l} u[l] ,
		\qquad \text{where} \qquad 
		u[l] =
		\begin{cases}
			1 & \text{if $l > 0$}, \\
			\frac{1}{2} & \text{if $l = 0$}, \\
			0 & \text{if $l < 0$}.
		\end{cases}
	\end{equation}
\end{lemma}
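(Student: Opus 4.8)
The plan is to compute the two-point function directly from the defining data of the Fock space, namely the action of the generators on the vacuum together with the canonical anticommutation relations \eqref{eqn:CAR:B}. First I would record the two annihilation properties that follow from $\mf{F}^B = \Cl^B/\Cl^B \cdot L^B$: since $L^B$ is spanned by the $\phi_k$ with $k<0$, one has $\phi_k\ket{0}=0$ for $k<0$, and dually $\bra{0}\phi_k=0$ for $k>0$; I would also invoke the normalization $\braket{0|0}=1$. With these in hand, the statement reduces to a short case analysis on the sign of the second index $l$.

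If $l<0$, then $\phi_l\ket{0}=0$ immediately gives $\braket{\phi_k\phi_l}=0$, matching $u[l]=0$. If $l>0$, I would anticommute the two generators using \eqref{eqn:CAR:B}, writing $\phi_k\phi_l = (-1)^k\delta_{k+l} - \phi_l\phi_k$; taking the vacuum expectation value, the term $\braket{\phi_l\phi_k}$ vanishes because $\bra{0}\phi_l=0$ for $l>0$, leaving $\braket{\phi_k\phi_l}=(-1)^k\delta_{k+l}=(-1)^k\delta_{k+l}\,u[l]$.

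The only genuinely delicate case is $l=0$. For $k\neq 0$ the anticommutator $\{\phi_k,\phi_0\}=(-1)^k\delta_k$ vanishes, so $\phi_k$ and $\phi_0$ anticommute and one of the two annihilation relations kills the expectation value, in agreement with $\delta_k=0$. For $k=l=0$ one cannot appeal to annihilation, since $\phi_0\ket{0}=\tfrac{1}{\sqrt2}\ket{1}\neq 0$; instead I would use the Clifford relation $\{\phi_0,\phi_0\}=1$, i.e. $\phi_0^2=\tfrac12$, to get $\braket{\phi_0\phi_0}=\tfrac12\braket{0|0}=\tfrac12$, which is exactly $(-1)^0\delta_0\,u[0]$. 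Assembling the cases yields the claimed formula.

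I expect the main (minor) obstacle to be precisely this diagonal term $k=l=0$: it is the unique place where the half-integer value $u[0]=\tfrac12$ is produced, and it must be handled through the Clifford relation $\phi_0^2=\tfrac12$ rather than a vanishing argument. Everything else is bookkeeping of the sign $(-1)^k$ and the support condition $k+l=0$ enforced by $\delta_{k+l}$.
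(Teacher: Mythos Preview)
Your argument is correct and is exactly the standard verification from the Fock space axioms. The paper itself states this lemma without proof, treating it as a basic fact about the neutral fermion Fock space, so there is no alternative approach to compare against; your case analysis on the sign of $l$, together with the Clifford relation $\phi_0^2=\tfrac12$ for the diagonal term, is precisely what one would supply.
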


In the following, we will mainly consider the sector $\mf{F}^B_0$ of the Fock space. To start with, let us recall the action by an infinite Lie algebra, which corresponds to the infinite-dimensional analogue of the orthogonal Lie algebra $\mf{o}_{2n+1}$. The choice of convention for the basis elements is taken from \cite{Gia21}.

\begin{definition}\label{def:b:inf}
	Let $\mf{a}_\infty = \set{ (a_{m,n})_{m,n \in \Z} | a_{m,n} = 0 \text{ for } |m-n| \gg 0 }$ be the bi-infinite general linear Lie algebra with the standard commutator bracket of band matrices. It has a natural basis of elementary matrices $\set{ E_{j,k} = (\delta_{m,j}\delta_{n,k})_{m,n} }_{j,k \in \Z}$. Define the involution $\iota \colon E_{j,k} \mapsto (-1)^{j+k} E_{-k,-j}$ and the Lie subalgebra
	\begin{equation}
		\mf{b}_\infty \coloneqq \bigl\{ g \in \mf{a}_\infty \bigm| \iota (g) = -g\bigr\} .
	\end{equation}
	It has a basis $\set{ B_{j,k} = (-1)^k E_{j,k} - (-1)^{j} E_{-k,-j} }_{j>k}$ and two irreducible representations on $\mf{F}^B_0$: a linear, $ \pi$, and a projective, $ \hat{\pi}$, given on this basis by
	\begin{equation}
		\pi \colon B_{j,k} \longmapsto \phi_j \phi_{-k} ,
		\qquad\qquad
		\hat{\pi} \colon B_{j,k} \longmapsto \normord{\phi_j \phi_{-k}} .
	\end{equation}
	We write $\hat{\mf{b}}_\infty$ for the central extension of which $\hat{\pi}$ is a representation. From now on, we will write $ E^B_{j,k}= \phi_j \phi_{-k}$ and denote $ \hat{E}^B_{j,k} = \normord{\phi_j \phi_{-k}} $. Notice that $\hat{E}^B_{j,k} = - \hat{E}^B_{-k,-j}$.
\end{definition}

There are two main families of elements of $\hat{\mf{b}}_\infty$: the bosonic and the completed cut-and-join operators. Moreover in \cite{GKL21} the authors introduced a new algebra of operators that interpolates between the two families. In the non-spin case, such an algebra was introduced by Bloch--Okounkov~\cite{BO00} and packed into generating functions by Okounkov--Pandharipande in \cite{OP06a}.

\begin{definition}\label{def:B:operators}\leavevmode
	\begin{itemize}
		\item
		For any odd integer $m$, define the \emph{bosonic operators} $\alpha^B$ as:
		\begin{equation}\label{eqn:B:boson}
			\alpha^B_m
			\coloneqq
			\frac{1}{2} \sum_{k \in \Z} (-1)^k \hat{E}^B_{k-m,k}
			=
			\sum_{k > m/2} (-1)^k E^B_{k-m,k} .
		\end{equation}
		The operators $\alpha^B_m$ could also be defined for even arguments, but those would vanish. The bosonic operators form a subalgebra, called the \emph{Heisenberg algebra} $\mf{H}^B$, with canonical commutation relations
		\begin{equation}
			\bigl[ \alpha^B_m,\alpha^B_n \bigr] = \frac{m}{2}\delta_{m+n} .
		\end{equation}

		\item
		For any positive even integer $r$, define the \emph{completed cut-and-join operators} $\mc{F}^B$ as: 
		\begin{equation}\label{eqn:B:power:sum}
			\mc{F}^B_{r+1}
			\coloneqq
			\frac{1}{2} \sum_{k \in \Z} (-1)^k k^{r+1} \hat{E}^B_{k,k}
			=
			\sum_{k >0} (-1)^k k^{r+1} E^B_{k,k} .
		\end{equation}
		Again, the operators $\mc{F}^B_{r+1}$ could also be defined for odd $r$, but those would vanish.

		\item
		For any integer $m$, define the \emph{B-Okounkov--Pandharipande operators} as:
		\begin{equation}\label{eqn:B:OP}
			\mc{E}^{B}_m(z)
			\coloneqq
			\frac{1}{2} \sum_{k \in \Z} (-1)^k e^{(k+\frac{m}{2})z} \hat{E}^B_{k-m,k}
			+
			\delta_m \frac{\qoppa(z)}{\varsigma(z)} .
		\end{equation}
		The identity term involves the functions defined in \labelcref{eqn:fnctns:notation}. We also define the ``non-corrected'' operators by omitting the constant term:
		\begin{equation}
			\hat{\mc{E}}^{B}_m(z)
			\coloneqq
			\frac{1}{2} \sum_{k \in \Z} (-1)^k e^{(k+\frac{m}{2})z} \hat{E}^B_{k-m,k} .
		\end{equation}
		The bosonic and the completed cut-and-join operators are given by
		\begin{equation}
			\alpha_m^B = \mc{E}^{B}_m(0),
			\qquad\quad
			\mc{F}^B_{r+1} = (r+1)! [z^{r+1}] \hat{\mc{E}}^B_0(z) .
		\end{equation}
	\end{itemize}
\end{definition}

The bosonic operators naturally appear in the boson-fermion correspondence. Indeed, the Heisenberg algebra $\mf{H}^B$ has a unique (up to unique isomorphism) irreducible representation, given by $\Gamma = \C[t_1,t_3,\dots]$, with action $ \rho( \alpha^B_m ) = m\frac{\del}{\del t_m} $ and $ \rho (\alpha^B_{-m} ) = \frac{1}{2} t_m $ (for $ m $ a positive odd integer).

\begin{theorem}[{Boson-fermion correspondence, \cite{You89}}]\label{thm:BF:corresp}
	As a representation of $\mf{H}^B$, both $\mf{F}^B_0$ and $\mf{F}^B_1$ are irreducible, and therefore isomorphic to $\Gamma$. Explicitly,
	\begin{equation}
	\begin{aligned}
		\Phi^B \colon \mf{F}^B & \longrightarrow \Gamma[\xi]/(\xi^2 - 1) \\
		\ket{\omega} & \longmapsto \Braket{0 | \Gamma_+^B(t) | \omega} + \Braket{1 | \Gamma_+^B(t) | \omega} \xi
	\end{aligned}
	\end{equation}
	is an isomorphism. Here $\Gamma_+^B(t) = \exp(\sum_{k = 0}^{\infty} t_{2k+1} \alpha_{2k+1}^B )$ is called the vertex operator.
\end{theorem}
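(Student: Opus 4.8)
The plan is to follow the standard template for a boson–fermion correspondence, specialised to the neutral–fermion (type~B) setting, where the new feature is the zero mode $\phi_0$. The statement decomposes into three claims: that $\mf{F}^B_0$ and $\mf{F}^B_1$ are irreducible $\mf{H}^B$-modules; that each is isomorphic to the Fock module $\Gamma = \C[t_1,t_3,\dots]$; and that the explicit map $\Phi^B$ realises these isomorphisms. The decisive input is a character coincidence, so I would start there. Grading $\mf{F}^B$ by energy, the strict–partition basis $\{\ket{\lambda}\}_{\lambda\in\SP}$ gives the graded dimension $\prod_{k\geq1}(1+q^k)$ for $\mf{F}^B_0$, whereas $\Gamma$, with $\deg t_m = m$, has graded dimension $\prod_{k\geq0}(1-q^{2k+1})^{-1}$. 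Euler's identity $\prod_{k\geq1}(1+q^k)=\prod_{k\geq0}(1-q^{2k+1})^{-1}$ equates the two series; the identical count holds for $\mf{F}^B_1$ using the shifted vacuum $\ket{1}=\sqrt{2}\,\phi_0\ket{0}$.

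Next I would exhibit $\ket{0}$ as a highest–weight vector and build the isomorphism abstractly. From \eqref{eqn:B:boson} one has $\alpha^B_m=\sum_{k>m/2}(-1)^k\phi_{k-m}\phi_{-k}$, so for positive odd $m$ every summand ends in a factor $\phi_{-k}$ with $k\geq1$, which annihilates $\ket{0}$; hence $\alpha^B_m\ket{0}=0$. The negative modes $\alpha^B_{-m}$ (with $m>0$ odd) commute among themselves, so $P\mapsto P(\alpha^B_{-1},\alpha^B_{-3},\dots)\ket{0}$ defines a morphism of $\mf{H}^B$-modules $\Gamma\to\mf{F}^B_0$; the Heisenberg relations $[\alpha^B_m,\alpha^B_n]=\tfrac{m}{2}\delta_{m+n}$ let one read off coefficients by applying matching positive modes, proving that the creation operators act freely on $\ket{0}$, so the morphism is injective. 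Combined with the character equality it is an isomorphism in each graded piece, whence $\mf{F}^B_0\cong\Gamma$ is irreducible; the odd sector is treated identically.

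Finally I would identify $\Phi^B$ with this abstract isomorphism by checking that it intertwines the actions, $\Phi^B\circ\alpha^B_m=\rho(\alpha^B_m)\circ\Phi^B$, where $\rho(\alpha^B_m)=m\,\partial_{t_m}$ and $\rho(\alpha^B_{-m})=\tfrac12 t_m$. This is the vertex–operator computation: one commutes $\alpha^B_m$ through $\Gamma_+^B(t)=\exp(\sum_k t_{2k+1}\alpha^B_{2k+1})$ inside the pairings $\braket{0|\Gamma_+^B(t)|\omega}$ and $\braket{1|\Gamma_+^B(t)|\omega}$; for $m>0$ the commutator yields the derivative $m\,\partial_{t_m}$ and the leftover positive mode annihilates the bra vacuum, while for $m<0$ it yields multiplication by $\tfrac12 t_{-m}$. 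Since $\Phi^B$ intertwines the action and sends $\ket{0}\mapsto1$, $\ket{1}\mapsto\xi$, it must be the isomorphism just constructed. (Alternatively, evaluating $\Phi^B(\ket{\lambda})$ by Wick's theorem via \cref{lem:VEV:Fock:basis:B} produces, up to explicit scalars, the Schur $Q$-functions $Q_\lambda(t)$, which form a $\C$-basis of $\Gamma$, giving bijectivity at once.) The main obstacle throughout is the bookkeeping around the zero mode: the relation $\{\phi_0,\phi_0\}=1$ forces $u[0]=\tfrac12$ in \cref{lem:VEV:Fock:basis:B}, the normalisation $\sqrt{2}$ in $\ket{1}$, and the auxiliary variable $\xi$ with $\xi^2=1$, and it is exactly the zero mode that couples the two sectors. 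Keeping every factor of $\tfrac12$ and $\sqrt{2}$ consistent — so that $\Phi^B$ is a genuine, unit–preserving isomorphism onto $\Gamma[\xi]/(\xi^2-1)$ rather than one only up to scale — together with the central/normal-ordering correction distinguishing $\pi$ from $\hat{\pi}$, is the delicate part of the argument.
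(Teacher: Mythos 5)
The paper itself offers no proof of this theorem: it is imported verbatim from You \cite{You89}, so there is no internal argument to compare yours against. Your proposal is the standard proof and its structure is sound: the highest-weight property $\alpha^B_m\ket{0}=0$ for positive odd $m$ (read off from \cref{eqn:B:boson}), freeness of the creation operators (equivalently, nondegeneracy of the diagonal Gram matrix $\braket{0|\alpha^B_\mu\alpha^B_{-\nu}|0}$, which in this paper's language follows from \cref{prop:boson:CnJ:action} and orthogonality of Sergeev characters, cf.\ \cref{lem:vev:identity}), the character match via Euler's identity $\prod_{k\ge 1}(1+q^k)=\prod_{k\ge 0}(1-q^{2k+1})^{-1}$, and the vertex-operator intertwining computation. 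Your parenthetical alternative --- evaluating $\Phi^B(\ket{\lambda})$ by Wick's theorem and recognising Schur $Q$-functions, which form a basis of $\Gamma$ --- is essentially the route taken in the cited reference, so in that sense your argument reconstructs the ``paper's'' proof.

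One caveat, and it sits exactly at the point you yourself flagged as delicate. Your claimed intertwining relations are off by a factor of $m$. Since the positive odd modes commute among themselves, $\partial_{t_m}\Gamma^B_+(t)=\Gamma^B_+(t)\,\alpha^B_m$, and since $[\,\sum_k t_{2k+1}\alpha^B_{2k+1},\alpha^B_{-m}]=\tfrac{m}{2}t_m$ is central, $\Gamma^B_+(t)\,\alpha^B_{-m}=\bigl(\alpha^B_{-m}+\tfrac{m}{2}t_m\bigr)\Gamma^B_+(t)$. Together with $\bra{0}\alpha^B_{-m}=\bra{1}\alpha^B_{-m}=0$, this shows that $\Phi^B$ intertwines $\alpha^B_m$ with $\partial_{t_m}$ (not $m\,\partial_{t_m}$) and $\alpha^B_{-m}$ with $\tfrac{m}{2}t_m$ (not $\tfrac12 t_m$). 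This does not invalidate the proof: the two realisations of the Fock module on $\Gamma$ are identified by the rescaling $t_m\mapsto t_m/m$, so $\mf{F}^B_0\cong\Gamma$ as $\mf{H}^B$-modules and irreducibility follow exactly as you argue. But the literal identity $\Phi^B\circ\alpha^B_m=\rho(\alpha^B_m)\circ\Phi^B$, with $\rho$ normalised as in the paper, is false as stated; you should either rescale the variables in $\Gamma^B_+$ or in $\rho$ to make the diagram commute. (The same looseness is present in the paper's own formulation of the theorem, so this is a point worth fixing rather than inheriting.)
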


Via the boson-fermion correspondence, one can also compute the action of the bosonic and completed cut-and-join operators on the fermionic Fock space as follows.

\begin{proposition}\label{prop:boson:CnJ:action}
	The following results hold.
	\begin{itemize}
		\item
		For an odd partition $\mu \in \OP_d$, set $\alpha^B_{\pm\mu} = \alpha^B_{\pm\mu_1} \cdots \alpha^B_{\pm\mu_n}$. Then
		\begin{equation}
			\alpha^B_{-\mu} \ket{0}
			=
			\sum_{\lambda \in \SP_d} 2^{- \frac{p(\lambda)}{2} - \ell(\mu)} \zeta_\mu^\lambda \ket{\lambda} ,
			\qquad\quad
			\alpha^B_{\mu} \ket{\lambda} = 2^{- \frac{p(\lambda)}{2} - \ell(\mu)} \zeta_\mu^\lambda \ket{0} .
		\end{equation}
		where $\zeta_\mu^\lambda$ are the characters of the Sergeev group.

		\item
		For any strict partition $\lambda \in \SP$ and even positive integer $r$, the action of the completed cut-and-join operator $\mc{F}^B_{r+1}$ of type B is given by
		\begin{equation}
			\mc{F}^B_{r+1} \ket{\lambda} = p_{r+1}(\lambda) \ket{\lambda} ,
		\end{equation}
		where $p_{r+1}$ is the usual symmetric power-sum of odd index.
	\end{itemize}
\end{proposition}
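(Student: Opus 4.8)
The plan is to handle the two bullets by different methods: the completed cut-and-join statement by a direct eigenvalue computation in the Clifford algebra, and the bosonic statement by transporting the classical characteristic-map identity for Schur $Q$-functions through the boson-fermion correspondence of \cref{thm:BF:corresp}.

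For the second bullet, I would compute the action of $\mc{F}^B_{r+1} = \sum_{k>0}(-1)^k k^{r+1} E^B_{k,k}$ on $\ket{\lambda} = \phi_{\lambda_1}\cdots\phi_{\lambda_{\ell(\lambda)}}\ket{p(\lambda)}$ directly, using the CAR \eqref{eqn:CAR:B}. Since the parts $\lambda_i$ are positive, each $\phi_{\lambda_i}$ is a creation operator, while for $k>0$ the operator $\phi_{-k}$ is an annihilator with $\phi_{-k}\ket{p(\lambda)}=0$. Anticommuting $\phi_{-k}$ to the right through $\phi_{\lambda_1}\cdots\phi_{\lambda_{\ell(\lambda)}}$ and using $\{\phi_{-k},\phi_m\} = (-1)^k\delta_{m,k}$, the result is $0$ unless $k$ equals one of the parts $\lambda_j$, in which case the anticommutator produces a factor $(-1)^k$ and the two transpositions needed to remove and reinsert $\phi_{\lambda_j}$ contribute a sign $(-1)^{2(j-1)}=1$. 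Hence $E^B_{k,k}\ket{\lambda} = (-1)^k\ket{\lambda}$ when $k$ is a part of $\lambda$ and $0$ otherwise. Substituting into the sum, the prefactor $(-1)^k$ cancels the eigenvalue $(-1)^k$, leaving $\mc{F}^B_{r+1}\ket{\lambda} = \sum_i \lambda_i^{r+1}\ket{\lambda} = p_{r+1}(\lambda)\ket{\lambda}$, as claimed. The same sign bookkeeping, combined with $\hat{E}^B_{k,k}=-\hat{E}^B_{-k,-k}$, explains why the operator vanishes for odd $r$, justifying the restriction to even $r$.

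For the first bullet, I would pass to the bosonic side. Under $\Phi^B$ the vacuum $\ket{0}$ maps to $1$, and since $\rho(\alpha^B_{-m}) = \tfrac12 t_m$ acts by multiplication, $\alpha^B_{-\mu}\ket{0}$ maps to $2^{-\ell(\mu)} t_\mu$, a normalized odd power-sum monomial. It then remains to identify the image of $\ket{\lambda}$: one checks that $\Phi^B(\ket{\lambda})$ is a fixed scalar multiple of the Schur $Q$-function $Q_\lambda$, the scalar being a power of $2$ depending only on $p(\lambda)$ and $\ell(\lambda)$. The content of the first displayed identity is then precisely the classical expansion of the odd power sums in the basis of Schur $Q$-functions, whose transition coefficients are, by the J\'ozefiak--Sergeev characteristic map, the Sergeev characters $\zeta^\lambda_\mu$; comparing the two sides fixes all powers of $2$. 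The second displayed identity is the adjoint statement: $\alpha^B_\mu$ lowers degree by $|\mu|=d$, so $\alpha^B_\mu\ket{\lambda}\in\mf{F}^B_0$ has degree $0$ and is therefore a multiple of $\ket{0}$; the multiple is read off either from $\rho(\alpha^B_\mu)=\prod_i\mu_i\,\partial_{t_{\mu_i}}$ acting on $\Phi^B(\ket{\lambda})$ and extracting the $t_\mu$-coefficient, or equivalently from the self-duality of the Heisenberg action with respect to the bilinear form, and it reproduces the same Sergeev character with the stated normalization.

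The routine parts are the sign bookkeeping in the Clifford computation and the differentiation in the bosonic realization. The main obstacle is the normalization: one must pin down the exact power of $2$ relating $\ket{\lambda}$ to $Q_\lambda$ and match it against the chosen convention for the Sergeev characters, equivalently the inner product $\langle p_\mu,p_\nu\rangle = 2^{-\ell(\mu)}z_\mu\,\delta_{\mu\nu}$ on $\Gamma$, so that the prefactors $2^{-p(\lambda)/2-\ell(\mu)}$ come out correctly on both identities simultaneously.
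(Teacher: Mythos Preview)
Your proposal is correct and essentially follows the approach the paper indicates. The paper does not actually give a proof of this proposition: it is stated as background material adapted from \cite{GKL21}, preceded only by the sentence ``Via the boson-fermion correspondence, one can also compute the action of the bosonic and completed cut-and-join operators on the fermionic Fock space as follows.'' Your argument for the first bullet---transporting the power-sum/Schur-$Q$ expansion through $\Phi^B$---is precisely what that sentence encodes, and your direct Clifford computation for the second bullet is the standard and expected verification of the eigenvalue statement.
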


In the next sections, we will also make use of some useful properties of the B-Okounkov--Pandharipande operators. Once again, the properties involves the functions \labelcref{eqn:fnctns:notation}. We recall them here for the reader's convenience, and refer to \cite{GKL21} for a proof.

\begin{proposition}\label{prop:OP:VEV}
	The B-Okounkov--Pandharipande operators satisfy the following properties.
	\begin{itemize}
		\item
		\textup{\textsc{Vacuum expectation values:}}
		\begin{equation}
			\Braket{ \mc{E}^{B}_m(z) }
			=
			\delta_m \frac{\qoppa(z)}{\varsigma(z)}
			=
			\frac{\delta_m}{4} \coth \left( \tfrac{z}{2} \right) .
		\end{equation}

		\item
		\textup{\textsc{Parity relations:}}
		\begin{equation}
			\mc{E}^{B}_m(-z) = (-1)^{m+1} \mc{E}^{B}_m(z) .
		\end{equation}
		The same holds for the corrected operators.

		\item
		\textup{\textsc{Commutation relations:}} the subspace of $\mf{b}_\infty $ spanned by the coefficients $[z^k]\mc{E}^{B}_m(z)$ is a Lie subalgebra. Explicitly,
		\begin{equation}\label{eqn:spin:OP:comm}
			\bigl[ \mc{E}^{B}_m(z),\mc{E}^{B}_n(w) \bigr]
			=
			\frac{1}{2}
			\varsigma\bigl( \left| \begin{smallmatrix}
				m & z \\
				n & w
			\end{smallmatrix} \right| \bigr) \mc{E}^{B}_{n+m}(z+w)
			+ \frac{(-1)^n}{2}
			\varsigma\bigl(\left| \begin{smallmatrix}
				m & - z \\
				n & w
			\end{smallmatrix} \right| \bigr) \mc{E}^{B}_{n+m} (z-w) .
		\end{equation}
	\end{itemize}
\end{proposition}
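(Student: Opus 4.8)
The three statements differ considerably in difficulty, so I would dispatch the first two quickly and concentrate on the commutation relations, which are the real content.

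\textbf{Vacuum expectation values.} This is essentially forced by the definitions. The operators $\hat{E}^B_{j,k} = \normord{\phi_j\phi_{-k}}$ are normal ordered, and normal ordering subtracts exactly the scalar $\braket{\phi_j\phi_{-k}}$ computed in \cref{lem:VEV:Fock:basis:B}; hence $\braket{\hat{E}^B_{j,k}} = 0$ for all $j,k$. Therefore $\braket{\hat{\mc{E}}^B_m(z)} = 0$ term by term, and only the correction survives, giving $\braket{\mc{E}^B_m(z)} = \delta_m\,\qoppa(z)/\varsigma(z)$. The last equality $\qoppa(z)/\varsigma(z) = \tfrac14\coth(z/2)$ is a one-line manipulation of $\qoppa(z) = \tfrac12\cosh(z/2)$ and $\varsigma(z) = 2\sinh(z/2)$. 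As a cross-check on the value and on the origin of the correction term, I would resum $\tfrac12\sum_k (-1)^k e^{kz}\braket{\phi_k\phi_{-k}} = \tfrac12\sum_{k\le 0} e^{kz}\,u[-k]$ as a geometric series and confirm it equals $\tfrac14\coth(z/2)$.

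\textbf{Parity relations.} The correction $\delta_m\,\qoppa(z)/\varsigma(z)$ is odd in $z$ (as $\coth$ is odd) and present only for $m=0$, which matches $(-1)^{m+1}=-1$ there, so it suffices to treat $\hat{\mc{E}}^B_m$. The plan is to reindex the sum by the reflection $k\mapsto m-k$, which fixes the center $k=m/2$ of the operators $\hat{E}^B_{k-m,k}$, and to invoke the antisymmetry $\hat{E}^B_{j,k} = -\hat{E}^B_{-k,-j}$ recorded after the definition of $\mf{b}_\infty$. Under this substitution each $\hat{E}^B_{k-m,k}$ is carried to minus itself, the sign $(-1)^k$ picks up a factor $(-1)^m$, and the exponential weight is sent to its image under $z\mapsto -z$; collecting these factors produces the global sign $(-1)^{m+1}$ and establishes $\mc{E}^B_m(-z) = (-1)^{m+1}\mc{E}^B_m(z)$, the computation with normal ordering omitted giving the statement for $\hat{\mc{E}}^B_m$.

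\textbf{Commutation relations.} This is the crux. I would first compute the bracket of two fermion bilinears directly from the canonical anticommutation relations \cref{eqn:CAR:B}. Writing $[\phi_a\phi_{-b},\phi_c\phi_{-d}]$ with the standard identity for a commutator of two even quadratic elements, it expands into four terms, each carrying one anticommutator $\{\phi_\bullet,\phi_\bullet\}=(-1)^\bullet\delta_{\bullet+\bullet}$ (a scalar producing a Kronecker delta) times a surviving bilinear. Re-expressed in the $E^B$-basis and passed to normal order, this is exactly the bracket of $\hat{\mf{b}}_\infty$: a combination of four $\hat{E}^B$'s subject to delta-constraints, plus a central scalar coming from the reordering when the two contractions straddle the vacuum. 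Substituting $\hat{E}^B_{k-m,k}$, $\hat{E}^B_{l-n,l}$ I would then resum
\[
	[\hat{\mc{E}}^B_m(z),\hat{\mc{E}}^B_n(w)]
	=
	\tfrac14 \sum_{k,l} (-1)^{k+l}\, e^{(k+\frac{m}{2})z}\, e^{(l+\frac{n}{2})w}\,
	[\hat{E}^B_{k-m,k},\hat{E}^B_{l-n,l}] .
\]
Each delta-constraint collapses the double sum to a single sum, and after using the antisymmetry to bring every surviving operator to the standard shape $\hat{E}^B_{j-(m+n),j}$, the contributions organize into two families according to whether the remaining exponent depends on $j$ through $z+w$ or through $z-w$; this is the mechanism producing the terms $\mc{E}^B_{m+n}(z+w)$ and $\mc{E}^B_{m+n}(z-w)$.

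\textbf{Main obstacle.} The difficulty is entirely in the bookkeeping of this resummation: tracking the signs $(-1)^k$ and the parity factor $(-1)^n$, and checking that the $j$-\emph{independent} prefactors of the two families assemble into $\tfrac12\varsigma\bigl(\left|\begin{smallmatrix} m & z \\ n & w\end{smallmatrix}\right|\bigr)$ and $\tfrac{(-1)^n}{2}\varsigma\bigl(\left|\begin{smallmatrix} m & -z \\ n & w\end{smallmatrix}\right|\bigr)$ — the determinantal arguments $mw \mp nz$ arising precisely from the surviving exponents once $j$ is summed out. The most delicate point is that the accumulated central scalars from normal ordering must be exactly absorbed into the $\delta_{m+n}$-correction carried by the \emph{corrected} operator $\mc{E}^B_{m+n}$ rather than by $\hat{\mc{E}}^B_{m+n}$; this is what makes the relation close on the corrected operators, and is the neutral-fermion analogue of the cocycle computation for the ordinary Okounkov--Pandharipande operators. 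I would verify this last point either by a direct resummation of the contraction terms (as in the vacuum-expectation check above) or by testing both sides on the basis $\ket{\lambda}$, $\lambda\in\SP$, via \cref{prop:boson:CnJ:action}.
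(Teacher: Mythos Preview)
The paper does not actually prove this proposition: immediately before stating it, the authors write ``We recall them here for the reader's convenience, and refer to \cite{GKL21} for a proof.'' So there is no in-paper argument to compare against; the statement is imported wholesale from the companion paper.

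That said, your proposal is the standard route and is the one carried out in \cite{GKL21}. The vacuum expectation value and parity arguments are exactly right in outline. For the commutation relation, your plan --- expand $[\phi_a\phi_{-b},\phi_c\phi_{-d}]$ via the CAR, collapse the double sum along the four Kronecker deltas, use the antisymmetry $\hat{E}^B_{j,k}=-\hat{E}^B_{-k,-j}$ to bring every surviving bilinear to the shape $\hat{E}^B_{j-(m+n),j}$, and then recognise the two resulting families as $\hat{\mc{E}}^B_{m+n}(z\pm w)$ with the $\varsigma$-prefactors --- is precisely how it goes. Your identification of the one genuinely delicate point, namely that the scalar contractions arising from normal ordering must assemble into the $\delta_{m+n}\,\qoppa/\varsigma$ correction so that the relation closes on the \emph{corrected} operators $\mc{E}^B$, is also on target; this is the B-analogue of the cocycle check in \cite{OP06a}.

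One small caution on the parity step: the reindexing $k\mapsto m-k$ does interact with the precise exponent $(k+\tfrac{m}{2})$ in the paper's convention, and you should be prepared to combine it with the antisymmetry of $\hat{E}^B$ in the right order to make the signs come out; the mechanism you describe is correct, but the bookkeeping is convention-sensitive.
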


Note that \cite{You89,Lee20,GKL21} give strong links between the A- and B-type theories.

\subsection{Spin Hurwitz numbers}
\label{subsec:spin:HNs}

As explained in the introduction, spin Hurwitz numbers are weighted counts of covers of a curve with a spin structure, with a sign given by the parity \cite{EOP08}. Spin structures can be pulled back along branched covers, as long as all ramifications are odd: in that case the ramification divisor is even.

\begin{definition}
	Let $ (B,\vartheta )$ be a spin curve and $f \colon C \to B $ a branched cover with only odd ramifications. Denote by $R_f$ its ramification divisor. Then the \emph{twisted pullback of $\vartheta$ along $f$} is the line bundle on $C$
	\begin{equation}
		N_{f,\vartheta} \coloneqq f^{\ast}\vartheta \otimes \mc{O}(\tfrac{1}{2} R_f) .
	\end{equation}
	It is a spin structure on $C$.
\end{definition}

\begin{definition}\label{def:general:spin:HNs}
	Let $ (B,\vartheta )$ be a spin curve, $d$ a non-negative integer, $ x_1, \dotsc, x_n \in B$, and $ \mu^1, \dotsc, \mu^n \in \OP_d$. The \emph{spin Hurwitz number} is defined as
	\begin{equation}
		H_d(B,\vartheta ; \mu^1, \dotsc, \mu^n )
		\coloneqq
		\sum_{[f \colon C \to B]} \frac{(-1)^{p(N_{f,\vartheta})}}{|\Aut(f)|} ,
	\end{equation}
	where the sum is over all isomorphism classes of connected ramified covers with ramification profile $\mu^i $ over $x_i$ and unramified anywhere else. As usual, when dealing with disconnected covers, we add a superscript $ \bullet$.
\end{definition}

For partitions $\mu^i$ of arbitrary size, one can extend the definition as follows: if at least one of the $\mu^i$ has size greater than $d$, the Hurwitz number vanishes; otherwise
\begin{equation}\label{eqn:H:def}
	H_d(B,\vartheta ; \mu^1, \dotsc, \mu^n )
	\coloneqq
	\left( \prod_{i=1}^n \binom{m_1(\widehat{\mu^i})}{{m_1(\mu^i)}} \right)
	H_d\bigl( B,\vartheta ; \widehat{\mu^1}, \dotsc, \widehat{\mu^n} \bigr) ,
\end{equation}
where $\widehat{\mu}$ is the partition of size $d$ obtained from $\mu$ by adding 1's, and $m_1(\mu)$ stands for the number of entries equal to $1$.

For any positive integer $k$, we introduce the spin analogue of the $k$-th {\em completed cycle} (see e.g. \cite{MMN20, GKL21}) as an element
\begin{equation}\label{eqn:completed:cycle}
	\overline{c}_{k}
	\coloneqq
	\sum_{\mu \in \OP} \kappa_{k,\mu} \cdot \mu \in \C\{ \OP \} ,
\end{equation}
defined in terms of representation theory of Sergeev groups. We give an explicit expression in the following lemma.

\begin{lemma}
	For all odd partitions $\mu=(\mu_1,\ldots,\mu_\ell)$,
	\begin{equation}\label{eqn:CompCyclesFormula}
		\kappa_{k,\mu}
		\coloneqq
		2^{-g} (k-1)! \frac{\prod_{i=1}^\ell \mu_i}{|\mu|!} \bigl[ z^{2g} \bigr]
		\mc{S}(2z)\mc{S}(z)^{|\mu|-2}
		\prod_{i=1}^\ell \mc{S}(\mu_i z) \,, \qquad 2g = k+1-|\mu|-\ell(\mu).
	\end{equation}
\end{lemma}

\begin{proof}
	This follows by combining \cite[Theorem 1.1]{LP13} with \cite[Definition 4.5 \& Proposition 6.1]{GKL21} to find that
	\begin{equation}
		H( B, \theta; \mu^1, \dotsc, \mu^n, \bar{c}_k) = \sum_\mu \Big( \prod_{i = 1}^{\ell (\mu)} \mu_i \Big)  \, |\Aut (\mu)| \, H(\P^1, \mc{O}(-1); \mu, \bar{c}_k) H(B, \theta; \mu^1, \dotsc, \mu^n, \mu)\,,
	\end{equation}
	and hence,
	\begin{equation}
		\begin{split}
			\kappa_{k,\mu} 
			&= 
			\Big( \prod_{i = 1}^{\ell (\mu)} \mu_i \Big) \, |\Aut (\mu)| \, H(\P^1, \mc{O}(-1); \mu, \bar{c}_k)
			\\
			&=
			\frac{\prod_{i=1}^{\ell (\mu)} \mu_i}{|\mu|!} h^{+,r,b=1}_{\mu,(1^{|\mu|})}
			\\
			&=
			\frac{2^{1-g}(k-1)!}{|\mu|!} [z^k] \Big\< \prod_{i=1}^{\ell (\mu)} \mc{E}^B_{\mu_i}(0) \mc{E}^B_0 (z) \mc{E}_{-1}(0)^{|\mu|} \Big\>
			\\
			&=
			\frac{2^{1-g}(k-1)!\prod_{i=1}^{\ell(\mu)}\mu_i}{|\mu|!} [z^{2g}] \qoppa (z) \mc{S}(z)^{|\mu|-1} \prod_{i=1}^{\ell (\mu)} \mc{S} (\mu_i)\,,
		\end{split}
	\end{equation}
	which is equal to the statement of the lemma, because $ 2 \qoppa (z) \mc{S}(z) = \mc{S}(2z)$.
\end{proof}
Notice that, since $\mu$ is an odd partition and the function $\mc{S}(z) = \frac{e^{z/2} - e^{-z/2}}{z}$ is even, $\kappa_{k,\mu} = 0$ unless $k$ is an odd integer. Moreover, as $\mc{S}(z) = 1 + O(z^2)$, one can easily prove that the coefficients $\kappa_{k,\mu}$ satisfy property (1) from \cref{conj:degen:sGW}: if $|\mu| \ge k$, then $\kappa_{k,\mu} = 0$ unless $\mu = (k)$, and in this case $\kappa_{k,(k)} = 1$.

We are interested in the case $(B,\vartheta) = (\P^1,\mc{O}(-1))$, only one generic ramification $\mu$, and all other ramifications given by completed cycles. For these specific spin Hurwitz numbers, let us introduce the following notation.

\begin{definition}\label{def:spin:HNs}
	Let $r$ be a positive even integer. The \emph{$(r+1)$-completed cycles spin single Hurwitz numbers} for genus $g$ and generic ramification $\mu \in \OP_d$ are defined by
	\begin{equation}
		h^{+,r}_{g;\mu}
		\coloneqq
		\frac{|\Aut{(\mu)}|}{b!} H_d \bigl( \P^1, \mc{O}(-1); \mu, (\bar{c}_{r+1})^b \bigr) ,
	\end{equation}
	where $b = \frac{1}{r}(2g-2+ \ell (\mu) + |\mu|)$ is needed from the Riemann--Hurwitz formula to obtain genus $g$ source curves. The above definition is obtained by multilinearity, combining \cref{eqn:H:def,eqn:completed:cycle}.
\end{definition}

The superscript $+$ in the notation refers to $ \mc{O}(-1)$ being an even spin structure on $ \P^1$.

\subsubsection{Spin Hurwitz numbers as vacuum expectation}
\label{subsec:sHN:as:vev}

The enumeration of Hurwitz covers of $(\P^1, \mc{O}(-1))$ is known to be equivalent to multiplication in the class algebra of the Sergeev group \cite{EOP08,Gun16}. In other words, spin Hurwitz numbers can be expressed in terms of characters of the Sergeev group. Comparing the character formula and the action of the bosonic and completed cut-and-join operators reported in \cref{prop:boson:CnJ:action}, one can express spin Hurwitz numbers as vacuum expectations in the Fock space of type B.

\begin{proposition}[{\cite{Lee20,GKL21}}] \label{prop:sHN:VEV}
	Disconnected spin Hurwitz numbers with completed cycles are given by
	\begin{equation}\label{eqn:sHN:VEV}
		h_{g;\mu}^{\bullet, +, r}
		=
		2^{1-g} [u^b]
		\Braket{
			e^{\alpha_{1}^{B}}
			e^{u \frac{\mc{F}^B_{r+1}}{r+1}}
			\left( \prod_{i=1}^{\ell(\mu)} \frac{\alpha^B_{-\mu_i}}{\mu_i} \right)
		} ,
	\end{equation}
	where $b = \frac{2g - 2 + |\mu| +n}{r}$ is the number of completed cycles ramifications in the cover.
\end{proposition}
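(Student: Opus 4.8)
The plan is to reduce the enumeration to the character theory of the Sergeev group and then to translate the resulting Frobenius-type sum into the neutral fermion Fock space using the operator actions recorded in \cref{prop:boson:CnJ:action}. Since the exponentials $e^{\alpha^B_1}$ and $e^{u\mc{F}^B_{r+1}/(r+1)}$ generate covers with possibly disconnected domain, the vacuum expectation value on the right-hand side is naturally a disconnected count, matching the $\bullet$ superscript.

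First I would recall the classical equivalence between spin Hurwitz numbers of $(\P^1,\mc{O}(-1))$ and multiplication in the class algebra of the Sergeev group \cite{EOP08,Gun16}. A disconnected cover of $\P^1$ with generic ramification profile $\mu$ over one point and $(r+1)$-completed cycles over $b$ further points is counted, with its parity sign, by a sum over irreducible spin representations indexed by strict partitions $\lambda \in \SP_d$: the character at the generic profile contributes $\zeta^\lambda_\mu$, the character at the identity class contributes $\zeta^\lambda_{(1^d)}$ (a dimension factor), and each completed cycle $\bar{c}_{r+1}$ acts as the central character, i.e. as multiplication by the power sum $p_{r+1}(\lambda)$. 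This last point is exactly the eigenvalue statement $\mc{F}^B_{r+1}\ket{\lambda} = p_{r+1}(\lambda)\ket{\lambda}$ of \cref{prop:boson:CnJ:action}, together with the fact that in the $\alpha^B$-basis the operator $\mc{F}^B_{r+1}$ is the completed cycle $\bar{c}_{r+1}$ of \cref{eqn:completed:cycle}; the normalisation $|\Aut(\mu)|/b!$ and the multilinear extension of \cref{defn:spin:HNs} account for the labelling of the $b$ completed-cycle points.

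Next I would evaluate the right-hand side directly. Writing $\prod_i \alpha^B_{-\mu_i}/\mu_i = (\prod_i \mu_i)^{-1}\alpha^B_{-\mu}$ and applying the first identity of \cref{prop:boson:CnJ:action} gives $\alpha^B_{-\mu}\ket{0} = \sum_{\lambda \in \SP_d} 2^{-p(\lambda)/2-\ell(\mu)}\zeta^\lambda_\mu \ket{\lambda}$. Since $\mc{F}^B_{r+1}$ is diagonal, the operator $e^{u\mc{F}^B_{r+1}/(r+1)}$ rescales each $\ket{\lambda}$ by $\exp(up_{r+1}(\lambda)/(r+1))$, and extracting $[u^b]$ yields the factor $(p_{r+1}(\lambda)/(r+1))^b/b!$. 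Finally, pairing against $\bra{0}e^{\alpha^B_1}$ and using that $\alpha^B_1$ lowers the degree by one, only the degree-$d$ term $(\alpha^B_1)^d/d! = \alpha^B_{(1^d)}/d!$ survives, so the second identity of \cref{prop:boson:CnJ:action} gives $\braket{0|e^{\alpha^B_1}|\lambda} = (d!)^{-1}2^{-p(\lambda)/2-d}\zeta^\lambda_{(1^d)}$. Collecting these contributions, the vacuum expectation value becomes a single sum over $\lambda \in \SP_d$ of the product of $\zeta^\lambda_\mu$, $\zeta^\lambda_{(1^d)}$, an explicit power of $2$, and $(p_{r+1}(\lambda))^b$, which is precisely the shape of the Frobenius sum from the first step.

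It then remains to match all prefactors, and I expect this bookkeeping of constants to be the main obstacle, since it is where the genuinely spin-theoretic subtleties enter. One must reconcile the powers of two coming from the character normalisation $2^{-p(\lambda)/2}$ with the global factor $2^{1-g}$, verify that the parity weight $(-1)^{p(N_{f,\vartheta})}$ of each cover is faithfully reproduced by the signs built into the Sergeev characters $\zeta^\lambda_\mu$ over $\SP_d$, and use the Riemann--Hurwitz relation $rb = 2g-2+\ell(\mu)+d$ to trade the representation-theoretic exponent for the genus-dependent factor. Once these identifications are carried out, the two expressions agree term by term, establishing the claimed identity; the argument is a spin refinement of the standard boson--fermion computation and follows \cite{Lee19,GKL21}.
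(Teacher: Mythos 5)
Your proposal is correct and follows essentially the same route as the paper: the proposition is stated there as a citation to \cite{Lee19,GKL21}, and the paragraph immediately preceding it sketches precisely your argument — the equivalence of parity-weighted spin Hurwitz counts with multiplication in the Sergeev class algebra \cite{EOP08,Gun16}, combined with the operator actions recorded in \cref{prop:boson:CnJ:action}. Your evaluation of the vacuum expectation value (expanding $\alpha^B_{-\mu}\ket{0}$ over strict partitions, using the diagonal action of $\mc{F}^B_{r+1}$ with eigenvalue $p_{r+1}(\lambda)$, and pairing against $\bra{0}e^{\alpha^B_1}$ through its degree-$d$ term) is the standard way to carry out that sketch, with only the routine reconciliation of powers of $2$ and the Riemann--Hurwitz substitution $rb = 2g-2+\ell(\mu)+d$ left as bookkeeping.
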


We remark that the above vacuum expectation can be written in a more symmetric way:
\begin{equation}
	h_{g;\mu}^{\bullet, +, r}
	=
	2^{1-g} [u^b]
	\Braket{
		\prod_{i=1}^{\ell(\mu)}
		e^{\alpha_{1}^{B}}
		e^{u \frac{\mc{F}^B_{r+1}}{r+1}}
		\frac{\alpha^B_{-\mu_i}}{\mu_i} 
		e^{u \frac{\mc{F}^B_{r+1}}{r+1}}
		e^{-\alpha_{1}^{B}}
	} .
\end{equation}
We will also need an expression for a slightly different collection of Hurwitz numbers.

\begin{proposition}\label{prop:mixed:HN:VEV}
	Disconnected spin Hurwitz numbers with mixed completed cycles are given by
	\begin{equation}
		H^\bullet_d(\P^1, \mc{O}(-1); \bar{c}_{2k_1+1}, \ldots, \bar{c}_{2k_b+1} )
		=
		2^d
		\Braket{
			\frac{(\alpha^B_1)^d}{d!}
			\left(
				\prod_{j=1}^b \frac{(2k_j)!}{2^{k_j}} [z^{2k_j+1}]\hat{\mc{E}}^B_0(z)
			\right)
			\frac{(\alpha^B_{-1})^d}{d!}
		} .
	\end{equation}
\end{proposition}

\begin{proof}
	The proof is completely analogous to that of \cite[proposition 6.1]{GKL21}.
\end{proof}

\subsubsection{A spin ELSV formula}
\label{subsec:spin:ELSV}

The vacuum expectation formula allows for explicit computations of the correlation functions associated to spin Hurwitz numbers with completed cycles, which was proved to be equivalent to the Eynard--Orantin topological recursion by Alexandrov and Shadrin \cite{AS23}. Prior to their work, topological recursion for spin Hurwitz numbers was shown to be equivalent to an ELSV-type formula in \cite{GKL21}. We recall here the formula for $r = 2$ (i.e. spin $3$-completed cycles), which is the simplest instance of Hurwitz numbers with completed cycles in the spin case.

\begin{theorem}[{\cite{GKL21,AS23}}]\label{thm:spin:ELSV}
	Spin Hurwitz numbers with $3$-completed cycles are given by
	\begin{equation}\label{eqn:spin:ELSV}
		h^{+, 2}_{g;\mu}
		=
		2^{g-1+n}
		\left( \prod_{i=1}^n \frac{\mu_i^{\frac{\mu_i - 1}{2}}}{\bigl(\frac{\mu_i - 1}{2}\bigr)!} \right)
		\int_{\M_{g,n}} \frac{ \Lambda (2) \Lambda (-1)}{\prod_{i=1}^n (1-\mu_i \psi_i)}
		,
	\end{equation}
	where $\Lambda(t) = \sum_{i=0}^g t^i \lambda_i$ is the Chern polynomial of the Hodge bundle.
\end{theorem}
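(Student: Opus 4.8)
The plan is to establish \cref{thm:spin:ELSV} through the Eynard--Orantin topological recursion, realising the two-step programme alluded to above. The starting point is the operator expression for the numbers $h^{+,2}_{g;\mu}$ furnished by \cref{prop:sHN:VEV} with $r=2$. First I would assemble these numbers, as $\mu$ ranges over $\OP_d$ and $d$ over the positive integers, into connected $n$-point correlation differentials $\omega_{g,n}(z_1,\dots,z_n)$ by a principal specialisation that tracks each part $\mu_i$ through a generating variable and extracts the connected free energies (taking logarithms to pass from the disconnected vacuum expectations to their connected parts). Because the generating operators $\hat{\mc{E}}^B_m(z)$ obey the closed commutation relations \eqref{eqn:spin:OP:comm}, the vacuum expectations reduce, upon commuting past $e^{\alpha^B_1}$ and the completed cut-and-join exponential, to explicit rational expressions in the spectral variables. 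This identifies the spectral curve: a genus-zero curve with global coordinate $z$ whose disc amplitude $\omega_{0,1}=y\,dx$ and Bergman kernel $\omega_{0,2}$ are read off from the one- and two-point functions, and whose branch-point expansion produces, via Lagrange inversion on $x=x(z)$, precisely the combinatorial prefactors $\mu_i^{(\mu_i-1)/2}/\bigl((\mu_i-1)/2\bigr)!$ attached to the odd parts.

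The second step is to prove that the differentials $\omega_{g,n}$ so defined genuinely satisfy topological recursion on this spectral curve. This is the analytically hard part and is the Alexandrov--Shadrin input: it is carried out by the $x$--$y$ duality technology, which shows that the operator correlators obey the abstract (linear and quadratic) loop equations associated to the curve. I expect this to be the main obstacle. The reason is structural: the presence of completed cycles already in the simplest oddly-ramified case — rather than the single simple ramification that organises the non-spin story — forces one to control an entire family of higher-order poles in the commutator expansion, instead of the first-order poles of the classical case, and it is exactly this complication that makes a direct verification intractable and channels the argument through the $x$--$y$ swap.

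The third step converts the topological recursion output into the Hodge integrals on the right-hand side. Here I would invoke the Dunin-Barkowski--Orantin--Shadrin--Spitz correspondence: topological recursion on a local spectral curve computes the intersection numbers of $\psi$-classes against the cohomological field theory reconstructed, via the Givental--Teleman formula, from the curve's local data (the $R$-matrix and the translation). The remaining task is purely to match this cohomological field theory with $\Lambda(2)\Lambda(-1)$. I would compute the $R$-matrix from the expansion of $y\,dx$ near the branch point; the quadratic local structure characteristic of the $r=2$ completed cycle yields two Hodge-class factors, and comparing the resulting series with the Chern polynomials pins down the evaluations $t=2$ and $t=-1$, while the normalisation of $\omega_{0,1}$ and $\omega_{0,2}$ accounts for the prefactor $2^{g-1+n}$. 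Matching the denominators $1/(1-\mu_i\psi_i)$ against the principal parts of the correlation differentials then yields the stated formula. The delicate point in this last step is to ensure that the two eigendirections of the local curve produce exactly the unmixed product $\Lambda(2)\Lambda(-1)$, and not some other symmetric combination of Hodge classes; this is controlled by the oddness (parity) built into the $B$-type operators.
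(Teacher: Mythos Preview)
Your outline is essentially correct and matches the route the paper itself indicates: the theorem is not proved in this paper but cited from \cite{GKL21,AS21}, and the footnote in the introduction describes exactly the two-step strategy you sketch --- \cite{GKL21} establishes the equivalence between the ELSV formula and Eynard--Orantin topological recursion on the appropriate spectral curve (your steps one and three, via the DOSS/Givental--Teleman identification of the CohFT), while \cite{AS21} proves the topological recursion statement itself (your step two). The only caveat is that some of your mechanistic details are speculative: for instance, the precise way the product $\Lambda(2)\Lambda(-1)$ emerges in \cite{GKL21} is through the Chiodo-class identification of the CohFT on this particular spectral curve rather than an ``eigendirection'' argument, and the method in \cite{AS21} is not literally $x$--$y$ swap but a direct verification of loop equations via deformation of the spectral curve; still, the architecture of your argument is the right one.
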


The ELSV formula for higher $r$, which involved a product of the Chiodo class and the Witten $2$-spin class, can be found in \cite{GKL21}.

It is useful to gather double Hodge integrals into generating series as follows.

\begin{definition}\label{def:dH:gen}
	Define the connected generating series for double Hodge integrals as
	\begin{equation}\label{eqn:dH:conn:gen}
		\dHconn_{g}(z)
		\coloneqq
		2^{2g - 2 +n}
		\int_{\M_{g,n}} \Lambda(2) \Lambda (-1) \prod_{i = 1}^n \frac{z_i}{1 - z_i \psi_i} .
	\end{equation}
	The $0$-point generating series, both stable and unstable, is set to be $\dHconn_{g}() = 0$. The $1$- and $2$-point unstable cases are defined as
	\begin{equation}\label{eqn:dH:unstable}
		\dHconn_0(z_1) \coloneqq \frac{1}{2z_1} ,
		\qquad\text{and}\qquad
		\dHconn_0(z_1,z_2) \coloneqq \frac{z_1z_2}{z_1+z_2} .
	\end{equation}
	Let us define the corresponding genus generating series and its possibly disconnected counterpart as 
	\begin{equation}
		\dHconn(z;u) \coloneqq \sum_{g=0}^{\infty} u^{g-1} \dHconn_{g}(z) ,
		\qquad\quad
		\dHdisc(z;u) \coloneqq \sum_{\mu \vdash n} \frac{1}{|\Aut(\mu)|} \prod_{k=1}^{\ell(\mu)} \dHconn\bigl( z_{\mu_k};u \bigr) .
	\end{equation}
\end{definition}

\subsection{The 2-BKP hierarchy}
\label{subsec:2-BKP}

The BKP hierarchy is a B-type version of the (A-type) KP hierarchy. By work of the Kyoto school, see e.g. \cite{KM81,SS83}, $\tau$ functions of the KP hierarchy are related to elements of an infinite Grassmannian and hence are naturally related to $ \mf{a}_\infty$. Hence, they can be expressed as vacuum expectation values in the (charged) fermionic Fock space.

In \cite{DKM81}, Date--Kashiwara--Miwa found another, similar hierarchy, which was studied more in \cite{DJKM82,JM83}. It is related to the algebra $ \mf{b}_\infty$, and hence called the BKP hierarchy.\footnote{This is the \emph{small} BKP hierarchy. There is another, large, BKP hierarchy introduced by Kac--Van de Leur~\cite{KVdL98}. The latter is obtained by realising that in the construction of charged fermions, one takes an infinite-dimensional vector space and lets $ \mf{a}_\infty $ act on $ V \oplus V^*$ with the natural inner product. Then this action can be extended to some $ \mf{b}_{2\infty}$.} This hierarchy has many similar properties to the KP hierarchy, and in particular can be studied via the neutral fermion formalism. Its tau-functions are related to an orthogonal Grassmannian, and in the neutral fermionic Fock space they can be expressed as
\begin{equation}
	\tau_M (t) = 
	\Braket{
		\Gamma_+^B (t) M
	}, 
	\qquad\quad  
	\Gamma_+^B(t) = \exp \bigg( \sum_{k =0}^\infty t_{2k+1} \alpha^B_{2k+1} \bigg),
\end{equation}
with $ M \in \widehat{\mathrm{B}}_{\infty} \coloneqq \set{ e^{g_1} \cdots e^{g_N} | g_i \in \widehat{\mf{b}}_{\infty} } $. In other words, BKP tau-functions are the bosonic analogue through \cref{thm:BF:corresp} of elements in the orbit $\widehat{\mathrm{B}}_{\infty} \ket{0}$.

Like the KP hierarchy, the BKP hierarchy also has extensions to multiple components. Tau-functions of $2$-BKP are of the form \cite[page~5]{Tak06} (or \cite[equation~(4.9)]{Orl03} for the hypergeometric case),
\begin{equation}\label{eq:2BKP:tau}
	\tau_M (t,s) = 
	\Braket{
		\Gamma^B_+(t) M \Gamma_-^B(s)
	}, 
	\qquad\quad
	\Gamma_\pm^B(t) = \exp \bigg( \sum_{k =0}^\infty t_{2k+1} \alpha^B_{\pm(2k+1)} \bigg) .
\end{equation}
Their Hirota bilinear equation is given by \cite[equation~(2.1)]{Tak06}
\begin{equation}
	\oint \frac{dz}{2 \pi iz} e^{\xi (t' - t, z)} \tau ( t' - 2[z^{-1}], s') \tau (t + 2[z^{-1}], s )
	=
	\oint \frac{dz}{2 \pi iz} e^{\xi (s' - s,z)} \tau (t', s' - 2[z^{-1}] ) \tau ( t, s + 2[z^{-1}] ) ,
\end{equation}
where
\begin{equation}
	\xi (t, z) = \sum_{k = 0}^\infty t_{2k+1} z^{2k+1} , \quad [z^{-1}] = \Big( z^{-1}, \frac{z^{-3}}{3}, \frac{z^{-5}}{5}, \dotsc \Big) .
\end{equation}
Schur functions are tau-functions for KP, and 2-KP tau-functions can be written as bilinear combinations of Schur functions. For 2-BKP, this role is taken by Schur $Q$-functions, see \cite[proposition~1]{Orl03} for the hypergeometric case. Moreover, using that the algebra $ \mf{b}_\infty$ can be embedded in $ \mf{a}_\infty$ in two independent ways $ \iota_1$, $ \iota_2$, for each $ M \in \widehat{\mathrm{B}}_\infty$, one finds that \cite[equation~(6.7)]{JM83} (indicating by a superscript whether the $\tau$ function is of $KP$ or $BKP$)
\begin{equation}
	\tau^B_M (t_1, t_3, \dotsc )^2 = \tau^A_{\iota_1 (M) + \iota_2 (M)} (t_1, 0, t_3, 0, t_5, \dotsc ).
\end{equation}
This can be extended to 2-(B)KP as in the proof of \cite[theorem~1.1]{Lee20}.

One essential difference between 2-KP and 2-BKP is the grading or charge. The charge of the fermions in the A case allows for an extra lattice ($\Z$-valued) parameter $N$, which promotes 2-KP tau-functions to 2D Toda lattice $\tau_N$ functions. In the neutral, B-type, case, there is only a $ \Z/2\Z$ grading, and moreover $ \tau_0 = \tau_1$ \cite[page~972]{JM83}. Therefore, there is no natural extension from 2-BKP to some kind of ``2D B-Toda lattice".

\section{Localisation formula for spin Gromov--Witten invariants}
\label{sec:localisation}

The goal of this section is to apply the virtual localisation formula of Graber--Pandharipande \cite{GP99} to express spin GW invariants of $\P^1$ in terms of integrals over the fixed points of the torus action, that is integrals over the moduli space of stable curves. The resulting formula will express spin GW invariants of $\P^1$ in terms of double Hodge integrals arising as vertex terms.

\subsection{The torus action}
\label{subsec:torus:action}

We write $V = \C^2$. We consider the torus action on $V$ with weights $(0,1)$, i.e.
\begin{equation}
\begin{aligned}
	\C^{\ast} \times V & \longrightarrow V \\
	\bigl( \xi, (x_1, x_2) \bigr) & \longmapsto \bigl( x_1, \xi\cdot x_2 \bigr) .
\end{aligned}
\end{equation}
We denote by $\P^1$ the projectivisation of $V$. The torus action on $V$ induces an action on $\P^1$ and we denote by $0$ and $\infty$ the fixed points of this action. A lifting of the $\C^{\ast}$ action on $\P^1$ to the line bundle $\mc{O}(-k)$ is determined by the weights $[\alpha,\alpha+k]$ of the representations at the fibres $\mc{O}(-k)_{|0}$ and $\mc{O}(-k)_{|\infty}$ for some integer $\alpha$.

We write $H_{\C^{\ast}}^{*}(\pt) = \Q[t]$ for the equivariant cohomology ring of the point, where $t$ is the first Chern class of the standard representation. The equivariant cohomology of $\P^1$ is then the module over $\Q[t]$ given by:
\begin{equation}
	H_{\C^{\ast}}^{*}(\P^1)= \Q[h,t]/(h^2+th) ,
\end{equation}
where $h$ denotes the equivariant first Chern class of $\mc{O}(1)$. Then the equivariant Poincar\'e dual of $0$ and $\infty$ are given in this basis by
\begin{equation}
	\mathbf{0} = t\cdot 1 + h ,
	\qquad
	\boldsymbol{\infty} = h .
\end{equation}
Let $g,n,m,$ and $d$ be non-negative integers. The torus action on $\P^1$ provides a canonical torus action on the moduli space of stable maps. Thus, we will work in the equivariant cohomology of $\M_{g,n+m}(\P^1,d)$.

In the following, we will denote the generating series of the connected enumerative invariants of interest with a superscript $\circ$.

\begin{definition}\label{def:nPlusmPointGW}
	Define the connected, degree $d$, equivariant $(n+m)$-point function for spin GW invariants of genus $g$ of $(\P^1,\mc{O}(-1))$ as:
	\begin{equation}
		\sGWconn_{g,d}(z,w)
		\coloneqq
		(-1)^{g-1+d} 2^{3g-3+n+m+2d}
		\int_{[\M_{g,n+m}(\P^1, d)]^{\loc,\mc{O}(-1)}}
			\prod_{i =1}^n \frac{z_i \ev_i^{\ast}(\mathbf{0})}{1 - z_i \psi_i}
			\prod_{j =1}^m \frac{w_j \ev_{n+j}^{\ast}(\boldsymbol{\infty})}{1 - w_j \psi_{n+j}} .
	\end{equation}
	The unstable terms are defined as:
	\begin{equation}\label{eq:GWconn:unstable}
	\begin{gathered}
		\sGWconn_{0,0}() = \sGWconn_{1,0}() = 0 \\
		\sGWconn_{0,0}(z_1) = \frac{1}{2t z_1} ,
		\qquad
		\sGWconn_{0,0}(w_1) = - \frac{1}{2t w_1} \\
		\sGWconn_{0,0}(z_1,z_2) = \frac{z_1 z_2}{z_1 + z_2} ,
		\qquad
		\sGWconn_{0,0}(z_1,w_1) = 0 ,
		\qquad
		\sGWconn_{0,0}(w_1,w_2) = \frac{w_1 w_2}{w_1 + w_2} .
	\end{gathered}
	\end{equation}
	Define the corresponding genus generating series and its disconnected counterpart as:
	\begin{equation}\label{eqn:GWconngen}
	\begin{aligned}
		\sGWconn_d(z,w;u)
		&\coloneqq
		\sum_{g=0}^{\infty} u^{g-1} \sGWconn_{g,d}(z,w) , \\
		\sGWdisc_d(z,w;u)
		&\coloneqq
		\sum_{P} \frac{1}{|\Aut(P)|} \prod_{k=1}^{\ell(P)} \sGWconn_{d_k}\bigl( z_{I^{(k)}},w_{J^{(k)}};u \bigr),
	\end{aligned}
	\end{equation}
	where the $P$-sum is over all vectors $(I^{(k)}, J^{(k)}, d_k)_{k=1}^{\ell}$ such that $\bigsqcup_{k=1}^{\ell} I^{(k)} = \{ 1, \dotsc, n\}$, $\bigsqcup_{k=1}^{\ell} J^{(k)} = \{ 1, \dotsc, m\}$, and $\sum_{k=1}^{\ell} {d_k} = d$.
\end{definition}

The restriction of the function $\sGWdisc$ at $t=0$ recovers the generating series of spin GW invariants of $\P^1$:
\begin{equation}
	\sGWconn_{g,d}(z,w)\big|_{t=0}
	=
	(-1)^{g-1+d} 2^{3g-3+n+m+2d}
	\int_{[\M_{g,n+m}(\P^1, d)]^{\loc,\mc{O}(-1)}}
	\prod_{i = 1}^n \frac{z_i \ev_i^{\ast}([{\pt}])}{1 - z_i \psi_i}
	\prod_{j = 1}^m \frac{w_j \ev_{n+j}^{\ast}([{\pt}])}{1 - w_j \psi_{n+j}} .
\end{equation}

\subsection{Applying the localisation formula}
\label{subsec:apply:localisation}

We describe here the computation of the generating function $\sGWdisc_{d}(z,w;u)$ using the virtual localisation formula of Graber--Pandharipande \cite{GP99}. We recall that the fixed loci of the $\C^{\ast}$-action on $\M_{g,n+m}(\P^1, d)$ are indexed by decorated graphs in $\mc{G}_{g,n,m,d}$, that is the set of tuples
\begin{equation}
	\bigl(
		\Gamma,
		V(\Gamma) = V_0 \sqcup V_{\infty},
		\underline{d}=(d_e)_{e \in E(\Gamma)}
	\bigr) ,
\end{equation}
where:
\begin{itemize}
		\item $\Gamma$ is a pre-stable graph. For each vertex $v$, we denote by $g(v), n(v),$ and $\val(v)$ the genus, number of half-edges, and valency (number of adjacent edges) of $v$.

		\item The genus of $\Gamma$ is $g$, i.e: 
		\begin{equation}
			h_1(\Gamma) + \sum_{v \in V(\Gamma)} g(v) = g ,
		\end{equation}
		where $h_1(\Gamma)$ is the first Betti number of the graph and $g(v)$ are the evaluations of the genus function at every vertex.

		\item The partition of $V(\Gamma)$ makes $\Gamma$ bipartite. Moreover, there are $n$ legs attached to vertices in $V_{0}$, and $m$ legs attached to vertices in $V_{\infty}$.
		
		\item The degrees $d_e \in \Z_{+}$ satisfy:
		\begin{equation}
			\sum_{e \in E(\Gamma)} d_e = d ,
		\end{equation}
		where $E(\Gamma)$ is the set of edges of $\Gamma$.
\end{itemize}
The fixed locus $X_{\Gamma,\underline{d}}$ associated to this datum is then isomorphic to 
\begin{equation}
	X_{\Gamma,\underline{d}} \cong \prod_{v\in V(\Gamma)} \M_{g(v),n(v)}
	\lhook\joinrel\xrightarrow{\iota_{(\Gamma,\underline{d})}}
	\M_{g,n}(\P^1,d) ,
\end{equation}
with the convention that if $g(v)=0$ and $n(v)=1$, or $2$ (unstable vertices) then $\M_{g(v),n(v)}$ is a point. By~\cite{GP99} we have:
\begin{equation}\label{eqn:MgnP1vir}
	\bigl[ \M_{g,n}(\P^1,d) \bigr]^\vir
	=
	\sum_{\Gamma,\underline{d}} \iota_{(\Gamma,\underline{d}),\ast} \beta(\Gamma,\underline{d})
	\in A_{*}\bigl( \M_{g,n}(\P^1,d) \bigr)[t,t^{-1}] ,
\end{equation}
where the (Poincaré dual of the) class $ \beta(\Gamma,\underline{d}) $ is given by
\begin{equation}\label{eqn:betagraph}
\begin{split}
	\beta(\Gamma,\underline{d}) = \frac{1}{|\Aut(\Gamma,\underline{d})| \prod_{e\in E(\Gamma)} d_e }
	&
		\prod_{v\in V_0}  t^{g(v)-1} \Lambda \Bigl( -\frac{1}{t} \Bigr) \left(\prod_{e\mapsto v} t^{-d_e} \frac{d_e^{d_e}}{d_e!} \frac{t d_e}{t-d_e\psi_e}\right) \\ 
	\times &
		\prod_{v\in V_\infty} (-t)^{g(v)-1} \Lambda \Bigl( \frac{1}{t} \Bigr) \left(\prod_{e\mapsto v} (-t)^{-d_e} \frac{d_e^{d_e}}{d_e!} \frac{t d_e}{t+d_e\psi_e}\right).
\end{split}
\end{equation}
The convention for the contribution of unstable vertices will be given further. Moreover, along a fixed locus $(\Gamma,\underline{d})$, the different cohomology classes restrict as follows.
\begin{itemize}
	\item
	For all $i\leq n$, the class $\ev_i^{\ast}(\mathbf{0})=t$ if the leg indexed by $i$ is adjacent to a vertex in $V_0$, and vanishes otherwise. Conversely, if $j\geq n+1$, then $\ev_j^{\ast}(\boldsymbol{\infty})=-t$ if the leg indexed by $j$ is adjacent to a vertex in $V_{\infty}$, and vanishes otherwise.

	\item
	Let $k$ be a positive integer, and consider $ R^1\pi_*f^*\mc{O}(-k)$, the first derived pushforward of $\mc{O}(-k)$. It is a vector bundle of rank $g+kd-1$. To localise this class, we may choose a lifting of the torus action to the line bundle $\mc{O}(-k)$. The lifting depends only on the choice of the weights of the $\C^\ast$-action on the fibres of the line bundle at $0$ and $\infty$. As explained before, these weights are of the form $[\alpha,\alpha+k]$ for any choice of integer $\alpha$. Then, given such a lifting, the class $c_{\mathrm{top}}\left(R^1\pi_{\ast}f^{\ast}\mc{O}(-k)\right)$ restricts to a fixed locus of the $\C^\ast$-action as:
	\begin{equation}\label{eqn:general:c:top}
	\begin{split}
		\iota^{*}_{(\Gamma, \underline{d})} c_{\mathrm{top}}\left(R^1\pi_{\ast}f^{\ast}\mc{O}(-k)\right)
		= & 
		\prod_{e\in E(\Gamma)} \prod_{\ell=1}^{kd_e-1} \frac{(\ell + k d_e)(-\alpha t) + \ell (\alpha-k) t }{d_e}\\
		& \times
		\prod_{\mathclap{v\in V_0}} \;\; (k\alpha t)^{\val(v)-1} (\alpha t)^{g(v)} \Lambda \left(-\frac{1}{\alpha t}\right) \\
		& \times
		\prod_{\mathclap{v\in V_\infty}} \;\; (k(\alpha+k) t)^{\val(v)-1} ((\alpha +k)t)^{g(v)} \Lambda \left(-\frac{1}{(\alpha+k) t}\right).
	\end{split}
	\end{equation}
\end{itemize}
A common choice to compute such integrals is $\alpha = -k$ or $0$. Indeed, under such choices, the graphs with vertices of valency greater than $1$ over $\infty$ or $0$ give a trivial contribution. We make a different choice here as it does not provide a symmetric role to both sides of the equation, but it should provide interesting results on the structure of the function $\mathbb{G}$. 

We are now ready to apply the virtual localisation formula to compute the generating function $\sGWdisc_{d}(z,w;u)$.

\begin{theorem}\label{thm:GW:localised:to:H}
	For $d \ge 0$,
	\begin{equation}
		\sGWdisc_{d}(z,w;u)
		=
		\sum_{\mu \in \OP_d}
			\frac{2^{\ell (\mu)}}{\mf{z}_{\mu}} \frac{u^{-d}}{t^n (-t)^{m}}
			\prod_{k = 1}^{\ell(\mu)} \biggl( \frac{u}{t} \frac{(\frac{u\mu_k}{t})^{\mu_k'}}{\mu_k'!} \biggr)
			\dHdisc(\mu, tz; \tfrac{u}{t} ) 
			\prod_{k = 1}^{\ell(\mu)} \biggl( - \frac{u}{t} \frac{(- \frac{u\mu_k}{t})^{\mu_k'}}{\mu_k'!} \biggr)
			\dHdisc(\mu, -tw; -\tfrac{u}{t} ) ,
	\end{equation}
	where $\mf{z}_{\mu} = |\Aut(\mu)| \prod_{i=1}^{\ell(\mu)} \mu_i$ is the order of the stabiliser of any permutation of cycle type $\mu$ under conjugation.
\end{theorem}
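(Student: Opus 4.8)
The plan is to combine the two localisation inputs already assembled in this section. By \cref{eqn:locsplit} the localised class is $c_{g-1+d}(R^1\pi_*f^*\mc{O}(-1))$ capped with $[\M_{g,n+m}(\P^1,d)]^{\vir}$, and \cref{eqn:MgnP1vir} expands the virtual class as the graph sum with weights $\beta(\Gamma,\underline d)$ of \cref{eqn:betagraph}. By the projection formula the integrand on a fixed locus $X_{\Gamma,\underline d}\cong\prod_v\M_{g(v),n(v)}$ is the product of $\beta(\Gamma,\underline d)$ with the restriction of the top Chern class given in \cref{eqn:general:c:top}, so the whole computation reduces to an integral over a product of moduli of stable curves, organised vertex by vertex and edge by edge.

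The decisive move is the choice of equivariant lift. Since the localised integral does not depend on the choice of integer lift, the graph-sum formula built from \cref{eqn:betagraph,eqn:general:c:top} is a polynomial in $\alpha$ taking one and the same value at all integers, so by polynomiality we may evaluate each graph's contribution at the symmetric half-integer value $\alpha=-\tfrac12$, for which the fibre weights of $\mc{O}(-1)$ become $\mp\tfrac12$. Two things then happen. First, at a vertex the Hodge factor $\Lambda(\mp 1/t)$ coming from $\beta$ and the factor $\Lambda(\pm 2/t)$ coming from \cref{eqn:general:c:top} combine, after pulling out the power of $t$ fixed by cohomological degree, into the double Hodge class $\Lambda(2)\Lambda(-1)$ of the spin ELSV integrand of \cref{thm:spin:ELSV} over $V_0$, and into its $t\mapsto -t$ image over $V_\infty$; this sign flip is exactly the one relating the two arguments $tz$ and $-tw$ in the statement. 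Second, the edge factor of \cref{eqn:general:c:top} specialises, for $k=1$, to $\prod_{\ell=1}^{d_e-1}\frac{t(d_e-2\ell)}{2d_e}$, whose product vanishes as soon as $d_e$ is even (the factor $\ell=d_e/2$ is zero) and is nonzero for $d_e$ odd. This is the geometric origin of the restriction to odd partitions $\mu\in\OP_d$, matching the fact that spin Hurwitz numbers carry only odd ramification.

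With these simplifications the contribution of a graph factorises. Each stable vertex over $0$ of genus $g(v)$ carries the double Hodge class together with the descendent propagators $\frac{td_e}{t-d_e\psi_e}$, one per incident edge, and the external insertions $\frac{z_i\,t}{1-z_i\psi_i}$ produced by $\ev_i^*(\mathbf 0)=t$. Pulling the $t$'s out of the propagators and using that the degree-$D$ part of $\Lambda(-1/t)\Lambda(2/t)$ carries the factor $t^{-D}$, a single rescaling of the $\psi$-variables by $t$ turns these into the legs $\frac{\mu_k}{1-\mu_k\psi}$ and $\frac{tz_i}{1-tz_i\psi}$, that is, precisely the legs of $\dHconn_{g(v)}(\mu,tz)$ of \cref{def:dH:gen}; the vertices over $\infty$ produce in the same way $\dHconn_{g(v)}(\mu,-tw)$ with hierarchy parameter $-u/t$. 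Since $\Gamma$ is bipartite and every edge joins a vertex over $0$ to one over $\infty$, cutting all edges presents each disconnected graph as a $0$-side family and an $\infty$-side family glued along the common edge-degree profile $\mu$; summing over all graphs with fixed $\mu$ and all vertex distributions organises the two sides into the disconnected series $\dHdisc(\mu,tz;u/t)$ and $\dHdisc(\mu,-tw;-u/t)$, while the graph automorphisms $|\Aut(\Gamma,\underline d)|$ together with the factor $\frac{1}{\prod_e d_e}$ in $\beta$ assemble into $1/\mf{z}_\mu$.

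It remains to collect the scalar prefactors, and this is where the main work lies. The per-edge constants — the two copies of $\frac{d_e^{d_e}}{d_e!}$ from $\beta$ and the specialised product $(\tfrac{t}{2d_e})^{2\mu_k'}(-1)^{\mu_k'}((2\mu_k'-1)!!)^2$ for odd $d_e$ — must be shown, after using $(2\mu_k'-1)!!=(d_e-1)!/(2^{\mu_k'}\mu_k'!)$, to produce exactly the factors $\frac{u}{t}\frac{(u\mu_k/t)^{\mu_k'}}{\mu_k'!}$ and $-\frac{u}{t}\frac{(-u\mu_k/t)^{\mu_k'}}{\mu_k'!}$ of the statement, the powers of $u$ being fixed by the genus count $g-1=\sum_v(g(v)-1)+h_1(\Gamma)$. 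I expect the genuine obstacle to be purely a matter of normalisation bookkeeping: reconciling the powers of $2$ hidden in $\sGWconn$ (the $2^{3g-3+n+m+2d}$ of \cref{def:nPlusmPointGW}) and in $\dHconn$ (the $2^{2g-2+n}$ of \cref{def:dH:gen}) with the $2^{\ell(\mu)}$ and the $2^{-4\mu_k'}$ emitted by the edges, and checking that the powers of $t$ coming from the rescaling, the Hodge classes, the evaluation classes $\ev^*(\mathbf 0)=t$ and $\ev^*(\boldsymbol\infty)=-t$, and the $(u/t)^{g-1}$ grading add up to the overall $\frac{u^{-d}}{t^n(-t)^m}$. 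None of these steps is conceptually hard once the symmetric lift is in place, but the simultaneous tracking of the four gradings in $2$, $t$, $u$, and sign across vertices, edges, and the disconnected factorisation is the part that demands the most care.
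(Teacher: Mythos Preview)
Your proposal is correct and follows essentially the same route as the paper: both apply virtual localisation to \cref{eqn:locsplit}, make the symmetric choice $\alpha=-\tfrac12$ in \cref{eqn:general:c:top} (justified by polynomiality in $\alpha$), observe that the edge factor $\prod_{\ell=1}^{d_e-1}\frac{(d_e-2\ell)t}{2d_e}$ kills even $d_e$, recognise the emerging double Hodge class at each vertex, and then carry out the bookkeeping of powers of $2$, $t$, $u$, and sign via the global graph constraints before passing to disconnected invariants. The paper organises the scalar tracking slightly differently---writing down explicit $\cont(e)$ and $\cont(v)$ and then summing the exponents of $2$, $-1$, $t$ separately using $\sum_v(g(v)-1)=g-1-|E|$, $\sum_v\val(v)=2|E|$, $\sum_e d_e=d$, and $|E|\equiv d\pmod 2$---but this is exactly the ``simultaneous tracking of the four gradings'' you flag as the careful part.
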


\begin{proof}
	Let us start from the connected generating series $\sGWconn_{g,d}(z,w)$. Expressing the localised cycle through \cref{eqn:locsplit} and applying \cref{eqn:MgnP1vir} and the projection formula, we obtain
	\begin{multline*}
		\sGWconn_{g,d}(z,w)
		=
		(-1)^{g-1+d} 2^{3g-3+n+m+2d}
		\sum_{\Gamma,\underline{d}} \int_{X_{\Gamma,\underline{d}}}
			\beta(\Gamma,\underline{d}) \\
			\times
			\prod_{i = 1}^n \frac{z_i \iota^{*}_{(\Gamma, \underline{d})} \ev_i^{\ast}(\mathbf{0})}{1 - z_i \psi_i}
			\prod_{j = 1}^m \frac{w_j \iota^{*}_{(\Gamma, \underline{d})} \ev_{n+j}^{\ast}(\boldsymbol{\infty})}{1 - w_j \psi_{n+j}}
			\iota^{*}_{(\Gamma, \underline{d})} c_{\mathrm{top}}\left( R^1\pi_{\ast}f^{\ast}\mc{O}(-1) \right) .
	\end{multline*}
	For $c_{\mathrm{top}}( R^1\pi_{\ast}f^{\ast}\mc{O}(-1) )$, we use \cref{eqn:general:c:top} with the choices $k = 1$ and $\alpha=-1/2$, which yields a more symmetric formula. Note that this choice of $\alpha$ is not integral, and thus does not correspond to a lifting of the torus action to $\mc{O}(-1)$. However, the class $c_{\mathrm{top}}( R^1\pi_{\ast}f^{\ast}\mc{O}(-1) )$ depends polynomially on $\alpha$ for each fixed locus, thus one may extend the expression of spin GW invariants to all $\alpha \in \C$. For $k = 1$ and $\alpha=-1/2$, \cref{eqn:general:c:top} specialises to
	\begin{equation*}\label{eqn:special:c:top}
		\begin{split}
			\iota^{*}_{(\Gamma, \underline{d})} c_{g-1+d}\left( R^1\pi_{\ast}f^{\ast}\mc{O}(-1) \right)
			= & \; \hphantom{\times}
			\prod_{\mathclap{e\in E(\Gamma)}} \;\; \prod_{\ell=1}^{d_e-1} \frac{( d_e - 2 \ell )t}{2d_e}\\
			& \; \times
			\prod_{\mathclap{v\in V_0}} \;\; \Bigl(-\frac{t}{2}\Bigr)^{g(v) +\val(v)-1} \Lambda \Bigl(\frac{2}{t}\Bigr) \\
			& \; \times
			\prod_{\mathclap{v\in V_\infty}} \;\; \Bigl(\frac{t}{2}\Bigr)^{g(v) +\val(v)-1} \Lambda \Bigl(-\frac{2}{t}\Bigr) .
		\end{split}
	\end{equation*}
	This choice of $\alpha$ imposes that all graphs with at least one edge of even degree do not contribute, as some factor of the form $(d_e - 2\ell)$ vanishes. Combining the above equation with \cref{eqn:betagraph} for $\beta(\Gamma,\underline{d})$ and the pullbacks of the evaluations over $0$ and $\infty$, we obtain 
	\begin{multline}\label{eq:Gconn:Gamma}
		\sGWconn_{g,d}(z,w)
		= (-1)^{g-1+d} 2^{3g-3+n+m+2d}
		\sum_{\Gamma, \underline{d}} \frac{1}{|\Aut(\Gamma, \underline{d})| \prod_e d_e}  \\
		\times
			\prod_{v \in V_0} \biggl(\cont(v) \prod_{e \mapsto v} \cont(e) \biggr)
			\prod_{v \in V_\infty} \biggl(\cont(v) \prod_{e \mapsto v} \cont(e) \biggr) .
	\end{multline}
	The contributions of $(\Gamma,\underline{d})$ are computed as follows.
	\begin{itemize}
		\item
		The contribution of an half-edge $e$ incident to $v\in V_0$ is 
		\begin{equation*}
			\cont(e) = \biggl( \frac{1}{2} \biggr)^{d_e - 1} \frac{d_e^{d'_e}}{d'_e!} t^{-d'_e-1} .
		\end{equation*}
		The contribution of an half-edge $e$ incident to $v \in V_\infty$ is obtained by $t \mapsto -t$.

		\item
		The contribution of a stable vertex $v \in V_0$ is
		\begin{equation*}
			\begin{split}
				\cont(v)
				& =
				\Bigl( -\frac{t}{2} \Bigr)^{\val(v)-1}
				\frac{(-t)^{g(v)}(t/2)^{g(v)}}{t^{3g(v)-2+n(v)}} \\
				& \qquad\qquad \times
					\int_{\M_{g(v),n(v)}}
					\Lambda(2)\Lambda(-1)\prod_{e\mapsto v}
					\frac{d_e}{1 - d_e \psi_e}
					\prod_{i\mapsto v} \frac{tz_i}{1 - tz_i \psi_i} \\
				& =
				\frac{ (-1)^{g(v) - 1 + \val(v)} }{t^{g(v)-1+n(v)-\val(v)} 2^{3g(v)- 3 + n(v) + \val(v)}}
				2^{2g(v) - 2 + n(v)} \\
				& \qquad\qquad \times
				\int_{\M_{g(v),n(v)}}
					\Lambda(2)\Lambda(-1)
					\prod_{e\mapsto v} \frac{d_e}{1 - d_e \psi_e}
					\prod_{i\mapsto v} \frac{tz_i}{1 - t z_i \psi_i} \\
				& =
				\frac{(-1)^{g(v)-1+\val(v)}}{2^{3g(v)-3+n(v)+\val(v)} t^{g(v)-1+n(v)-\val(v)}} 
				\dHconn_{g(v)}(d_e,t z_i) .
			\end{split}
		\end{equation*}
		The contribution of a stable vertex $v \in V_\infty$ is obtained by $t \mapsto -t$.

		\item
		The contributions at unstable vertices are given by the same expressions, if we use our convention of $\dHconn_0(z_1)$ and $\dHconn_0(z_1,z_2)$ given in \cref{eqn:dH:unstable}.
	\end{itemize}
	We can check the above formula for the unstable $1$-point and $2$-point functions, as given in \cref{eq:GWconn:unstable}. The $1$-point functions with a single insertion over $0$ or $\infty$ receive a contribution from a single graph respectively of the form
	\begin{equation*}
		\begin{tikzpicture}[baseline]
			\draw (-1,0) -- (0,0);
			\node at (-1,0) [above] {$z_1$};
			\draw[fill=white] (0,0) circle (4pt);
			\node at (0,0) {\scriptsize$0$};
			\node at (.4,0) {,};

			\begin{scope}[xshift=3cm]
				\draw (1,0) -- (0,0);
				\node at (1,0) [above] {$w_1$};
				\draw[fill=black] (0,0) circle (4pt);
				\node[white] at (0,0) {\scriptsize$0$};
				\node at (1.4,0) {.};
			\end{scope}
		\end{tikzpicture}
	\end{equation*}
	Vertices over $0$ are depicted in white and vertices over $\infty$ are depicted in black. In these cases, the right-hand side of \cref{eq:Gconn:Gamma} gives
	\begin{equation*}
		\sGWconn_{0,0}(z_1) = \dHconn_0(tz_1) = \frac{1}{2t z_1} ,
		\qquad\qquad
		\sGWconn_{0,0}(w_1) = \dHconn_0(-tw_1) = -\frac{1}{2t w_1} ,
	\end{equation*}
	which agree with \cref{eq:GWconn:unstable}. As for the $2$-point functions, there is no graph contributing to the mixed $2$-point function $\sGWconn_{0,0}(z_1,w_1)$ since we are in degree zero. On the other hand, the graphs contributing to the $2$-point functions with two insertions over $0$ or $\infty$ receive a contribution from a single graph respectively of the form
		\begin{equation*}
		\begin{tikzpicture}[baseline]
			\draw (160:1) -- (0,0);
			\node at (160:1) [above] {$z_1$};
			\draw (-160:1) -- (0,0);
			\node at (-160:1) [below] {$z_2$};
			\draw[fill=white] (0,0) circle (4pt);
			\node at (0,0) {\scriptsize$0$};
			\node at (.4,0) {,};

			\begin{scope}[xshift=3cm]
				\draw (20:1) -- (0,0);
				\node at (20:1) [above] {$w_1$};
				\draw (-20:1) -- (0,0);
				\node at (-20:1) [below] {$w_2$};
				\draw[fill=black] (0,0) circle (4pt);
				\node[white] at (0,0) {\scriptsize$0$};
				\node at (1.4,0) {.};
			\end{scope}
		\end{tikzpicture}
	\end{equation*}
	As before, the right-hand side of \cref{eq:Gconn:Gamma} gives
	\begin{align*}
		\sGWconn_{0,0}(z_1,z_2) &= \frac{1}{t} \dHconn_0(tz_1,tz_2) = \frac{z_1 z_2}{z_1 + z_2} , \\
		\sGWconn_{0,0}(w_1,w_2) &= -\frac{1}{t} \dHconn_0(-tw_1,-tw_2) = \frac{w_1 w_2}{w_1 + w_2} .
	\end{align*}
	which agree with \cref{eq:GWconn:unstable}.

	We now take all these contributions together, and make the genus generating series on both sides. For this, we use the following global constraints:
	\begin{align*}
		&
		\sum_{v \in V(\Gamma)} \bigl( g(v) - 1 \bigr) 
		= g - 1 - |E(\Gamma)| ,
		&&
		\sum_{v \in V(\Gamma)} \val(v)
		= 2 |E(\Gamma)| ,
		\\
		&
		\sum_{v \in V(\Gamma)} \bigl( n(v) - \val(v) \bigr)
		= n + m ,
		&&
		\sum_{e \in E(\Gamma)} d_e
		=
		d .
	\end{align*}
	Collecting the powers of $2$ from all half-edges and vertices, we find
	\begin{equation*}
		- \sum_{v \in V(\Gamma)} \bigl(
			3g(v) - 3 + n(v) + \val(v)
		\bigr)
		-2 \sum_{e \in E(\Gamma)} \bigl( d_e - 1 \bigr)
		=
		- (3g - 3 + n + m + 2d) + |E(\Gamma)| .
	\end{equation*}
	In particular, the powers of $2$ cancel out with the global prefactor $2^{3g-3+n+m+2d}$, except for a factor $2^{|E(\Gamma)|}$. Similarly for the powers of $-1$ that are not paired with the equivariant parameter $t$:
	\begin{equation*}
		\sum_{v \in V(\Gamma)} \bigl( g(v) - 1 + \val(v) \bigr)
		=
		g - 1 + |E(\Gamma)|
		\equiv
		g - 1 + d \pmod{2}.
	\end{equation*}
	In the last equation, we used the fact that $|E(\Gamma)| \equiv d \pmod{2}$, since the edge degrees are all odd. In particular, all powers of $-1$ cancel out with the global prefactor $(-1)^{g-1+d}$.

	To conclude, let us analyse the powers of $t$. For each vertex $v$ over $0$ we can pair $-(g(v) - 1)$ powers of $t$ with the genus parameter $u$, leaving out a total of $- \sum_{v \in V_0} ( n(v) - \val(v) ) = - n$. Similarly for vertices over $\infty$, exchanging $t \mapsto -t$. Collecting all together, we have
	\begin{multline*}
			\sGWconn_{d}(z,w; u)
			=
			\sum_{\Gamma, \underline{d}}
				\frac{2^{|E(\Gamma)|}}{|\Aut(\Gamma,\underline{d})| \prod_{e} d_e }
				\frac{u^{|E(\Gamma)|}}{t^n (-t)^m}
			\\
				\prod_{v \in V_0}
				\dHconn \bigl( \{ d_e\}_{e \mapsto v}, \{ tz_i\}_{ i \mapsto v} ; \tfrac{u}{t} \bigr)
				\prod_{e \mapsto v} \frac{1}{t} \frac{( \frac{1}{t} d_e )^{d'_e}}{d'_e!}
			\\
				\prod_{v \in V_\infty}
				\dHconn \bigl( \{d_e\}_{e \mapsto v}, \{ -tw_j\}_{j \mapsto v} ; - \tfrac{u}{t} \bigr)
				\prod_{e \mapsto v} \frac{1}{-t} \frac{( - \frac{1}{t} d_e )^{d'_e}}{d'_e!} .
	\end{multline*}
	We can distribute the factor $u^{|E(\Gamma)|}$ to the half-edges using the relation
	\begin{equation*}
		|E(\Gamma)| = 2 \sum_{e \in E(\Gamma)} \bigl( d_e' + 1 \bigr) - d .
	\end{equation*}
	To change from connected to disconnected counts, we may gather all double Hodge functions over $0$ together, and similarly for $\infty$. Then the sum collapses to graphs with one vertex over $0$ and one over $\infty$, and the only remaining information is the splitting of $d$, which is encoded in an odd partition $\mu$. The factor $|\Aut(\Gamma,\underline{d})| \prod_{e \in E(\Gamma)} d_e$ collapses to $\mf{z}_\mu$, and we simply note that the number of edges coincides with the length of the partition $\mu$.
\end{proof}

\section{Double Hodge integrals in operator formalism}
\label{sec:dH:operator:form}

Employing Fock space techniques in Hurwitz theory dates back to the foundational paper by Okounkov \cite{Oko00}, where the Toda equation for Hurwitz numbers was proved for the first time. Working in Hurwitz theory via Fock space formalism has the significant advantage of providing an explicit representation of operators for computations. In the spirit of \cite[section~2]{OP06b}, this section aims to transfer the operator formalism from the Hurwitz side to the GW side, albeit in the spin setting. The methods for computing the commutation relation needed are analogous to those developed in \cite{OOP20}.

More precisely, the main goal is to express the double Hodge integrals as correlators in the Fock space:
\begin{equation}\label{eqn:dH:corr}
		u^{n} \dHdisc(z_1,\dots,z_n;u) = \Braket{ \prod_{i=1}^n \mc{B} (z_i, uz_i) }
\end{equation}
for some specific operators $\mc{B}$. To this end, we start from the spin ELSV formula \labelcref{eqn:spin:ELSV} which equates the double Hodge integrals with spin Hurwitz numbers for $z_i = \mu_i$ odd positive integers. For these numbers, we know an expression as correlators, specifically as correlators of a product of the $\mc{B}$-operators. We need to extend the definition of the $\mc{B}$-operators from odd integral parameters to complex ones, and show that the associated correlators are rational functions of the parameters. This will allow us to conclude \cref{eqn:dH:corr} via a density argument.

\subsection{The definition of \texorpdfstring{$\mc{B}$}{B}-operators}
\label{subsec:B:op}

\begin{definition}\label{def:B-ops}
	Define the operators
	\begin{equation}
		\mc{B}(z, u z)
		=
		e^{u \frac{z^3}{12}}
		\sum_{k \in \Z} \sum_{j \ge k'+1}
				\frac{\Gamma(z'+1) \Gamma(z + 2j)}{\Gamma(z + k + 1) \Gamma(z' + j + 1)} (u z)^j
				[w^{2j-1}]
					\mc{S}(w)^{z}
					\varsigma(w)^k
					\mc{E}^B_k(w)
		,
	\end{equation}
	where we recall the notation $z' = \frac{z-1}{2}$, the functions $\mc{S}(z) = 2\frac{\sinh (\frac{z}{2})}{z}$, $\varsigma(z) = 2\sinh \left(\frac{z}{2}\right)$, and the B-Okounkov--Pandharipande operators $\mc{E}^B_k$ defined in \labelcref{eqn:B:OP}.
\end{definition}

The main motivation for the above definition is the following result expressing the specialisation of $\mc{B}$ to positive odd integer values of $z$ in terms of bosons and the first completed cut-and-join operator (recall \cref{def:B:operators} for the definition of such operators).

\begin{proposition}\label{prop:B:operators:odd:integers}
	For $\mu \in \Z_+^{\textup{odd}}$, the operator $\mc{B}(\mu,u\mu)$ is the conjugation of $\alpha^B_{-\mu}$ by $e^{\alpha^B_1} e^{u\frac{\mc{F}^B_{3}}{3}}$, up to the non-polynomial part:
	\begin{equation}
		\mc{B}(\mu, u\mu)
		=
		\frac{\mu'!}{(u \mu)^{\mu'}}
		e^{\alpha^B_1} e^{u\frac{\mc{F}^B_{3}}{3}} \alpha^B_{-\mu} e^{-u\frac{\mc{F}^B_{3}}{3}} e^{-\alpha^B_1}.
	\end{equation}
\end{proposition}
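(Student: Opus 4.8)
The plan is to compute the double conjugation directly from the commutation relations of \cref{prop:OP:VEV}, in the dressing-operator spirit of \cite{OOP20}. Since $\alpha^B_{-\mu} = \mc{E}^B_{-\mu}(0)$, I would introduce the operator-valued formal series
\[
	\Psi_\mu(w,u) \coloneqq e^{\alpha^B_1}\, e^{u\mc{F}^B_3/3}\, \mc{E}^B_{-\mu}(w)\, e^{-u\mc{F}^B_3/3}\, e^{-\alpha^B_1},
\]
so that the claim becomes the identity $\Psi_\mu(0,u) = \frac{(u\mu)^{\mu'}}{\mu'!}\,\mc{B}(\mu,u\mu)$. The strategy is to evolve $\mc{E}^B_{-\mu}(w)$ through the two conjugations as functions of the spectral parameter $w$, and only set $w=0$ at the very end.

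First I would treat the inner conjugation by $e^{u\mc{F}^B_3/3}$. Writing $\mc{F}^B_3 = 3!\,[z^3]\hat{\mc{E}}^B_0(z)$ and using \cref{eqn:spin:OP:comm} with $m=0$, $n=-\mu$ (the scalar correction of $\mc{E}^B_0$ drops out of the bracket), one finds $[\mc{E}^B_0(z),\mc{E}^B_{-\mu}(w)] = \tfrac12\varsigma(\mu z)\bigl(\mc{E}^B_{-\mu}(z+w)+\mc{E}^B_{-\mu}(z-w)\bigr)$. Taylor-expanding in $z$, invoking the parity relation $\mc{E}^B_{-\mu}(-w)=\mc{E}^B_{-\mu}(w)$, and extracting $[z^3]$ gives
\[
	\Bigl[\tfrac{\mc{F}^B_3}{3},\,\mc{E}^B_{-\mu}(w)\Bigr] = \mu\,\partial_w^2\mc{E}^B_{-\mu}(w) + \tfrac{\mu^3}{12}\,\mc{E}^B_{-\mu}(w).
\]
Hence $G(w,u)\coloneqq e^{u\mc{F}^B_3/3}\mc{E}^B_{-\mu}(w)e^{-u\mc{F}^B_3/3}$ solves the heat-type equation $\partial_u G = \mu\,\partial_w^2 G + \tfrac{\mu^3}{12}G$ with $G(w,0)=\mc{E}^B_{-\mu}(w)$, whose unique formal solution is $G(w,u)=e^{u\mu^3/12}\,e^{u\mu\,\partial_w^2}\mc{E}^B_{-\mu}(w)$. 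This is where the prefactor $e^{u z^3/12}$ of \cref{def:B-ops} originates. Next I would handle the outer conjugation by $e^{\alpha^B_1}$: as $\alpha^B_1$ is independent of $w$, it commutes with $\partial_w$ and with scalars, so $\Psi_\mu$ satisfies the very same heat equation with initial datum $\Phi(w)\coloneqq e^{\alpha^B_1}\mc{E}^B_{-\mu}(w)e^{-\alpha^B_1}$, giving $\Psi_\mu(w,u)=e^{u\mu^3/12}e^{u\mu\,\partial_w^2}\Phi(w)$. Applying \cref{eqn:spin:OP:comm} with $m=1$, $z=0$ (again the parity relation collapses the two terms) yields $[\alpha^B_1,\mc{E}^B_k(w)]=\varsigma(w)\mc{E}^B_{k+1}(w)$ for every $k$, so that $\ad_{\alpha^B_1}^l\mc{E}^B_{-\mu}(w)=\varsigma(w)^l\mc{E}^B_{-\mu+l}(w)$ and therefore $\Phi(w)=\sum_{k\ge-\mu}\frac{\varsigma(w)^{k+\mu}}{(k+\mu)!}\mc{E}^B_k(w)$.

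Finally I would assemble and re-index. Using $[e^{u\mu\partial_w^2}\Phi]_{w=0}=\sum_{q\ge0}\frac{(u\mu)^q(2q)!}{q!}[w^{2q}]\Phi(w)$ and the relation $\varsigma(w)=w\,\mc{S}(w)$, so that $\mc{S}(w)^\mu\varsigma(w)^k=w^k\mc{S}(w)^{\mu+k}$, I would write $[w^{2q}]\Phi(w)=\sum_{k\ge-\mu}\frac{1}{(k+\mu)!}[w^{2q-k-\mu}]\mc{S}(w)^{\mu+k}\mc{E}^B_k(w)$ and substitute $q=j+\mu'$, which turns the extracted coefficient into $[w^{2j-1}]\mc{S}(w)^\mu\varsigma(w)^k\mc{E}^B_k(w)$. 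The factorials $(2q)!=\Gamma(\mu+2j)$, $q!=\Gamma(\mu'+j+1)$ and $(k+\mu)!=\Gamma(\mu+k+1)$ then reconstruct exactly the Gamma-ratio of \cref{def:B-ops}, while the surplus $(u\mu)^q=(u\mu)^j(u\mu)^{\mu'}$ together with $\Gamma(\mu'+1)=\mu'!$ is precisely what the prefactor $\frac{\mu'!}{(u\mu)^{\mu'}}$ removes. The constraint $j\ge k'+1$ appears because $\mc{S}(w)^{\mu+k}\mc{E}^B_k(w)$ is a power series with no negative powers of $w$, forcing $2j-1-k\ge0$, and the terms with $k<-\mu$ vanish since $1/\Gamma(\mu+k+1)=0$. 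The hard part is not any single step but the careful bookkeeping of this last re-indexing, together with verifying the edge behaviour of the $k=0$ summand, where $\mc{E}^B_0$ carries the scalar $\qoppa(w)/\varsigma(w)$ correction; all the conceptual weight sits in the two commutator computations and the heat equation they produce.
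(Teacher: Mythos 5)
Your argument is correct in substance, but it takes a genuinely different route from the paper's proof. The paper never touches the spectral parameter: it imports from \cite{GKL21} the expansion \cref{eqn:conj:on:basis} of the conjugated operator on the basis $E^B_{j,k}$, with coefficients written as backward difference operators $\Delta^j$ applied to powers of $Q(l,\mu)$, then proves the combinatorial \cref{lem:bckwrds:Q} (Stirling-number identities converting $\Delta^j$ into coefficient extraction against $\varsigma(w)^j e^{w(l-j/2)}$), and finally resums and reindexes to land on \cref{def:B-ops}. You instead work entirely from \cref{prop:OP:VEV}: the commutator $\bigl[\mc{F}^B_3/3,\mc{E}^B_{-\mu}(w)\bigr]=\mu\,\partial_w^2\mc{E}^B_{-\mu}(w)+\tfrac{\mu^3}{12}\mc{E}^B_{-\mu}(w)$ (your determinant and parity bookkeeping is right), the resulting heat equation in $(w,u)$ with formal solution $e^{u\mu^3/12}e^{u\mu\partial_w^2}$, and the $\ad$-exponential $e^{\alpha^B_1}\mc{E}^B_{-\mu}(w)e^{-\alpha^B_1}=\sum_{k\ge-\mu}\tfrac{\varsigma(w)^{k+\mu}}{(k+\mu)!}\mc{E}^B_k(w)$, before setting $w=0$; the final substitution $q=j+\mu'$ and the Gamma-function bookkeeping then coincide with the paper's last step. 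Your route is self-contained given \cref{prop:OP:VEV}, explains conceptually where the prefactor $e^{uz^3/12}$ and the Gamma ratios of \cref{def:B-ops} come from, and is in the same ODE/dressing spirit that the paper itself only deploys later, for \cref{thm:Bdressing}; the paper's route is shorter because the hard expansion is a citation.

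One point you should actually finish rather than flag: the $k=0$ summand. Your claim that $\mc{S}(w)^{\mu+k}\mc{E}^B_k(w)$ has no negative powers of $w$ fails exactly at $k=0$, where $\mc{E}^B_0(w)$ carries the scalar $\qoppa(w)/\varsigma(w)=\tfrac{1}{2w}+O(w)$; there the constraint is $j\ge 0$ rather than $j\ge k'+1=\tfrac12$, and the extra $(k,j)=(0,0)$ term contributes $e^{u\mu^3/12}/(2\mu)$ after multiplying by the prefactor $\mu'!/(u\mu)^{\mu'}$. This term is genuinely present in the conjugation: for $\mu=1$ a direct Fock-space computation gives
\begin{equation*}
	\Braket{ e^{\alpha^B_1} e^{u\frac{\mc{F}^B_{3}}{3}} \alpha^B_{-1} e^{-u\frac{\mc{F}^B_{3}}{3}} e^{-\alpha^B_1} }
	= \tfrac{1}{2}\,e^{u/3},
\end{equation*}
which is reproduced by $\Braket{\mc{B}(1,u)}$ only if the $(k,j)=(0,0)$ term is included, so \cref{def:B-ops} must be read as containing it. This is not a defect of your method — the paper's own proof wrestles with the same wrinkle (the $\delta_{k,0}/2$ shift in the lower bound for $s$, and the $e^{w/2}\mapsto\cosh(w/2)$ replacement that produces the correction of $\mc{E}^B_0$) — but as written your derivation of the summation range is literally wrong at $k=0$ and needs this short verification to close.
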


The proof is obtained by expanding both sides on the basis $E_{i,j}^B$ from \cref{def:b:inf}. Such expansion for the left hand-side was computed in \cite[proposition 7.8]{GKL21}, and we recall it here for the reader's convenience. For this convention for of the basis elements $E^B_{j,k}$, we refer to \cite[section~12.1]{Gia21}.

\begin{lemma}[{\cite{GKL21}}]
	Let $\Delta$ be the backward difference operator, and denote $Q(l,\mu) = \frac{(l+\mu)^3 - l^3}{3\mu}$. Then
	\begin{multline}\label{eqn:conj:on:basis}
		\frac{\mu'!}{(u \mu)^{\mu'}}
		e^{\alpha^B_1} e^{u\frac{\mc{F}^B_{3}}{3}} \alpha^B_{-\mu} e^{-u\frac{\mc{F}^B_{3}}{3}} e^{-\alpha^B_1} = \\
		=
		\sum_{s \in \Z}
			\frac{(u \mu)^s}{(\mu'+1)_s}
			\Biggl(
			\sum_{j \ge 0} \sum_{l \ge \frac{j+1-\mu}{2}}
				(-1)^l
				\frac{\Delta^j}{j!}
				Q(l,\mu)^{\mu'+s}
			E^B_{l+\mu-j,l}
		+
		\frac{1}{2} \frac{\Delta^{\mu-1}}{\mu!}
			Q(l,\mu)^{\mu'+s} \big|_{l=0}
			\Id
		\Biggr) .
	\end{multline}
\end{lemma}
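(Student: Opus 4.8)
The plan is to prove the identity by a direct fermionic computation on the basis $\{E^B_{j,k}\}$, conjugating $\alpha^B_{-\mu}$ one factor at a time. First I would expand the bosonic operator using \cref{eqn:B:boson}: for $\mu$ odd one has $\alpha^B_{-\mu}=\sum_{l\ge -\mu'}(-1)^l E^B_{l+\mu,l}$, where the lower bound $-\mu'=\tfrac{1-\mu}{2}$ is forced by the condition $l>-\mu/2$. The two conjugating operators act very differently: $e^{u\mc{F}^B_3/3}$ is diagonal in this basis and merely rescales each $E^B_{l+\mu,l}$, whereas $e^{\alpha^B_1}$ is a pure index shift that spreads each term into a lower tail. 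I would treat them innermost-first.

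For the diagonal step, I would establish the single commutator $[\mc{F}^B_3,\phi_a]=a^3\phi_a$ for all $a\in\Z$, a short calculation from the CAR \cref{eqn:CAR:B} and the definition \cref{eqn:B:power:sum}. Since $\ad_{\mc{F}^B_3}$ is a derivation, $[\mc{F}^B_3,E^B_{l+\mu,l}]=\bigl((l+\mu)^3-l^3\bigr)E^B_{l+\mu,l}=3\mu\,Q(l,\mu)\,E^B_{l+\mu,l}$, so conjugation by $e^{u\mc{F}^B_3/3}$ multiplies the $l$-th term by $e^{u\mu Q(l,\mu)}$. At this stage $e^{u\mc{F}^B_3/3}\,\alpha^B_{-\mu}\,e^{-u\mc{F}^B_3/3}=\sum_{l\ge -\mu'}(-1)^l e^{u\mu Q(l,\mu)}\,\phi_{l+\mu}\phi_{-l}$.

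For the shift step, I would likewise prove $[\alpha^B_1,\phi_a]=\phi_{a-1}$ for all $a$, whence $\ad_{\alpha^B_1}^{i}\phi_a=\phi_{a-i}$ and $e^{\alpha^B_1}\phi_a e^{-\alpha^B_1}=\sum_{i\ge 0}\tfrac{1}{i!}\phi_{a-i}$. Applying this to both fermions of each term and reindexing by $l'=l+i'$ (the new lower index) and $j=i+i'$ (the total shift), the factor $\phi_{l+\mu-i}\phi_{-l-i'}$ becomes $E^B_{l'+\mu-j,\,l'}$, and its coefficient collapses to a binomial sum that is precisely a finite difference, $(-1)^{l'}\tfrac{1}{j!}\sum_{i'=0}^{j}\binom{j}{i'}(-1)^{i'}e^{u\mu Q(l'-i',\mu)}=(-1)^{l'}\tfrac{\Delta^{j}}{j!}\,e^{u\mu Q(l',\mu)}$. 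Finally, multiplying by the prefactor $\tfrac{\mu'!}{(u\mu)^{\mu'}}$ and Taylor-expanding $e^{u\mu Q}=\sum_{t\ge 0}\tfrac{(u\mu Q)^{t}}{t!}$ with $t=\mu'+s$ turns the exponential into $\sum_{s}\tfrac{(u\mu)^{s}}{(\mu'+1)_s}\,Q(l',\mu)^{\mu'+s}$, reproducing the operator part of the claimed formula term by term.

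The main obstacle is the scalar $\Id$ contribution and, relatedly, the sharp summation range $l\ge\tfrac{j+1-\mu}{2}$. The clean finite-difference computation above is valid only away from the diagonal; near it the two indices of $E^B_{l'+\mu-j,l'}$ collide, and re-expressing the result in the standard antisymmetric basis (via $\hat E^B_{j,k}=-\hat E^B_{-k,-j}$) forces a normal-ordering correction. This correction is exactly the $2$-cocycle of the central extension $\hat{\mf{b}}_\infty$ and is controlled by the vacuum expectations of \cref{lem:VEV:Fock:basis:B}; in particular the value $u[0]=\tfrac12$ is the source of the overall $\tfrac12$ in the central term. I would fold the near-diagonal contributions, track how the reflection lowers the order of the difference operator by one, and evaluate at $l=0$, thereby producing the $\tfrac12\,\tfrac{\Delta^{\mu-1}}{\mu!}\,Q(l,\mu)^{\mu'+s}\big|_{l=0}\,\Id$ term while simultaneously restricting the operator sum to $l\ge\tfrac{j+1-\mu}{2}$. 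This bookkeeping of the cocycle is the only genuinely delicate point; everything else is the derivation-and-reindexing calculation sketched above, in agreement with \cite[proposition~7.8]{GKL21}.
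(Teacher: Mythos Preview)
The paper does not actually prove this lemma; it is cited from \cite[proposition~7.8]{GKL21} and merely restated for the reader's convenience, so there is no in-paper argument to compare against. Your direct fermionic computation --- conjugating $\alpha^B_{-\mu}$ first by the diagonal operator $e^{u\mc{F}^B_3/3}$ to produce the factor $e^{u\mu Q(l,\mu)}$, then by the shift $e^{\alpha^B_1}$ on each fermion, reindexing so that the binomial sum collapses to $\tfrac{\Delta^j}{j!}$, and finally extracting the scalar term from the $\hat{\mf{b}}_\infty$ cocycle near the diagonal --- is exactly the standard route taken in that reference, and your identification of the normal-ordering bookkeeping (with the $u[0]=\tfrac12$ from \cref{lem:VEV:Fock:basis:B}) as the only genuinely delicate step is correct.
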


To expand the left-hand side, we use the following lemma.

\begin{lemma}\label{lem:bckwrds:Q}
	The backwards difference operator acts on monomials as:
	\begin{equation}\label{eqn:diff:monomial}
		\frac{\Delta^j}{j!} \frac{l^a}{a!}
		=
		[w^a] \frac{\varsigma(w)^{j}}{j!} e^{w(l-\frac{j}{2})} .
	\end{equation}
	Moreover, the extraction of powers of $l$ from the polynomial $Q$ yields
	\begin{equation}\label{eqn:coeff:Q}
		a! [l^a] Q(l,\mu)^{p}
		=
		[w^{-a}]
		\left(
			\sum_{m=0}^p \binom{p}{m} \frac{(2p-2m)!}{w^{2p-2m}} \frac{\mu^{2m}}{12^m}
		\right)
			e^{w\frac{\mu}{2}}.
	\end{equation}
\end{lemma}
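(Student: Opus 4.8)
The plan is to prove the two identities separately, each by a short generating-function computation; neither requires anything beyond elementary umbral calculus.

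For \cref{eqn:diff:monomial}, I would package the left-hand side into a generating series in $a$. Since $\Delta$ is the backward difference operator $\Delta = 1 - E^{-1}$, with $E^{-1} = e^{-D}$ the unit backward shift in $l$ (where $D = \tfrac{d}{dl}$), it acts on the exponential by $\Delta\, e^{wl} = e^{wl} - e^{w(l-1)} = (1 - e^{-w})\,e^{wl}$, and hence $\Delta^j e^{wl} = (1-e^{-w})^j e^{wl}$. Summing the left-hand side against $w^a$ and using $\sum_{a \ge 0}\frac{l^a}{a!}w^a = e^{wl}$ gives $\sum_{a\ge 0}\bigl(\frac{\Delta^j}{j!}\frac{l^a}{a!}\bigr)w^a = \frac{(1-e^{-w})^j}{j!}e^{wl}$, so that $\frac{\Delta^j}{j!}\frac{l^a}{a!} = [w^a]\frac{(1-e^{-w})^j}{j!}e^{wl}$. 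It then only remains to recognise that $(1-e^{-w})^j e^{wl} = \varsigma(w)^j e^{w(l-j/2)}$, which follows from $\varsigma(w) = e^{w/2}-e^{-w/2}$ by factoring $e^{-w/2}$ out of each of the $j$ factors of $\varsigma(w)$.

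For \cref{eqn:coeff:Q}, the key is to first simplify $Q$. Expanding $(l+\mu)^3 - l^3$ and dividing by $3\mu$ gives $Q(l,\mu) = l^2 + \mu l + \tfrac{\mu^2}{3}$, which after completing the square reads $Q(l,\mu) = (l+\tfrac{\mu}{2})^2 + \tfrac{\mu^2}{12}$. I would then expand the $p$-th power by the binomial theorem, $Q^p = \sum_{m=0}^p \binom{p}{m}\frac{\mu^{2m}}{12^m}(l+\tfrac{\mu}{2})^{2p-2m}$, and extract $a![l^a]$ of each summand using $a![l^a](l+\tfrac{\mu}{2})^{N} = \frac{N!}{(N-a)!}(\tfrac{\mu}{2})^{N-a}$. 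On the right-hand side, the residue extraction $[w^{-a}]\frac{e^{w\mu/2}}{w^{2p-2m}} = \frac{(\mu/2)^{2p-2m-a}}{(2p-2m-a)!}$ produces exactly the same factorial factors, so that the two sides agree term by term in $m$.

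No step presents a genuine obstacle; the only points requiring care are the exact bookkeeping of the shift $e^{w(l-j/2)}$ in the first identity and the matching of the factorials $\frac{(2p-2m)!}{(2p-2m-a)!}$ between the polynomial coefficient extraction and the Laurent residue in the second. In both cases the support conventions agree automatically, since the relevant coefficient vanishes precisely when $2p-2m-a < 0$.
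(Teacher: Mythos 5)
Your proposal is correct, and for the second identity \cref{eqn:coeff:Q} it coincides with the paper's own argument: complete the square to get $Q(l,\mu) = (l+\tfrac{\mu}{2})^2 + \tfrac{\mu^2}{12}$, expand by the binomial theorem, extract $a![l^a]$, and recognise the resulting factorials $\tfrac{(2p-2m)!}{(2p-2m-a)!}(\tfrac{\mu}{2})^{2p-2m-a}$ as the Laurent coefficient $[w^{-a}]$ of $\tfrac{(2p-2m)!}{w^{2p-2m}}e^{w\mu/2}$, with both sides vanishing when $2p-2m-a<0$. For the first identity \cref{eqn:diff:monomial}, however, your route is genuinely different from, and cleaner than, the paper's. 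The paper expands $\Delta^j$ as the alternating binomial sum $\frac{1}{j!}\sum_{i=0}^j\binom{j}{i}(-1)^i(l-i)^a$, re-expands $(l-i)^a$ around $l-j$, identifies the inner sum as a Stirling number of the second kind via $\Strlng{n}{j}{} = \frac{1}{j!}\sum_{i=0}^{j}(-1)^i(j-i)^n\binom{j}{i}$, and then converts back using $\frac{1}{n!}\Strlng{n}{j}{} = \frac{1}{j!}[w^n](e^w-1)^j$. You instead sum the identity against $w^a$ and use that the backward difference acts on exponentials as the scalar $1-e^{-w}$, so that $\Delta^j e^{wl} = (1-e^{-w})^j e^{wl} = \varsigma(w)^j e^{w(l-j/2)}$ after factoring $e^{-w/2}$ out of each factor of $\varsigma(w) = e^{w/2}-e^{-w/2}$; the interchange of $\Delta^j$ with the sum over $a$ is harmless since everything is a formal power series in $w$ with polynomial coefficients in $l$. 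What your approach buys is the complete elimination of Stirling-number identities and of the intermediate re-expansion step — the half-integer shift $e^{w(l-j/2)}$, which in the paper emerges only at the end of the bookkeeping, appears immediately from the symmetric factorisation of $1-e^{-w}$. The paper's computation, on the other hand, makes explicit the combinatorial content (the Stirling numbers) that your operator calculus hides.
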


\begin{proof}
	We may expand the left-hand side of \cref{eqn:diff:monomial} as follows:
	\begin{align*}
		\frac{\Delta^j}{j!} \frac{l^a}{a!}
		&= \frac{1}{j!} \sum_{i=0}^j \binom{j}{i}(-1)^i \frac{(l-i)^a}{a!}
		\\
		&= \frac{1}{j!} \sum_{i=0}^j \binom{j}{i} (-1)^i \sum_{k=0}^a \frac{1}{(a-k)!}(j-i)^{a-k} \frac{1}{k!}(l-j)^k
		\\
		&= \frac{1}{j!} \sum_{k=0}^a \sum_{i=0}^j \binom{j}{i} (-1)^i \frac{1}{(a-k)!}(j-i)^{a-k} [w^k]e^{w(l-j)} .
	\end{align*}
	The $i$-sum can be expressed as a Stirling number of the second kind $\Strlng{n}{j}{}$, thanks to the identity $\Strlng{n}{j}{} = \frac{1}{j!} \sum_{i=0}^{t} (-1)^{i} (t-i)^{n} \binom{j}{i}$. Thus:
	\begin{align*}
		\frac{\Delta^j}{j!} \frac{l^a}{a!}
		&= \sum_{k=0}^a \frac{1}{(a-k)!} \Strlng{a-k}{j}{} [w^k]e^{w(l-j)}
		\\
		&= \frac{1}{j!} \sum_{k=0}^a [w^{a-k}] (e^w-1)^j [w^k]e^{w(l-j)}
		\\
		&= [w^a] \frac{\varsigma(w)^{j}}{j!} e^{w(l-\frac{j}{2})}.
	\end{align*}
	The second equality follows from the identity $\frac{1}{n!} \Strlng{n}{j}{} = \frac{1}{j!} [w^n] (e^w - 1)^j$. For \cref{eqn:coeff:Q}, we find
	\begin{equation*}
	\begin{split}
		a! [l^a] Q(l,\mu)^{p}
		&= a! [l^a] \Big( l^2 + \mu l + \frac{\mu^2}{3} \Big)^p
		\\
		&= a! [l^a] \Big( \big(l + \frac{\mu}{2}\big)^2 + \frac{\mu^2}{12} \Big)^p
		\\
		&= \sum_{m=0}^p \binom{p}{m} \frac{\mu^2m}{12^m} a! [l^a]\Big( l + \frac{\mu}{2} \Big)^{2p-2m}
		\\
		&= \sum_{m=0}^p \binom{p}{m} \frac{(2p-2m)!}{(2p-2m-a)!} \frac{\mu^{2m}}{12^m} \Big( \frac{\mu}{2} \Big)^{2p-2m-a}
		\\
		&=
		[w^{-a}]
		\Big(
			\sum_{m=0}^p \binom{p}{m} \frac{(2p-2m)!}{w^{2p-2m}} \frac{\mu^{2m}}{12^m}
		\Big)
			e^{w\frac{\mu}{2}} . \hfill \qedhere
	\end{split}
	\end{equation*}
\end{proof}

\begin{proof}[Proof of \cref{prop:B:operators:odd:integers}]
	Starting from the left-hand side of \cref{eqn:conj:on:basis}, we find
	\begin{multline*}
		\frac{\mu'!}{(u \mu)^{\mu'}}
		e^{\alpha^B_1} e^{u\frac{\mc{F}^B_{3}}{3}} \alpha^B_{-\mu} e^{-u\frac{\mc{F}^B_{3}}{3}} e^{-\alpha^B_1} = \\
		=
		\sum_{s \ge -\mu'}
			\frac{(u \mu)^s}{(\mu'+1)_s}
			\Bigg(
			\sum_{k = -\mu}^{2s-1}
			\sum_{l \ge k'+1}
				(-1)^l
				\frac{\Delta^{\mu+k}}{(\mu+k)!}
				Q(l,\mu)^{\mu'+s}
			E^B_{l-k,l}
			+
			\frac{1}{2} \frac{\Delta^{\mu-1}}{\mu!}
				Q(l,\mu)^{\mu'+s} \big|_{l=0}
				\Id
		\Bigg) .
	\end{multline*}
	We can now expand the difference operator using \cref{lem:bckwrds:Q} to get
	\[
		\frac{\Delta^{\mu+k}}{(\mu+k)!} Q(l,\mu)^{\mu'+s}
		=
		\sum_{m=0}^{\mu'+s}
				\binom{\mu'+s}{m} \frac{(\mu+2(s-m)-1)!}{(\mu+k)!} \frac{\mu^{2m}}{12^m}
				[w^{2(s-m)-1}]
					\mc{S}(w)^\mu \varsigma(w)^k e^{w(l - \frac{k}{2})} .
	\]
	Similarly for the contribution of the identity:
	\[
		\frac{1}{2} \frac{\Delta^{\mu-1}}{\mu!} Q(l,\mu)^{\mu'+s} \big|_{l=0}
		=
		\frac{1}{2} \sum_{m=0}^{\mu'+s}
				\binom{\mu'+s}{m} \frac{(\mu+2(s-m)-1)!}{\mu!} \frac{\mu^{2m}}{12^m}
				[w^{2(s-m)-1}]
					\mc{S}(w)^{\mu} \frac{e^{\frac{w}{2}}}{\varsigma(w)} .
	\]
	Notice that we are extracting the coefficient of an odd power of $w$ from $\mc{S}(w)^{\mu} \varsigma(w)^{-1} e^{\frac{w}{2}}$, and $\mc{S}(w)^{\mu} \varsigma(w)^{-1}$ is an odd function. In particular, we can substitute $e^{\frac{w}{2}}$ with its even part, $\cosh(w/2)$, since it would not change the extracted coefficient. Thus, we find
	\[
		\frac{1}{2} \frac{\Delta^{\mu-1}}{\mu!} Q(l,\mu)^{\mu'+s} \big|_{l=0} = \\
		=
		\frac{1}{2} \sum_{m=0}^{\mu'+s}
				\binom{\mu'+s}{m} \frac{(\mu+2(s-m)-1)!}{\mu!} \frac{\mu^{2m}}{12^m}
				[w^{2(s-m)-1}]
					\mc{S}(w)^{\mu} \frac{\cosh(\tfrac{w}{2})}{\varsigma (w)} .
	\]
	In particular, we get the right correction for $\mc{E}_0^B(w)$. All together, exchanging the $s$- and $k$-sums, we find 
	\begin{multline*}
		\frac{\mu'!}{(u \mu)^{\mu'}}
		e^{\alpha^B_1} e^{u\frac{\mc{F}^B_{3}}{3}} \alpha^B_{-\mu} e^{-u\frac{\mc{F}^B_{3}}{3}} e^{-\alpha^B_1}
		=
		\sum_{k \ge -\mu} \sum_{s \ge k'+1}
			\frac{(u \mu)^s}{(\mu'+1)_s} \\
			\times
			\sum_{m=0}^{\mu'+s}
				\binom{\mu'+s}{m} \frac{(\mu+2(s-m)-1)!}{(\mu+k)!}
				\frac{\mu^{2m}}{12^m}
				[w^{2(s-m)-1}]
					\mc{S}(w)^\mu \varsigma(w)^k
					\mc{E}^B_k(w) .
	\end{multline*}
	Notice that the factorials simplify as
	\[
		\frac{1}{(\mu'+1)_s}\binom{\mu'+s}{m} \frac{(\mu+2(s-m)-1)!}{(\mu+k)!}
		=
		\frac{\mu'!}{(\mu+k)!} \frac{1}{m!} \frac{(\mu+2(s-m)-1)!}{(\mu'+s-m)!} .
	\]
	Exchanging the $s$- and $m$-sums, we should get bounds $m \ge 0$ and $s \ge \max\set{m-\mu',k'+1}$. However, the summands for $s < m+k'+1- \frac{\delta_{k,0}}{2}$ vanish, due to the vanishing order of zeros of $\varsigma(w)^k \mc{E}^B_k(w)$. Hence, for $k \ne 0$, we can replace the lower bound for $s$ by $m+k'+1$ in order to get
	\[
	\begin{split}
		& \frac{\mu'!}{(u \mu)^{\mu'}}
		e^{\alpha^B_1} e^{u\frac{\mc{F}^B_{3}}{3}} \alpha^B_{-\mu} e^{-u\frac{\mc{F}^B_{3}}{3}} e^{-\alpha^B_1} = \\
		& \qquad\quad =
		\sum_{k \ge -\mu} \frac{\mu'!}{(\mu+k)!} 
		\sum_{m \ge 0}
			\frac{1}{m!} \frac{\mu^{2m}}{12^m}
			\sum_{s \ge m + k'+1}
				\frac{(\mu+2(s-m)-1)!}{(\mu'+s-m)!} (u \mu)^s
				[w^{2(s-m)-1}]
					\mc{S}(w)^\mu \varsigma(w)^k
					\mc{E}^B_k(w)
		\\
		& \qquad\quad =
		e^{u \frac{\mu^3}{12}}
		\sum_{k \ge -\mu} \sum_{j \ge k'+1}
			\frac{\mu'!}{(\mu+k)!} 
			\frac{(\mu+2j-1)!}{(\mu'+j)!} (u \mu)^t
			[w^{2j-1}]
				\mc{S}(w)^\mu
				\varsigma(w)^k
				\mc{E}^B_k(w)
		.
	\end{split}
	\]
	We can now see that the above expression coincides with the definition of $\mc{B}$ for $z = \mu \in \Z^{\textup{odd}}_+$. Indeed, we have the factor $\frac{\Gamma(\mu' + 1)}{\Gamma(\mu + k + 1)}$, which vanishes for $k < - \mu$ (and has no poles, since $\mu$ is assumed to be positive). On the other hand, by Legendre duplication formula, we find
	\[
		\frac{\Gamma(\mu + 2j)}{\Gamma(\mu' + j + 1)} = \frac{2^{\mu + 2j -1}}{\sqrt{\pi}} \Gamma(\mu' + j + \tfrac{1}{2}),
	\]
	which has no poles for $\mu \in \Z^{\textup{odd}}_+$ and $j$ integer. Thus, the summation in the definition of $\mc{B}$ for $z = \mu \in \Z^{\textup{odd}}_+$ reduces to $k \ge -\mu$.
\end{proof}

\subsection{The commutation relation}
\label{subsec:comm:rel}

We can now compute the commutator of two $\mc{B}$-operators. Consider the doubly infinite series
\begin{equation}
	\delta(z,-w) = \frac{1}{w} \sum_{k\in \Z} \Big( -\frac{z}{w}\Big )^k \in \Q\pparenthesis{z,w} ,
\end{equation}
which is a formal $\delta$-function at $z + w = 0$ in the sense that $(z+w)\delta(z,-w) = 0$, or equivalently as the discontinuity of the rational function $\frac{1}{(z+w)}$ along the divisor $z + w = 0$.
The same series appears in the A-case \cite[theorem~1]{OP06b}.

\begin{proposition}[Commutation relation] \label{prop:commutation}
	\begin{equation}
		\bigl[
			\mc{B}(z,uz),\mc{B}(w,uw)
		\bigr]
		= 
		u zw \delta(z,-w) .
	\end{equation}
\end{proposition}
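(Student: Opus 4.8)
The plan is to use the conjugation description of \cref{prop:B:operators:odd:integers} to split the statement into two independent tasks: proving that the commutator is \emph{central} (a scalar multiple of $\Id$), and then identifying that scalar with the formal $\delta$-function. The guiding observation is that for $z=\mu$ and $w=\nu$ positive odd integers, \cref{prop:B:operators:odd:integers} realises $\mc{B}(\mu,u\mu)$ and $\mc{B}(\nu,u\nu)$ as conjugates of $\alpha^B_{-\mu}$ and $\alpha^B_{-\nu}$ by the \emph{same} operator $C=e^{\alpha^B_1}e^{u\mc{F}^B_3/3}$, up to scalar normalisation. Since both indices are negative, the Heisenberg relation gives $[\alpha^B_{-\mu},\alpha^B_{-\nu}]=-\tfrac{\mu}{2}\delta_{\mu+\nu}=0$, so conjugation yields
\begin{equation*}
	\bigl[\mc{B}(\mu,u\mu),\mc{B}(\nu,u\nu)\bigr]=0 \qquad\text{for all positive odd integers } \mu,\nu .
\end{equation*}
This vanishing is consistent with the asserted right-hand side, because $\delta(z,-w)$ is supported on the divisor $z+w=0$, which these points never meet.

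First I would prove centrality. Expanding the two operators via \cref{def:B-ops} and applying the commutation relation \labelcref{eqn:spin:OP:comm} bilinearly writes the commutator as a combination of operators $\mc{E}^B_m(v_1\pm v_2)$ (with prefactors built from the $\mc{S},\varsigma,\qoppa$ series and the gamma-factors) together with the scalars coming from the correction term $\qoppa/\varsigma$ in $\mc{E}^B_0$. The non-central content — the coefficients of the $\hat{\mc{E}}^B_m$, equivalently the off-diagonal matrix elements — resums, away from the divisor $z+w=0$, to rational functions of $z$ and $w$: the gamma-factors in \cref{def:B-ops} are ratios of gamma functions whose arguments differ by integers, hence rational, and the relevant intermediate sums are geometric. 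These rational functions vanish at every positive odd integer by the computation above, and a nonzero rational function has only finitely many zeros, so they vanish identically, first in $z$ (for $w=\nu$ fixed) and then in $w$. Hence the operator part is zero and the commutator is central: $[\mc{B}(z,uz),\mc{B}(w,uw)]=c(z,w,u)\,\Id$.

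It then remains to compute $c$, and here a central element equals its own vacuum expectation value, so
\begin{equation*}
	c(z,w,u)=\Braket{\mc{B}(z,uz)\mc{B}(w,uw)}-\Braket{\mc{B}(w,uw)\mc{B}(z,uz)} .
\end{equation*}
I would evaluate each vacuum expectation value by reducing it, through the two-point functions $\Braket{\mc{E}^B_k(v_1)\mc{E}^B_l(v_2)}$ controlled by \cref{prop:OP:VEV}, which are non-vanishing only for $k+l=0$. The essential mechanism is that the bi-infinite sum over the common index $k$ of the two $\mc{E}^B$-operators produces, for the two orderings, the \emph{same} rational function expanded in the two opposite regions $|z|\lessgtr|w|$; their difference is exactly the discontinuity of that function across $z+w=0$, which by definition is the formal $\delta$-function. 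Carrying the gamma-factors of \cref{def:B-ops} and the $\mc{S},\varsigma,\qoppa$ series through this resummation should collapse the residue at the simple pole $z+w=0$ to $u\,zw$, giving $c(z,w,u)=u\,zw\,\delta(z,-w)$.

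The main obstacle is precisely this last resummation. Centrality is essentially formal once the rationality of the off-divisor matrix elements is granted, but pinning the coefficient of the $\delta$-function to exactly $u\,zw$ requires controlling the intricate gamma-function prefactors of \cref{def:B-ops} against the structure constants of \labelcref{eqn:spin:OP:comm}, and checking that every contribution except the simple pole at $z+w=0$ cancels. A useful consistency check on the normalisation is the continuation of the Heisenberg relation $[\alpha^B_m,\alpha^B_n]=\tfrac{m}{2}\delta_{m+n}$: under $\mu,\nu\mapsto z,w$ the Kronecker $\delta$ is promoted to the formal $\delta$-function of the proposition, and the scalar normalisations of \cref{prop:B:operators:odd:integers} together with the structure constant $\tfrac{m}{2}$ must combine to reproduce the coefficient $u\,zw$.
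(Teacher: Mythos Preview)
Your strategy---centrality via Zariski density on positive odd integers, then identifying the scalar by vacuum expectation---is conceptually reasonable but leaves the two hardest steps undone, and the paper takes a different route precisely to avoid them. The first gap is rationality: you assert that the non-central matrix elements of the commutator ``resum, away from $z+w=0$, to rational functions of $z$ and $w$'', but this is the entire analytic content and is not established. Feeding \cref{def:B-ops} into \labelcref{eqn:spin:OP:comm} produces, for each pair of energy indices, terms $\varsigma(\cdot)\,\mc{E}^B_{k+l}(v_1\pm v_2)$ with the $\varsigma$-argument mixing the internal variables; after the coefficient extractions $[w^{2j-1}]$ and the double sum over $j$, it is not clear that a fixed matrix element is rational in $z$ rather than a genuine formal Laurent series. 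Without rationality the density argument has no traction---and note that the scalar part of the commutator is \emph{not} rational (it is the $\delta$-function), so there is no a priori reason the operator part should be. The second gap you flag yourself: the resummation pinning the residue at $z+w=0$ to exactly $u\,zw$ is ``the main obstacle'' and is not carried out. In the non-spin case this is where \cite{OP06a} invoked delicate hypergeometric analysis.

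The paper sidesteps both issues by constructing an upper-unitriangular dressing operator $W$ (via an ODE in $u$, \cref{DressingDef,thm:Bdressing}) with $W^{-1}\ms{B}(z)W=\widetilde{\ms{B}}(z)$, where $\widetilde{\ms{B}}(z)=\tfrac{1}{u}\sum_{l}\tfrac{(uz)^l}{(tz'+1)_l}\alpha^B_{2l-1}$ is built purely from Heisenberg generators. The commutator $[\widetilde{\ms{B}}(z),\widetilde{\ms{B}}(w)]$ is then a direct CCR computation that telescopes to $\tfrac{zw}{u}\,\delta(z,-w)$; since $W^{-1}\ms{B}W-\widetilde{\ms{B}}$ vanishes in $\mf{h}$ and is therefore central in $\widehat{\mf{h}}$, conjugation by $W$ preserves the commutator. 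Thus the construction of $W$ replaces both of your missing steps at once.
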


Before giving a proof of the above commutation relation, let us explore its main consequences. Firstly, the correlator $\Braket{ \mc{B}(z_1, uz_1) \cdots \mc{B}(z_n, uz_n)}$ is a rational function with at most simple poles at $z_i = 0$ and simple poles along the antidiagonals $z_i + z_j = 0$.

\begin{corollary}
	We have
	\begin{equation}
		\prod_{i=1}^n z_i \prod_{1 \leq i < j \leq n}(z_i + z_j)
		\Braket{ \mc{B}(z_1, uz_1) \cdots \mc{B}(z_n, uz_n)}
		\in \Q[u^{\pm}] \bbraket{z_1, \dots, z_n}.
	\end{equation}
\end{corollary}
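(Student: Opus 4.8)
The plan is to deduce this corollary directly from the commutation relation of \cref{prop:commutation} together with the vacuum expectation value computations of \cref{prop:OP:VEV}. The strategy is to show that the correlator $\Braket{ \mc{B}(z_1, uz_1) \cdots \mc{B}(z_n, uz_n)}$, as a power series in the $z_i$, has controlled poles: at worst a simple pole at each $z_i = 0$ and a simple pole along each antidiagonal $z_i + z_j = 0$, with no other singularities. Multiplying by the product $\prod_i z_i \prod_{i<j}(z_i+z_j)$ then clears all denominators and leaves an element of $\Q[u^{\pm}]\bbraket{z_1,\dots,z_n}$.

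\smallskip

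First I would establish the claim by induction on $n$, using the commutation relation to move operators past one another. The key mechanism is that whenever two adjacent operators $\mc{B}(z_i,uz_i)$ and $\mc{B}(z_j,uz_j)$ are swapped, by \cref{prop:commutation} the correction term is $u\,z_iz_j\,\delta(z_i,-z_j)$, which is exactly the discontinuity of the rational function $\frac{u\,z_iz_j}{z_i+z_j}$ along $z_i+z_j=0$. This is the precise source of the simple poles along the antidiagonals: commuting operators into a normal-ordered form produces, at each swap, a factor carrying a $\frac{1}{z_i+z_j}$ singularity, while the remaining (fully contracted) correlator involves strictly fewer operators and is handled by the inductive hypothesis. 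The base case $n=1$ is the single vacuum expectation $\Braket{\mc{B}(z_1,uz_1)}$, which by \cref{prop:OP:VEV} reduces to the vacuum expectation values of the $\mc{E}^B_k(w)$; only the $k=0$ term contributes a constant, producing the term $\delta_k\frac{\qoppa(w)}{\varsigma(w)}$, whose expansion has at worst a simple pole in $z_1$ at $z_1=0$.

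\smallskip

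The heart of the argument is therefore a careful bookkeeping of how the singular terms accumulate: I would show that in the process of reducing the $n$-fold correlator to a sum of products of pairwise commutators and lower correlators, each $\frac{1}{z_i+z_j}$ factor appears \emph{at most once} (simple poles only), and that the $z_i=0$ poles arising from the vacuum expectation values are likewise simple. Concretely, one expands each $\mc{B}(z_i,uz_i)$ into its defining series in the $\mc{E}^B_k(w)$ operators, uses the commutation relation \cref{eqn:spin:OP:comm} and the vacuum expectation values \cref{prop:OP:VEV} to reduce the whole expression to a rational function, and tracks the pole structure. The factor $\prod_i z_i$ absorbs the $z_i=0$ poles and $\prod_{i<j}(z_i+z_j)$ absorbs the antidiagonal poles; after multiplication the result is manifestly a power series in the $z_i$ with coefficients Laurent polynomial in $u$ (the $u^{\pm}$ arising from the explicit $u$-powers in the definition of $\mc{B}$ and in the commutator).

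\smallskip

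The main obstacle I anticipate is controlling the pole order rigorously — ensuring no higher-order poles appear along the antidiagonals when several commutators could in principle overlap. The delicate point is that the $\delta$-function in \cref{prop:commutation} is a \emph{formal} distribution, so one must argue at the level of the doubly-infinite series expansions in $\Q\pparenthesis{z,w}$ rather than naively with rational functions; the statement that multiplication by $(z_i+z_j)$ kills the discontinuity is exactly the formal identity $(z+w)\delta(z,-w)=0$ recorded above, and I would lean on this to confirm that each antidiagonal contributes only a simple pole. Establishing that the cross-terms from non-adjacent commutations do not conspire to raise the pole order is the step requiring the most care, and it is where the formal-distribution viewpoint is essential.
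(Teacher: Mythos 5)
Your central mechanism --- an induction on $n$ that reduces the $n$-fold correlator ``to a sum of products of pairwise commutators and lower correlators'' --- is not available for these operators, and this is a genuine gap rather than a technicality. A Wick-style reduction of that kind needs more than centrality of the full commutators $[\mc{B}(z_i,uz_i),\mc{B}(z_j,uz_j)]$: it needs a splitting of each insertion into creation and annihilation parts whose mutual commutators are again central. Here the natural splitting is by the index $k$ of $\mc{E}^B_k$ in \cref{def:B-ops}, and \cref{eqn:spin:OP:comm} shows that brackets of those pieces are combinations of operators $\mc{E}^B_{k+l}$, which are not central; the centrality of the full bracket (\cref{prop:commutation}) is a delicate cancellation --- this is exactly what the dressing-operator argument of the paper establishes --- and it does not pass to the pieces. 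A concrete way to see that your reduction must fail: if it held as described, the correlators would be Gaussian, so all connected $n$-point functions with $n \geq 3$ would vanish; but by \cref{thm:dH:as:VEV} the connected parts are the generating series $\dHconn_g$ of connected double Hodge integrals, which are nonzero for every $n$. For the same reason, commutator bookkeeping alone can never control the poles at $z_i=0$: replacing $\mc{B}(z,uz)$ by $\mc{B}(z,uz)+f(z,u)\,\Id$ leaves every commutator unchanged but shifts the correlators by terms with arbitrary singularities, so the $\prod_i z_i$ half of the statement cannot be an output of \cref{prop:commutation}. (Your base case is also off: $\Braket{\mc{E}^B_0(w)}=\qoppa(w)/\varsigma(w)$ has its pole in $w$, not in $z_1$; the simple pole of $\Braket{\mc{B}(z_1,uz_1)}$ at $z_1=0$ comes from the Gamma-factor coefficients in \cref{def:B-ops}.)

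For comparison, the paper's proof is two sentences and sidesteps everything you flag as the ``main obstacle''. First, by \cref{prop:commutation} the correlator multiplied by $\prod_{i<j}(z_i+z_j)$ is symmetric in the $z_i$: transposing two adjacent insertions changes the correlator by $u\,z_iz_j\,\delta(z_i,-z_j)$ times a shorter correlator, and this discrepancy is annihilated by the factor $(z_i+z_j)$, since $(z+w)\delta(z,-w)=0$. This is the only use of the commutation relation, and it is used to get symmetry, not to produce or bound poles. Second, the correlator is a Laurent series in the \emph{leftmost} variable $z_1$ with at most a simple pole: paired against the covacuum, only the $k\geq 0$ terms of \cref{def:B-ops} survive (the $k<0$ terms, which carry the negative powers $(uz_1)^j$, are killed), and the surviving coefficients have at most a first-order pole at $z_1=0$, coming from $\Gamma(z_1)/\Gamma(z_1+1)=1/z_1$. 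Symmetry then transfers this bound from $z_1$ to every variable, and multiplying by $\prod_i z_i$ clears what is left. So one never needs to bound poles of middle insertions, track overlapping commutators, or run an induction at all: the two inputs are ``symmetry from centrality'' and ``first-variable regularity from the definition'', and your proposal mis-deploys the first and is missing the second.
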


\begin{proof}
	By \cref{prop:commutation}, the correlator, once multiplied with the product of $ z_i + z_j$, must be symmetric in the $z_i$. It is also a Laurent series in $z_1$ (and hence the other $z_i$) with at most a simple pole, as can be seen immediately from \cref{def:B-ops}. 
\end{proof}

Thanks to this regularity result, we can prove the main result of the section.

\begin{theorem}\label{thm:dH:as:VEV}
	The generating series of double Hodge integrals can be expressed as the following vacuum expectation value:
	\begin{equation}
		u^{n} \dHdisc(z_1,\dots,z_n;u) = \Braket{ \prod_{i=1}^n \mc{B} (z_i, uz_i) } .
	\end{equation}
\end{theorem}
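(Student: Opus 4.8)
The plan is to prove the identity by a density (or analytic continuation) argument, leveraging the spin ELSV formula \labelcref{eqn:spin:ELSV} which relates double Hodge integrals to spin Hurwitz numbers at integer points. Both sides of the claimed equation are, after clearing the known poles, elements of $\Q[u^{\pm}]\bbraket{z_1,\dots,z_n}$; since a multivariate power series (or rational function with controlled poles) is determined by its values on a Zariski-dense set of arguments, it suffices to verify the equality when each $z_i = \mu_i$ is an odd positive integer. This reduces a statement about arbitrary complex descendants to a statement about genuine enumerative spin Hurwitz numbers, for which a Fock-space formula is already available.

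First I would establish the regularity of the right-hand side: by the preceding corollary, $\prod_i z_i \prod_{i<j}(z_i+z_j)\Braket{\prod_i \mc{B}(z_i,uz_i)}$ lies in $\Q[u^{\pm}]\bbraket{z_1,\dots,z_n}$, so the correlator is a well-defined rational function with only simple poles at $z_i=0$ and along the antidiagonals $z_i+z_j=0$. For the left-hand side, the generating series $\dHdisc$ has poles only at $z_i = 0$ coming from the unstable contributions in \cref{eqn:dH:unstable}; I would check that after multiplying by $\prod_i z_i$ the disconnected double Hodge series is a power series (the $z_i+z_j$ poles on the right are an artifact of the correlator that must cancel, or match the disconnected structure — this point needs care). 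The upshot is that both sides, suitably normalized, are determined by their restriction to odd positive integer tuples.

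Next I would verify the equality at $z_i=\mu_i \in \Z_+^{\textup{odd}}$. By \cref{prop:B:operators:odd:integers}, each $\mc{B}(\mu_i,u\mu_i)$ is the conjugation of $\frac{\mu_i'!}{(u\mu_i)^{\mu_i'}}\alpha^B_{-\mu_i}$ by $e^{\alpha^B_1}e^{u\frac{\mc{F}^B_3}{3}}$. Inserting this into the correlator $\Braket{\prod_i \mc{B}(\mu_i,u\mu_i)}$, the conjugating factors telescope, and I obtain a vacuum expectation value of the shape appearing in \cref{prop:sHN:VEV} for the $3$-completed cycles spin single Hurwitz numbers $h^{+,2}_{g;\mu}$ (up to the explicit prefactors $\frac{\mu_i'!}{(u\mu_i)^{\mu_i'}}$ and a power of $u$ bookkeeping the number $b$ of completed cycles). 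On the other side, the spin ELSV formula \cref{eqn:spin:ELSV} rewrites exactly these $h^{+,2}_{g;\mu}$ in terms of the double Hodge integrals defining $\dHconn_g$ in \cref{eqn:dH:conn:gen}; assembling over genus and passing from connected to disconnected counts matches $\dHdisc$. Thus the two sides agree for all odd integer arguments.

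The main obstacle I anticipate is the careful bookkeeping of the transition from the single-variable ELSV statement to the multivariate disconnected generating series, and in particular reconciling the antidiagonal poles $z_i+z_j=0$ on the right-hand side with the purely diagonal ($z_i=0$) pole structure of $\dHdisc$ on the left. The commutation relation of \cref{prop:commutation} shows the $\mc{B}$-operators commute up to the delta-function term $u\,zw\,\delta(z,-w)$, which is supported precisely on $z_i+z_j=0$; I expect that the disconnected-to-connected combinatorics of $\dHdisc$, together with the unstable two-point term $\dHconn_0(z_1,z_2)=\frac{z_1z_2}{z_1+z_2}$, is exactly engineered to reproduce these antidiagonal contributions. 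Verifying that the density argument is valid despite these extra poles — i.e. that clearing $\prod_{i<j}(z_i+z_j)$ yields honest power series on both sides whose coefficients are polynomials in $u^{\pm}$ determined by the odd-integer values — is the technical heart of the proof, and is the step where I would spend the most effort.
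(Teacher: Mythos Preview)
Your proposal is correct and follows essentially the same route as the paper: establish rationality of the correlator via the commutation relation (the preceding corollary), then verify the identity at odd positive integer arguments by combining \cref{prop:B:operators:odd:integers}, the vacuum-expectation formula for spin Hurwitz numbers (\cref{prop:sHN:VEV}), and the spin ELSV formula (\cref{thm:spin:ELSV}), and conclude by Zariski density. Your extra caution about the antidiagonal poles is well placed --- the paper handles this rather briskly --- but your expectation that the unstable two-point term $\dHconn_0(z_1,z_2)=\frac{z_1 z_2}{z_1+z_2}$ accounts for them is exactly right.
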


\begin{proof}
	Recall the spin ELSV formula (\cref{thm:spin:ELSV}) and the expression of spin Hurwitz numbers as vacuum expectation values (\cref{prop:sHN:VEV}):
	\begin{align*}
		h^{\bullet, +, 2}_{g;\mu}
		& =
		2^{g-1+n}
		\bigg( \prod_{i=1}^n \frac{\mu_i^{\mu_i'}}{\mu_i'!} \bigg)
		\int_{\M^\bullet_{g,n}} \frac{ \Lambda (2) \Lambda (-1)}{\prod_{i=1}^n (1-\mu_i \psi_i)}
		,
		\\
		h^{\bullet, +, 2}_{g;\mu}
		& =
		2^{1-g} [u^b]
		\Braket{
			\prod_{i=1}^n
			e^{\alpha^B_1} e^{u\frac{\mc{F}^B_3}{3}}
			\frac{\alpha^B_{-\mu_i}}{\mu_i}
			e^{-u \frac{\mc{F}^B_3}{3}} e^{-\alpha^B_1}
		} .
	\end{align*}
	Here 
	$b = g-1 + \frac{|\mu| +n}{2}$ is given by the Riemann--Hurwitz formula. On the other hand, from \cref{prop:B:operators:odd:integers} we find
	\begin{equation*}
		\frac{(u\mu)^{\mu'}}{\mu'!} \mc{B}(\mu, u\mu)
		=
		e^{\alpha^B_1} e^{u\frac{\mc{F}^B_3}{3}} \alpha^B_{-\mu_i} e^{-u \frac{\mc{F}^B_3}{3}} e^{-\alpha^B_1} .
	\end{equation*}
	Putting this together, we obtain:
	\begin{align*}
		\dHdisc (\mu;u) 
		& =
		\sum_{g=0}^\infty u^{g-1} 2^{2g-2+n} \int_{\M^\bullet_{g,n}} \Lambda(2) \Lambda(-1) \prod_{i=1}^n \frac{\mu_i}{1-\mu_i \psi_i}
		\\
		& =
		\sum_{g=0}^\infty u^{g-1} \biggl( \prod_{i=1}^n \mu_i \biggr)
			\biggl( \prod_{i=1}^n \frac{\mu_i^{\mu_i'}}{\mu_i'!} \biggr)^{-1}
			\bigl[ v^{g-1+\frac{|\mu| + n}{2}} \bigr]
			\Braket{ \prod_{i=1}^n e^{\alpha^B_1} e^{v\frac{\mc{F}^B_3}{3}} \frac{\alpha^B_{-\mu_i}}{\mu_i} e^{-v \frac{\mc{F}^B_3}{3}} e^{-\alpha^B_1} }
		\\
		& =
		\sum_{g=0}^\infty u^{g-1} [v^{g-1 +n}] \Braket{ \prod_{i=1}^n \mc{B} (\mu_i, v\mu_i) }
		\\
		& =
		u^{-n} \Braket{ \prod_{i=1}^n \mc{B} (\mu_i, u\mu_i) } .
	\end{align*}
	Since both $\dHdisc (\mu;u)$ and $\Braket{ \prod_{i=1}^n \mc{B} (\mu_i, u\mu_i) }$ are rational functions and they agree on a Zariski dense set, they coincide for all $z \in \C^n$.
\end{proof}

The rest of the section is dedicated to the proof of the commutation relation between $\mc{B}$-operators (\cref{prop:commutation}). We will pursue a similar approach to the A-case in \cite{OOP20}. Recall from that paper the algebra
\begin{equation}
	\mf{h} = \Z \braket{ S^\pm, H^\pm } /([H,S] - S) .
\end{equation}
We can see $\mf{h}$ as a subalgebra of the bi-infinite general linear algebra $\mf{a}_{\infty}$, with the identification $H = \mc{F}_1$ and $S = \alpha_{-1}$. Let $\widehat{\mf{h}}$ be its central extension induced from the one of $\widehat{\mf{a}}_{\infty}$. In fact $\widehat{\mf{h}}$ can be described explicitly as the Lie subalgebra spanned by the Okounkov--Pandharipande operators $ \mc{E}_{m,k} \coloneqq [z^k] \mc{E}_m (z)$.

Let us now define the Lie algebra $\widehat{\mf{h}}^B$ to be the subalgebra of $\hat{\mf{b}}_\infty$ spanned by $ \mc{E}_{m,k}^B \coloneqq [z^k] \mc{E}^B_m (z)$.

\begin{proposition}~
	\begin{enumerate}
		\item If $ m + k $ is even, then $ \mc{E}^B_{m,k} = 0$.

		\item The map $ p \colon \widehat{\mf{h}} \to \widehat{\mf{h}}^B \colon \mc{E}_{m,k} \mapsto \mc{E}^B_{m,k} $ is a surjective Lie algebra morphism, with right inverse $ \iota \colon \widehat{\mf{h}}^B \to \widehat{\mf{h}} \colon \mc{E}^B_{m,k} \mapsto \mc{E}_{m,k}$ for odd $ m+k$.
	\end{enumerate}
\end{proposition}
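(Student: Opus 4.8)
The plan is to treat the two parts separately: part (1) follows directly from the parity relation already available, while part (2) reduces to a careful comparison of the type-A and type-B commutation relations. For part (1), I would feed the parity relation of \cref{prop:OP:VEV} into the definition $\mc{E}^B_{m,k} = [z^k]\mc{E}^B_m(z)$. Extracting the coefficient of $z^k$ from $\mc{E}^B_m(-z) = (-1)^{m+1}\mc{E}^B_m(z)$ gives $(-1)^k\,\mc{E}^B_{m,k} = (-1)^{m+1}\,\mc{E}^B_{m,k}$, so $\mc{E}^B_{m,k}$ must vanish whenever $(-1)^k \neq (-1)^{m+1}$, that is, whenever $m+k$ is even.

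For part (2), since both algebras are spanned by their respective Okounkov--Pandharipande operators, it suffices to check that $p$ intertwines the brackets on generators, i.e. that $p\bigl([\mc{E}_{m,k},\mc{E}_{n,l}]\bigr) = [\mc{E}^B_{m,k},\mc{E}^B_{n,l}]$ for all indices. I would organise this through generating series. The map $p$ is linear over the scalar ring $\C[z,w]$ acting on the operator coefficients and sends $\mc{E}_a(u)\mapsto\mc{E}^B_a(u)$ coefficientwise, so applying it to the type-A relation
\[
[\mc{E}_m(z),\mc{E}_n(w)] = \varsigma(mw-nz)\,\mc{E}_{m+n}(z+w) + (\text{central})
\]
of \cite{OP06b} yields $\varsigma(mw-nz)\,\mc{E}^B_{m+n}(z+w) + p(\text{central})$. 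The goal is then to match this with the type-B relation \cref{eqn:spin:OP:comm}, whose transport part is the sum
\[
\tfrac12\varsigma(mw-nz)\,\mc{E}^B_{m+n}(z+w) + \tfrac{(-1)^n}{2}\varsigma(mw+nz)\,\mc{E}^B_{m+n}(z-w).
\]

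The crucial step is to show that the second B-summand reproduces the first on exactly the coefficients surviving part (1). I would apply $w\mapsto -w$ to it: oddness of $\varsigma$ turns $\varsigma(mw+nz)$ into $-\varsigma(mw-nz)$ and turns $\mc{E}^B_{m+n}(z-w)$ into $\mc{E}^B_{m+n}(z+w)$, while the extraction $[z^kw^l]$ contributes a sign $(-1)^l$. Together with the prefactor $(-1)^n$, this shows that the $[z^kw^l]$-coefficient of the second summand equals $-(-1)^{n+l}$ times that of the first; for $n+l$ odd this sign is $+1$, so the two summands coincide and their sum restores $\varsigma(mw-nz)\,\mc{E}^B_{m+n}(z+w)$, matching $p$ of the A-relation, whereas for $n+l$ even the summands cancel and both sides vanish in accordance with part (1). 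This termwise matching yields the morphism property (the case $m+n=0$ requiring the separate check below). Surjectivity is then clear, since the image contains every $\mc{E}^B_{m,k}$; and defining $\iota$ on the basis $\{\,\mc{E}^B_{m,k}\mid m+k\text{ odd}\,\}$ by $\mc{E}^B_{m,k}\mapsto\mc{E}_{m,k}$ gives $p\circ\iota = \Id$ by construction, the same comparison run backwards showing that the odd-parity operators $\mc{E}_{m,k}$ close into a Lie subalgebra of $\widehat{\mf{h}}$ onto which $\iota$ is a section.

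The main obstacle I anticipate is precisely this reconciliation of the single transport term of the A-relation with the doubled transport term of the B-relation: it becomes correct only after invoking part (1), and it is the step where the oddness of $\varsigma$ and the parity bookkeeping in $(n,l)$ are indispensable. A secondary, more routine point is the behaviour of the central extensions when $m+n=0$: one must verify that the cocycle of $\widehat{\mf{b}}_\infty$ is the image under $p$ of the cocycle of $\widehat{\mf{a}}_\infty$, so that $p(\text{central})$ produces exactly the constant term $\delta_m\,\qoppa(z)/\varsigma(z)$ built into \cref{eqn:spin:OP:comm}.
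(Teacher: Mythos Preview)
Your proposal is correct and follows essentially the same approach as the paper: part (1) from the parity relation, and part (2) by recognising the type-B commutator as the (anti)symmetrisation in one variable of the type-A expression and then matching the $[z^k w^l]$ coefficients according to the parity of $n+l$ (or $m+k$). You are slightly more explicit about the sign bookkeeping and about flagging the central-extension check at $m+n=0$, which the paper leaves implicit, but the underlying argument is identical.
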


\begin{proof}
	The first point follows immediately from the parity relations of \cref{prop:OP:VEV}. For the second point, it is clear that $ p \circ \iota = \Id_{\widehat{\mf{h}}^B}$, and this implies surjectivity of $ p$. To show these are Lie algebra morphisms, we compare commutators. From \cite[equation~(2.17)]{OP06a} and the commutation relations of \cref{prop:OP:VEV}, we obtain:
		\begin{align*}
			\bigl[ \mc{E}_m(z),\mc{E}_n(w) \bigr]
			&=
			\varsigma \big( \det \begin{bsmallmatrix} m & z \\ n & w \end{bsmallmatrix} \big) \mc{E}_{m+n}(z+w);
			\\
			\bigl[ \mc{E}^B_m(z),\mc{E}^B_n(w) \bigr]
			&=
			\frac{1}{2} \varsigma \big( \det \begin{bsmallmatrix} m & z \\ n & w \end{bsmallmatrix} \big) \mc{E}^B_{m+n}(z+w)
			+
			\frac{(-1)^{n+1}}{2}\varsigma \big( \det \begin{bsmallmatrix} m & z \\ n & -w \end{bsmallmatrix} \big) \mc{E}^B_{m+n}(z-w)
			\\
			&=
			\frac{1}{2} \varsigma \big( \det \begin{bsmallmatrix} m & z \\ n & w \end{bsmallmatrix} \big) \mc{E}^B_{m+n}(z+w)
			+
			\frac{(-1)^{m+1}}{2}\varsigma \big( \det \begin{bsmallmatrix} m & -z \\ n & w \end{bsmallmatrix} \big) \mc{E}^B_{m+n}(w-z).
		\end{align*}
		Because the second is the symmetrisation or antisymmetrisation of the first in $w$, depending on the parity of $n$ (or in $z$, depending on the parity of $m$, or both) we find that if both $m+ k$ and $n + l$ are odd, then
		\begin{equation*}
			\bigl[ p(\mc{E}_{m,k}) , p(\mc{E}_{n,l}) \bigr]
			=
			p \big( [\mc{E}_{m,k}, \mc{E}_{n,l}]\big) ,
		\end{equation*}
		because the (anti)symmetrisation preserves this coefficient. If either or both of $ m+k$ and $n+l$ are even, the left-hand side of the above equation vanishes by definition of $ \mc{E}^B_{m,k}$ or of $\mc{E}^B_{n,l}$, and the right-hand side vanishes because the (anti)symmetrisation kills that coefficient. Similarly for $\iota$. 
\end{proof}

By this proposition we can perform all of our calculations in $\mf{h}$, then push them forward to $ \widehat{\mf{h}}$ and eventually to $ \widehat{\mf{h}}^B$ along $p$. 
The computation of the commutator between $\mathcal{B}(z,uz)$ operators is achieved by computing the commutator between their leading order in the $u$-variable, and showing that there exists a dressing operator $W$ relating $\mathcal{B}(z,uz)$ and their leading order in $u$ by conjugation. The following lemma defines the dressing operator $W$.

\begin{lemma}\label{def:dressing}
	Consider the operators in $\mf{h}$ given by
	\begin{equation}
		D = S^{-1} + H,
		\qquad\quad
		\tilde{D} = S^{-1} + X, 
		\qquad\quad
		E =
		\frac{1}{3} \big(
			X^3 + S^{-1} X^2 + X S^{-1} X + X^2 S^{-1}
		\big) .
	\end{equation}
	where $X = -\frac{1}{2} \{ H, \frac{Z}{1-Z} \}$ and $Z = \frac{t}{2u} S^2$. Then
	\begin{equation}\label{eq:Dtilde:ODE}
		\frac{d \tilde{D}}{du} + \frac{1}{t} \bigl[ E,\tilde{D} \bigr] = 0,
	\end{equation}
	and the unique solution $W$ of the linear differential equation
	\begin{equation}\label{eq:W:ODE}
		\frac{dW}{du} = \frac{1}{t} WE
	\end{equation}
	such that $W|_{u=0} = 1$ is upper unitriangular and satisfies $W^{-1} D W = \tilde{D}$.
\end{lemma}

\begin{proof}
	We start with the proof of \cref{eq:Dtilde:ODE}. We simply have $\frac{d \tilde{D}}{du} = \frac{1}{2u} \{ H, \frac{Z}{(1-Z)^2} \}$. On the other hand, direct computation shows that
	\begin{equation*}
		\bigl[ E,\tilde{D} \bigr]
		=
		\frac{1}{3} \Bigl(
			2[X^2,S^{-2}] + X[S^{-1},X]S^{-1} - S^{-1}[X,S^{-1}]X
		\Bigr).
	\end{equation*}
	The commutator between $X$ and $S^{-1}$ can be simplified, using the commutation relation $[H,S^{-1}] = -S^{-1}$ and the fact that $S^{-1}$ commutes with any expression in $Z$:
	\begin{equation*}
	\begin{split}
		[X,S^{-1}]
		=
		- \frac{1}{2} \bigg[ \Big\{ H , \frac{Z}{1-Z} \Big\}, S^{-1} \bigg]
		=
		- \frac{1}{2} \Big\{ \big[ H , S^{-1} \big] , \frac{Z}{1-Z} \Big\}
		=
		S^{-1} \frac{Z}{1-Z}.
	\end{split}
	\end{equation*}
	Thus, we find $[E,\tilde{D}] = \frac{1}{3} ( 2[X^2,S^{-2}] - \{ X, S^{-2}\frac{Z}{1-Z} \} ) = \frac{1}{3} ( 2[X^2,S^{-2}] - \frac{t}{2u} \{ X, \frac{1}{1-Z} \} )$. A lengthy but straightforward computation shows that
	\begin{enumerate}
		\item $[ X^2 , S^{-2} ] = \frac{t}{u} \{ X , \frac{1}{1-Z} \}$
		\item $\{ X , \frac{1}{1-Z} \} = - \{ H, \frac{Z}{(1-Z)^2} \}$
	\end{enumerate}
	Thus, we conclude as
	\begin{equation*}
		\frac{1}{t} [E,\tilde{D}]
		=
		\frac{1}{3t} \bigg(
			-\frac{2t}{u} \Big\{ H, \frac{Z}{(1-Z)^2} \Big\}
			+ \frac{t}{2u} \Big\{ H, \frac{Z}{(1-Z)^2} \Big\}
		\bigg)
		=
		- \frac{1}{2u} \Big\{ H, \frac{Z}{(1-Z)^2} \Big\}
		=
		- \frac{d\tilde{D}}{du},
	\end{equation*}
	hence proving \cref{eq:Dtilde:ODE}. As \cref{eq:W:ODE} has no singularity at $ u=0$, there does indeed exist a unique solution such that $ W|_{u=0} = 1$. This solution is upper unitriangular because $E$ is strictly upper-triangular. On the other hand, by direct computation,
	\begin{equation*}
		\frac{d}{du} \big( W \tilde{D} W^{-1} D^{-1} \big)
		=
		W \Bigl( \frac{1}{t} [E, \tilde{D}] + \frac{d\tilde{D}}{du} \Bigr) W^{-1} D^{-1}
		=
		0 .
	\end{equation*}
	Together with $ \tilde{D}|_{u=0} = S^{-1} + H = D$ and $ W|_{u=0} = 1$, we get $ W^{-1} D W = \tilde{D}$ for all $ u$.
\end{proof}

Define the operators
\begin{equation}\label{eqn:straightB}
	\ms{B}(z)
	\coloneqq \frac{1}{u} \mc{B} (tz,uz) ,
	\qquad\qquad
	\widetilde{\ms{B}} (z)
	\coloneqq 
	\frac{1}{u} \sum_{l \in \Z} \frac{(uz)^l}{(tz'+1)_l} \alpha^B_{2l-1} .
\end{equation}
The latter is obtained from the former by selecting the leading term in the $u$-variable for each energy level of the operator. Both operators are clearly homogeneous of degree $-1$ with respect to the degree given by
\begin{equation}\label{eq:deg:utz}
	\deg u = \deg t = -\deg z = 1.
\end{equation}

\begin{theorem}\label{thm:Bdressing}
	Let $ W$ be as in \cref{def:dressing}. Then 
	\begin{equation}\label{DressingOfB}
		W^{-1} \ms{B} W = \widetilde{\ms{B}}.
	\end{equation}
\end{theorem}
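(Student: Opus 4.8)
The whole computation will be carried out in $\widehat{\mf h}$ and then transported to $\widehat{\mf h}^B$ along the surjective morphism $p$ of the preceding proposition, under which $\ms B$, $\widetilde{\ms B}$ and $W$ are the images of operators assembled from the $\mc E_{m,k}$; working on the A-side has the advantage that the bracket in \cref{prop:OP:VEV} collapses to a single term. Throughout I abbreviate $\hat B := u\,\widetilde{\ms B}(z)$ and $\hat{\ms B} := u\,\ms B(z) = \mc B(tz,uz)$, so that \cref{DressingOfB} is equivalent to $\hat{\ms B} = W\hat B W^{-1}$.

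\emph{Step 1: $\hat B$ is the leading part of $\hat{\ms B}$.} Selecting in \cref{def:B-ops} the lowest power of $u$ at each energy level $-k$, i.e. the summand $j = k'+1$, and using $\varsigma(w) = w + O(w^3)$, $\mc S(w) = 1 + O(w^2)$ and $\mc E^B_k(w)|_{w=0} = \alpha^B_k$ for $k$ odd, one finds that this leading summand is exactly $\frac{(uz)^l}{(tz'+1)_l}\,\alpha^B_{2l-1}$ with $k = 2l-1$. Hence $\hat B$ and $\hat{\ms B}$ share the same leading term for the energy grading at every nonzero energy; since $W = 1 + (\text{energy-raising})$, so does $W\hat B W^{-1}$.

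\emph{Step 2: the bare and dressed flows.} Because $H = \mc F_1$ is the energy operator, $[H,\alpha^B_{2l-1}] = -(2l-1)\alpha^B_{2l-1}$, and a termwise check gives the transparent linear flow
\begin{equation*}
    2u\,\partial_u \hat B + [H,\hat B] = \hat B .
\end{equation*}
Conjugating this relation by $W$ and feeding in the defining equation $\frac{dW}{du} = \frac1t WE$ turns it into
\begin{equation*}
    2u\,\partial_u\bigl(W\hat B W^{-1}\bigr) + \bigl[\mc G,\, W\hat B W^{-1}\bigr] = W\hat B W^{-1},
    \qquad
    \mc G := W\bigl(H - \tfrac{2u}{t}E\bigr)W^{-1}.
\end{equation*}

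\emph{Step 3: $\hat{\ms B}$ obeys the same flow up to a scalar.} Differentiating the conjugation formula $\hat{\ms B} = \frac{\mu'!}{(uz)^{\mu'}}\,\tilde C\,\alpha^B_{-\mu}\,\tilde C^{-1}$ of \cref{prop:B:operators:odd:integers}, where $\mu = tz$ and $\tilde C = e^{\alpha^B_1}e^{\frac ut \frac{\mc F^B_3}{3}}$, and using $\partial_u\tilde C = \tilde C\,\frac1t\frac{\mc F^B_3}{3}$ together with $\mu\,\alpha^B_{-\mu} = [H,\alpha^B_{-\mu}]$, reduces the claim that $\hat{\ms B}$ satisfies the Step 2 flow to the single operator identity
\begin{equation*}
    W\bigl(H - \tfrac{2u}{t}E\bigr)W^{-1}
    =
    \tilde C\bigl(H - \tfrac{2u}{t}\tfrac{\mc F^B_3}{3}\bigr)\tilde C^{-1}
    \pmod{\text{central}} .
\end{equation*}
This is where \cref{DressingDef} enters: setting $V := W^{-1}\tilde C$, one has $\frac{dV}{du} = \frac1t\bigl(V\frac{\mc F^B_3}{3} - EV\bigr)$, and the identity is established by comparing $u$-derivatives of both sides and matching at $u=0$, using $W^{-1}DW = \widetilde{D}$; at $u=0$ the two sides differ by $-\alpha^B_1$, whose bracket with every $\alpha^B_{-\mu}$ is central, so the discrepancy is harmless for the flow. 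Concretely, $\hat{\ms B}$ then satisfies $2u\,\partial_u \hat{\ms B} + [\mc G,\hat{\ms B}] = \hat{\ms B} + c$ with $c$ a multiple of the identity. By density of the odd positive integers $\mu = tz$, this holds for all $z$.

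\emph{Step 4: uniqueness.} The difference $Y := \hat{\ms B} - W\hat B W^{-1}$ thus solves $2u\,\partial_u Y + [\mc G, Y] = Y + c$. Since $\mc G = H + (\text{energy-raising})$, the bracket is triangular for the energy grading, and the nonzero-energy part of $Y$ solves the homogeneous equation; by Step 1 its leading term vanishes, so the triangular recursion forces it to be zero. The remaining identity component is the one place where $\widetilde{\ms B}$ carries no term: there one computes the constant directly, and the $\qoppa/\varsigma$ correction built into $\mc E^B_0$ in \cref{def:B-ops} supplies exactly the scalar that makes $\ms B$ and $W\widetilde{\ms B}W^{-1}$ agree, so $c=0$ and $Y\equiv 0$. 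Dividing by $u$ and applying $p$ yields \cref{DressingOfB}.

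I expect the operator identity of Step 3 to be the main obstacle: it encapsulates the entire effect of the dressing and demands the careful, lengthy commutator bookkeeping of \cref{DressingDef}, with particular attention to the central contributions so that the bare and full flows are matched exactly rather than merely up to scalars.
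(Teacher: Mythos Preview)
Your outline has the right spirit---an ODE-and-uniqueness argument---but Step~3 is a genuine gap that you yourself flag as ``the main obstacle,'' and the proof as written does not close it. The operator identity
\[
  W\bigl(H - \tfrac{2u}{t}E\bigr)W^{-1}
  \equiv
  \tilde C\bigl(H - \tfrac{2u}{t}\tfrac{\mc F^B_3}{3}\bigr)\tilde C^{-1}
  \pmod{\text{central}}
\]
is simply asserted; ``comparing $u$-derivatives and matching at $u=0$ using $W^{-1}DW=\tilde D$'' is a plan, not a verification. At $u=0$ you already observe the two sides differ by $\alpha^B_1$, which is not central; you argue its bracket with $\alpha^B_{-\mu}$ is central, but you then need this discrepancy to stay of that form for all $u$, and that is not shown. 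Your Step~4 uniqueness argument is also fragile: the flow $2u\,\partial_u Y + [\mc G,Y]=Y$ is singular at $u=0$, so ``vanishing leading term plus triangular recursion'' does not immediately force $Y=0$ without further analysis of the indicial behaviour, and the disposal of the scalar $c$ via the $\qoppa/\varsigma$ correction is only gestured at.

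The paper sidesteps all of this with one clean reduction you do not use. By homogeneity and Zariski density it suffices to take $t=1$, $z=m\in\Z^{\textup{odd}}_+$; then \cref{prop:B:operators:odd:integers} gives the factorisations $\ms B^{(m)} = (\ms B^{(1)})^m$ and $\widetilde{\ms B}^{(m)} = (\widetilde{\ms B}^{(1)})^m$, so only $m=1$ needs checking. For $O\coloneqq W^{-1}\ms B^{(1)}W$ one computes directly
\[
  \frac{dO}{du} \;=\; \bigl[O,\; E - \tfrac{1}{3}\tilde D^3\bigr],
\]
a \emph{non-singular} linear ODE in $u$, and a short explicit calculation (using $[\tilde O,X]=-Se^{u/S^2}$) shows $\tilde O\coloneqq \widetilde{\ms B}^{(1)}=Se^{u/S^2}$ satisfies the same equation with the same initial value $S$ at $u=0$. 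Uniqueness is then immediate with no singular-point analysis and no central bookkeeping. The key trick you are missing is the reduction to $m=1$: it replaces your unproved operator identity by a two-line ODE verification.
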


\begin{proof}
	As explained above, and in \cite{OOP20}, both $\ms{B}$ and $W$ are sums of monomials $H^a S^b$ with coefficients in the ring of Laurent polynomials of variables $u, t, z$ and, more-over, are homogeneous with respect to grading \cref{eq:deg:utz}. Therefore, using Zariski density, we need only prove \cref{DressingOfB} at the values $t = 1$ and $z = m \in \Z^{\textup{odd}}_{+}$. We define the operators
	\begin{equation*}
		\ms{B}^{(m)} \coloneqq u \frac{(um)^{m'}}{(m')!} \ms{B}|_{t=1,z=m},
		\qquad
		\widetilde{\ms{B}}^{(m)} \coloneqq u \frac{(um)^{m'}}{(m')!} \widetilde{\ms{B}}|_{t=1,z=m}.
	\end{equation*}
	Then by \cref{prop:B:operators:odd:integers} and direct computation,
	\begin{equation*}
		\ms{B}^{(m)} = e^{1/S} e^{uH^3/3} S^m e^{-uH^3/3} e^{-1/S}, \qquad \widetilde{\ms{B}}^{(m)} = S^m e^{um/S^2}.
	\end{equation*}
	In particular, $ \ms{B}^{(m)} = (\ms{B}^{(1)})^m $ and $ \widetilde{\ms{B}}^{(m)} = (\widetilde{\ms{B}}^{(1)})^m$, so we only need to prove that
	\begin{equation*}
	 O \coloneqq W^{-1} \ms{B}^{(1)} W = \widetilde{\ms{B}}^{(1)} \eqqcolon \tilde{O}.
	\end{equation*}
	This equation holds at $ u = 0$, as $ \ms{B}^{(1)}|_{u=0} = \widetilde{\ms{B}}^{(1)}|_{u=0} = S$ and $ W|_{u=0} = 1$. Taking the $ u $ derivative of $O$, we get
	\begin{equation*}
		\frac{dO}{du} 
		=
		[O, E] + \frac{1}{3}W^{-1} e^{1/S} e^{uH^3/3} [H^3,S] e^{-uH^3/3} e^{-1/S} W 
		=
		\big[O, E - \tfrac{1}{3} \tilde{D}^3\big] .
	\end{equation*}
	By uniqueness of solutions of linear ODEs, it is enough to show that the same equation holds for $\tilde{O}$. The left-hand side is computed as $$\frac{d\tilde{O}}{du} = S^{-1} e^{u/S^2}.$$ On the other hand $E - \frac{1}{3}\tilde{D}^3 = - \frac{1}{3} \big( S^{-3} + S^{-2}X + S^{-1}XS^{-1} + XS^{-2} \big)$. Thus,
	\begin{equation*}
		\Big[ \tilde{O} , E - \frac{1}{3}\tilde{D}^3 \Big]
		=
		- \frac{1}{3} \Big( S^{-2}[\tilde{O},X] + S^{-1}[\tilde{O},X]S^{-1} + [\tilde{O},X]S^{-2} \Big) .
	\end{equation*}
	From the commutation relation $[\tilde{O} , H] = 2u S^{-1}(1 - Z) e^{u/S^2}$, we deduce the commutation relation $[ \tilde{O} , X ] = -\frac{1}{2} \{ [ \tilde{O}, H ], \frac{Z}{1-Z} \} = -S e^{u/S^2}$. Finally we obtain
	\begin{equation*}
		\Big[ \tilde{O} , E - \frac{1}{3}\tilde{D}^3 \Big]
		=
		S^{-1}e^{u/S^2}
		= \frac{d \tilde{O}}{du}. \qedhere
	\end{equation*}
\end{proof}

\begin{lemma}
	As elements of $ \widehat{\mf{h}}^B$,
	\begin{equation}
		\bigl[ \widetilde{\ms{B}}(z),\widetilde{\ms{B}}(w) \bigr] = \frac{zw}{u} \delta(z,-w) .
	\end{equation}
\end{lemma}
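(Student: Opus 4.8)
The plan is to reduce the identity to an explicit bilateral sum and then recognise that sum as the formal $\delta$-distribution. By \cref{eqn:straightB} the operator $\widetilde{\ms{B}}(z)$ is a linear combination of the Heisenberg generators $\alpha^B_{2l-1}$, so its commutator with $\widetilde{\ms{B}}(w)$ is central and may be evaluated term by term from the canonical commutation relations $[\alpha^B_m,\alpha^B_n]=\tfrac{m}{2}\delta_{m+n}$. Only the pairs with $2l'-1=-(2l-1)$, that is $l'=1-l$, contribute, so the double sum collapses to
\begin{equation*}
	\bigl[\widetilde{\ms{B}}(z),\widetilde{\ms{B}}(w)\bigr]
	=\frac{1}{2u}\sum_{l\in\Z}(2l-1)\,U_l,
	\qquad
	U_l=\frac{z^{l}w^{1-l}}{(tz'+1)_l\,(tw'+1)_{1-l}},
\end{equation*}
and it remains to identify the right-hand side with $\tfrac{zw}{u}\,\delta(z,-w)$.

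The summands obey the first-order recurrence $w(tz'+1+l)\,U_{l+1}=z(tw'+1-l)\,U_l$, read off directly from the Pochhammer shift. Writing $a=tz'+1$ and $b=tw'+1$, a short computation (using the precise form of the Pochhammer arguments) gives the crucial cancellation $zb-w(a-1)=\tfrac12(z+w)$, from which summing the recurrence over all $l\in\Z$ and reindexing shows that the whole bilateral sum is annihilated by $z+w$; hence it is a multiple of $\delta(z,-w)$. To fix the coefficient I would split the sum at $l=0$ into two one-sided series, which are genuine formal power series in $z/w$ and in $w/z$. Re-running the same manipulation on the tail $l\ge 1$ now leaves exactly one boundary term, $z\,b\,U_0=zw$, so that $(z+w)\sum_{l\ge 1}(2l-1)U_l=2zw$; since a bounded-below series killed by $z+w$ must vanish, this tail is forced to be the expansion of $\tfrac{2zw}{z+w}$ in the region $|z|<|w|$, and symmetrically the $l\le 0$ part is the expansion of $-\tfrac{2zw}{z+w}$ in $|z|>|w|$. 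Their difference is precisely $2zw\,\delta(z,-w)$, yielding the stated normalisation $\tfrac{zw}{u}\,\delta(z,-w)$.

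The main obstacle is the formal-distribution bookkeeping rather than any single algebraic step. The bilateral sum converges in no common region, and its naive ``analytic'' resummation through a Gauss-type hypergeometric identity produces non-integral powers such as $z^{1-a}$ and is thus not a legitimate Laurent expansion; the $\delta$-function appears only as the discontinuity across $z+w=0$ of the two regional expansions. All of the normalisation is carried by the lone boundary term $U_0$ together with the cancellation $zb-w(a-1)=\tfrac12(z+w)$, and verifying that these collapse to the $t$-independent value $2zw$ is the one computation that must be performed with care.
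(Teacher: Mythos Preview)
Your argument is correct and follows essentially the same route as the paper: compute the commutator via the Heisenberg relations, split the resulting bilateral sum into two one-sided pieces, show each telescopes to $\pm\tfrac{2zw}{z+w}$ in its own region, and read off the $\delta$-function as the discontinuity. The only cosmetic difference is that the paper first sets $t=1$ by homogeneity and then splits $k-\tfrac12=(z'+k)-\tfrac{z}{2}=-(w'+1-k)+\tfrac{w}{2}$ to telescope directly, whereas you keep $t$ and package the same telescoping via the first-order recurrence $w(a+l)U_{l+1}=z(b-l)U_l$ together with $zb-w(a-1)=\tfrac12(z+w)$; summing the recurrence and reindexing is exactly what produces the paper's cancellation of the middle terms. (Note that your cancellation identity holds because the Pochhammer base is $(tz)'+1=\tfrac{tz+1}{2}$, i.e.\ the prime applies to the product $tz$ as inherited from $\ms{B}(z)=\tfrac1u\mc{B}(tz,uz)$; if one misreads it as $t\cdot z'$ the identity would fail away from $t=1$.)
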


\begin{proof}
	We may set $ t=1$ for the proof by homogeneity. By definition,
	\begin{equation*}
	\begin{split}
		 \bigl[ \widetilde{\ms{B}}(z),\widetilde{\ms{B}}(w) \bigr]
		 &=
		\frac{1}{u^2} \bigg[
			\sum_{k \in \Z} \frac{(uz)^k}{(z'+1)_k} \alpha^B_{2k-1},
			\sum_{l \in \Z} \frac{(uw)^l}{(w'+1)_l} \alpha^B_{2l-1}
		\bigg]
		\\
		&= \frac{1}{u^2}
		\sum_{k,l \in \Z} \frac{(uz)^k}{(z'+1)_k} \frac{(uw)^l}{(w'+1)_l} (k-\tfrac{1}{2}) \delta_{k+l-1}
		\\
		&=
		\frac{w}{u}\sum_{k \in \Z}(k-\tfrac{1}{2})\Big( \frac{z}{w}\Big)^k \frac{1}{(z'+1)_k} \frac{1}{(w'+1)_{1-k}} .
	\end{split}
	\end{equation*}
	Let us split this sum in positive $k$ and non-negative $k$: $[\widetilde{\ms{B}}(z),\widetilde{\ms{B}}(w) ] = C_+ + C_-$. Then
	\begin{equation*}
	\begin{split}
		\Big(1+ \frac{z}{w} \Big)C_+ 
		&=
		\Big(1+ \frac{z}{w} \Big) \frac{w}{u}\sum_{k = 1}^\infty(k-\tfrac{1}{2})\Big( \frac{z}{w}\Big)^k \frac{1}{(z'+1)_k (w'+1)_{1-k}} 
		\\
		&=
		\frac{w}{u}\sum_{k = 1}^\infty \bigg( \Big( \frac{z}{w}\Big)^k \frac{k-\tfrac{1}{2}}{(z'+1)_k (w'+1)_{1-k}} + \Big( \frac{z}{w}\Big)^{k+1} \frac{k-\tfrac{1}{2}}{(z'+1)_k (w'+1)_{1-k}} \bigg)
		\\
		&=
		\frac{w}{u}\sum_{k = 1}^\infty \bigg( \Big( \frac{z}{w}\Big)^k \Big( \frac{1}{(z'+1)_{k-1} (w'+1)_{1-k}} - \frac{z}{2(z'+1)_k (w'+1)_{1-k}}\Big) \\
		& 
		\qquad + \Big( \frac{z}{w}\Big)^{k+1}\Big( -\frac{1}{(z'+1)_k (w'+1)_{-k}} +\frac{w}{2(z'+1)_k (w'+1)_{1-k}}\Big) \bigg)
		\\
		&=
		\frac{w}{u}\sum_{k = 1}^\infty \bigg( \Big( \frac{z}{w}\Big)^k \frac{1}{(z'+1)_{k-1} (w'+1)_{1-k}} - \Big(\frac{z}{w}\Big)^{k+1} \frac{1}{(z'+1)_k (w'+1)_{-k}} \bigg)
		\\
		&= \frac{z}{u} .
	\end{split}
	\end{equation*}
	So we obtain
	\begin{equation*}
		C_+ = \frac{z}{u (1 + z/w)} = \frac{z}{u} \sum_{k=0}^\infty \Big(-\frac{z}{w}\Big)^k .
	\end{equation*}
	Similar computations show that
	\begin{equation*}
		C_- = -\frac{w}{u (1 + w/z)} = -\frac{w}{u} \sum_{l=-\infty}^0 \Big(-\frac{z}{w}\Big)^l = \frac{z}{u}\sum_{k=-\infty}^{-1} \Big( - \frac{z}{w} \Big)^k . \qedhere
	\end{equation*}
\end{proof}

We are finally ready to give a proof of the commutation relation.

\begin{proof}[Proof of \cref{prop:commutation}]
	By \cref{thm:Bdressing}, $W^{-1}\ms{B}W - \widetilde{\ms{B}} = 0 \in \mf{h}$. Clearly, as $\widehat{\mf{h}}$ is a central extension of $ \mf{h}$, we have that $ W^{-1}\ms{B}W - \widetilde{\ms{B}} $ is central in $\widehat{\mf{h}}$ and hence irrelevant for any commutator. Therefore,
	\[
		\bigl[ \widetilde{\ms{B}}(z),\widetilde{\ms{B}}(w) \bigr]
		=
		W^{-1} \bigl[ \widetilde{\ms{B}}(z),\widetilde{\ms{B}}(w) \bigr] W
		=
		\bigl[ \widetilde{\ms{B}}(z),\widetilde{\ms{B}}(w) \bigr] ,
	\]
	where the last equality follows from the centrality of the commutator. The computation holds in $ \widehat{\mf{h}}^B$ as well, because $\widehat{\mf{h}}^B$ is embedded into $ \widehat{\mf{h}}$ via $ \iota$. The commutator of two $\mc{B}$ operators is now simply obtained multiplying by $u^2$.
\end{proof}

\section{Spin Gromov--Witten invariants in operator formalism}
\label{sec:sGW:operator:form}

In this section, we will return to the localisation formula, \cref{thm:GW:localised:to:H}, and insert the obtained expression for the double Hodge integrals in terms of $\mc{B}$-operators, \cref{thm:dH:as:VEV}, in order to obtain an vacuum expectation formula for the spin GW invariants. Such a formula gives a proof of the spin GW/H correspondence for the Riemann sphere, and we will discuss the spin GW/H correspondence for a general target curve assuming a degeneration formula. The operator formalism allows for explicit computations of spin GW invariants, that recover some known formulae and more.

\subsection{The main formula}
\label{subsec:sGW:operator:formula}

\begin{proposition}\label{prop:GW:as:quadratic:vev}
	The degree $d$, $(n+m)$-point function for equivariant spin GW invariants of $(\P^1,\mc{O}(-1))$ can be expressed in Fock space as a quadratic vacuum expectation value:
	\begin{equation}
		\sGWdisc_{d}(z,w; u)
		=
		\sum_{\mu \in \OP_d} \frac{2^{\ell(\mu)}}{\mf{z}_\mu} u^{-d} 
		\Braket{
			\prod_{i=1}^n \frac{\mc{B}(tz_i,uz_i)}{u}
			e^{\alpha^B_1} e^{\frac{u}{t} \frac{\mc{F}^B_3}{3}}
			\alpha^B_{-\mu}
		}
		\Braket{
			\prod_{j=1}^m \frac{\mc{B}(-tw_j, uw_j)}{u}
			e^{\alpha^B_1} e^{-\frac{u}{t} \frac{\mc{F}^B_3}{3}}
			\alpha^B_{-\mu}
		}
		.
	\end{equation}
\end{proposition}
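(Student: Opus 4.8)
The plan is to derive the statement directly from the localisation formula \cref{thm:GW:localised:to:H} by recognising each of the two vacuum expectation values as one of the double Hodge generating series $\dHdisc(\mu,tz;\tfrac ut)$ and $\dHdisc(\mu,-tw;-\tfrac ut)$ occurring there, up to the explicit scalar weights. The only two inputs I need are \cref{thm:dH:as:VEV}, which writes $\dHdisc$ as a correlator of $\mc B$-operators, and \cref{prop:B:operators:odd:integers}, which realises $\mc B(\mu_k,\tfrac ut\mu_k)$ as a conjugate of $\alpha^B_{-\mu_k}$. Since both sides are rational in $z,w,u,t$, it suffices to match the $\mu$-summands term by term.

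First I would process the right-hand block of the first correlator. Writing $C = e^{\alpha^B_1}e^{\frac ut\frac{\mc F^B_3}{3}}$, both $\alpha^B_1$ and $\mc F^B_3$ annihilate the vacuum (the latter since $p_3(\emptyset)=0$ by \cref{prop:boson:CnJ:action}), so $C\ket0 = \ket0$. Inserting $C^{-1}C$ between consecutive factors of $\alpha^B_{-\mu} = \alpha^B_{-\mu_1}\cdots\alpha^B_{-\mu_{\ell(\mu)}}$ telescopes:
\begin{equation*}
	C\,\alpha^B_{-\mu}\ket0
	= \prod_{k=1}^{\ell(\mu)}\bigl(C\,\alpha^B_{-\mu_k}\,C^{-1}\bigr)\ket0
	= \prod_{k=1}^{\ell(\mu)}\frac{(\tfrac ut\mu_k)^{\mu_k'}}{\mu_k'!}\,\mc B\bigl(\mu_k,\tfrac ut\mu_k\bigr)\ket0,
\end{equation*}
the last equality being \cref{prop:B:operators:odd:integers} with $u$ replaced by $\tfrac ut$. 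The same computation for the second correlator, with $C'=e^{\alpha^B_1}e^{-\frac ut\frac{\mc F^B_3}{3}}$, yields $\prod_k\frac{(-\tfrac ut\mu_k)^{\mu_k'}}{\mu_k'!}\,\mc B(\mu_k,-\tfrac ut\mu_k)\ket0$.

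Next I would feed these into \cref{thm:dH:as:VEV}. The first correlator becomes $u^{-n}\prod_k\frac{(\tfrac ut\mu_k)^{\mu_k'}}{\mu_k'!}\,\Braket{\prod_{i=1}^n\mc B(tz_i,uz_i)\prod_{k=1}^{\ell(\mu)}\mc B(\mu_k,\tfrac ut\mu_k)}$, and by \cref{thm:dH:as:VEV} applied to the $\ell(\mu)+n$ arguments $(\mu_1,\dots,\mu_{\ell(\mu)},tz_1,\dots,tz_n)$ with formal parameter $\tfrac ut$ this correlator equals $(\tfrac ut)^{\ell(\mu)+n}\dHdisc(\mu,tz;\tfrac ut)$. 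One subtlety to record is that the operators here appear in the reverse order to that of \cref{thm:dH:as:VEV}; this is harmless, since by \cref{prop:commutation} the pairwise commutators are the central terms $u\,z_iz_j\,\delta(z_i,-z_j)$, which vanish whenever all arguments are distinct positive integers, so both orderings define the same rational function and agree by the density argument already used for \cref{thm:dH:as:VEV}. The second correlator is treated identically and gives $(-\tfrac ut)^{\ell(\mu)+m}\dHdisc(\mu,-tw;-\tfrac ut)$.

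Finally I would collect scalars. The factors $\prod_k\frac{(\pm\tfrac ut\mu_k)^{\mu_k'}}{\mu_k'!}$ match the Hurwitz-type weights of \cref{thm:GW:localised:to:H} verbatim, and the remaining powers of $u,t$ and the signs reduce to the identity
\begin{equation*}
	u^{-n-m}\Bigl(\tfrac ut\Bigr)^{\ell(\mu)+n}\Bigl(-\tfrac ut\Bigr)^{\ell(\mu)+m}
	= \frac{1}{t^n(-t)^m}\Bigl(\tfrac ut\Bigr)^{\ell(\mu)}\Bigl(-\tfrac ut\Bigr)^{\ell(\mu)},
\end{equation*}
which is exactly the prefactor combination of \cref{thm:GW:localised:to:H} once the global $\frac{2^{\ell(\mu)}}{\mf z_\mu}u^{-d}$ is included. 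Summing over $\mu\in\OP_d$ then identifies the quadratic correlator with $\sGWdisc_d(z,w;u)$. I expect the only real obstacle to be this bookkeeping of powers and signs together with the reordering justification; conceptually the proposition is a direct splicing of \cref{thm:GW:localised:to:H,thm:dH:as:VEV,prop:B:operators:odd:integers}, with the telescoping conjugation carrying all the structural content.
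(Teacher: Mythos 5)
Your proof is correct and follows essentially the same route as the paper's: both splice together \cref{thm:GW:localised:to:H}, \cref{thm:dH:as:VEV}, and \cref{prop:B:operators:odd:integers}, with the telescoping conjugation by $e^{\alpha^B_1}e^{\pm\frac{u}{t}\frac{\mc{F}^B_3}{3}}$ (which fixes the vacuum) carrying all the structural content. The only difference is that you run the substitutions from the quadratic vacuum expectation value back to the localisation formula, whereas the paper starts from \cref{thm:GW:localised:to:H} and substitutes forward; your scalar bookkeeping and the (harmless) reordering remark are identical in substance to what the paper does implicitly.
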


\begin{proof}
	We start from the result of the localisation calculation, \cref{thm:GW:localised:to:H}:
	\[
			\sGWdisc_{d}(z,w;u)
			=
			\sum_{\mu \in \OP_d}
				\frac{2^{\ell (\mu)}}{\mf{z}_{\mu}} \frac{u^{-d}}{t^n (-t)^{m}} \\
				\times
				\prod_{k = 1}^{\ell(\mu)} \biggl( \frac{u}{t} \frac{(\frac{u\mu_k}{t})^{\mu_k'}}{\mu_k'!} \biggr)
				\dHdisc(\mu, tz; \tfrac{u}{t} ) 
				\prod_{k = 1}^{\ell(\mu)} \biggl( - \frac{u}{t} \frac{(- \frac{u\mu_k}{t})^{\mu_k'}}{\mu_k'!} \biggr)
				\dHdisc(\mu, -tw; -\tfrac{u}{t} ) .
	\]
	Substituting the expression of $\dHdisc$ as a vacuum expectation value of $\mc{B}$-operators, i.e. \cref{thm:dH:as:VEV}, gives
	\begin{equation*}
		\begin{split}
			\sGWdisc_{d}(z,w; u)
			=
			\sum_{\mu \in \OP_d} \frac{2^{\ell(\mu)}}{\mf{z}_\mu} u^{-d}
			&
			\prod_{k = 1}^{\ell (\mu)} \frac{(\frac{u\mu_k}{t})^{\mu_k'}}{\mu_k'!}
			\Braket{
				\prod_{i=1}^n \frac{\mc{B}(tz_i,uz_i)}{u}
				\prod_{k=1}^{\ell(\mu)} \mc{B}(\mu_k,\tfrac{u\mu_k}{t})
			}
			\\
			\times &
			\prod_{k = 1}^{\ell (\mu)} \frac{(- \frac{u\mu_k}{t})^{\mu_k'}}{\mu_k'!}
			\Braket{
				\prod_{j=1}^m \frac{\mc{B}(-tw_j,uw_j)}{u}
				\prod_{k=1}^{\ell(\mu)} \mc{B}(\mu_k, -\tfrac{u\mu_k}{t})
			} .
		\end{split}
	\end{equation*}
	Now we use \cref{prop:B:operators:odd:integers} for the $\mc{B}$-operators with odd integral parameter, that is
	\begin{equation*}
		\mc{B}(\mu, u\mu)
		=
		\frac{\mu'!}{(u \mu)^{\mu'}}
		e^{\alpha^B_1} e^{u\frac{\mc{F}^B_{3}}{3}} \alpha^B_{-\mu} e^{-u\frac{\mc{F}^B_{3}}{3}} e^{-\alpha^B_1} ,
	\end{equation*}
	to obtain the result.
\end{proof}

We can reduce the product of two correlators in \cref{prop:GW:as:quadratic:vev} to a single correlator using the following decomposition of the identity.

\begin{lemma} \label{lem:vev:identity}
	The identity operator on the Fock space of type B can be written as follows:
	\begin{equation}
		\Id
		=
		\sum_{d \geq 0} \sum_{\mu \in \OP_{d}} \frac{2^{\ell(\mu)}}{\mf{z}_{\mu}}
			\Ket{\alpha^{B}_{-\mu}} \Bra{\alpha^{B}_{\mu}} .
	\end{equation}
	More specifically,
	\begin{equation}
		\mc{P}_d = \sum_{\mu \in \OP_{d}} \frac{2^{\ell(\mu)}}{\mf{z}_{\mu}} \Ket{\alpha^{B}_{-\mu}} \Bra{\alpha^{B}_{\mu}}
	\end{equation}
	is the projection on the energy $d$ subspace.
\end{lemma}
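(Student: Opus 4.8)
The plan is to work inside the even sector $\mf{F}^B_0$, which is where the bosonic structure lives: the covectors $\Bra{\alpha^B_\mu} = \bra{0}\alpha^B_\mu$ pair to zero with $\mf{F}^B_1$ (each $\alpha^B_m$ preserves the $\Z/2\Z$-grading, and $\bra{0}$ lies in the even dual sector), while the kets $\Ket{\alpha^B_{-\mu}} = \alpha^B_{-\mu}\ket{0}$ lie in $\mf{F}^B_0$; so it suffices to identify the displayed operator with the energy projection on $\mf{F}^B_0$. First I would record that $\{\Ket{\alpha^B_{-\mu}} \mid \mu\in\OP_d\}$ is a basis of the energy-$d$ subspace of $\mf{F}^B_0$. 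This is immediate from the boson--fermion correspondence (\cref{thm:BF:corresp}): under $\Phi^B$ the vacuum maps to $1$ and $\rho(\alpha^B_{-m}) = \tfrac12 t_m$, so $\alpha^B_{-\mu}\ket{0}$ corresponds to $2^{-\ell(\mu)}\, t_{\mu_1}\cdots t_{\mu_{\ell(\mu)}}$, and these monomials in the odd variables of total degree $d$ form a basis of the degree-$d$ piece of $\Gamma = \C[t_1,t_3,\dots]$, indexed precisely by $\OP_d$.

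Next I would compute the Gram pairing of this basis against the family of covectors. Using that $\alpha^B_m\ket{0} = 0$ and $\bra{0}\alpha^B_{-m} = 0$ for $m>0$, together with the canonical commutation relations $[\alpha^B_m,\alpha^B_n] = \tfrac{m}{2}\delta_{m+n}$, a bosonic Wick computation gives
\begin{equation*}
	\Bra{\alpha^B_\mu}\Ket{\alpha^B_{-\nu}}
	=
	\Braket{\alpha^B_{\mu_1}\cdots\alpha^B_{\mu_{\ell(\mu)}} \alpha^B_{-\nu_1}\cdots\alpha^B_{-\nu_{\ell(\nu)}}}
	=
	\delta_{\mu\nu}\,\frac{\mf{z}_\mu}{2^{\ell(\mu)}}\,,
\end{equation*}
since a nonzero contraction forces a bijection matching equal parts (whence $\mu=\nu$ and a factor $|\Aut(\mu)|$ of such matchings), and each contraction contributes $\tfrac{\mu_i}{2}$, so the product equals $|\Aut(\mu)|\prod_i \tfrac{\mu_i}{2} = \mf{z}_\mu/2^{\ell(\mu)}$. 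The only genuinely fiddly point is the bookkeeping of repeated parts producing the automorphism factor; everything else is formal.

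With the basis and its Gram matrix in hand the conclusion is routine. Applying the displayed operator $\mc{P}_d = \sum_{\mu\in\OP_d}\frac{2^{\ell(\mu)}}{\mf{z}_\mu}\Ket{\alpha^B_{-\mu}}\Bra{\alpha^B_\mu}$ to a basis vector $\Ket{\alpha^B_{-\nu}}$ and using the Gram computation collapses the sum to the single term $\mu=\nu$ and returns $\Ket{\alpha^B_{-\nu}}$; hence $\mc{P}_d$ acts as the identity on the energy-$d$ subspace. Moreover, for a homogeneous vector $\ket{v}$ of energy $e\neq d$ every covector $\Bra{\alpha^B_\mu}$ annihilates it, because $\alpha^B_\mu\ket{v}$ has energy $e-d\neq 0$ and only the energy-$0$ component pairs nontrivially with $\bra{0}$; thus $\mc{P}_d$ kills every other energy subspace as well as $\mf{F}^B_1$. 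Since $\mf{F}^B_0$ is the direct sum of its energy subspaces, $\mc{P}_d$ is exactly the projection onto the energy-$d$ piece, and summing over $d\geq 0$ yields $\Id$, proving both assertions.
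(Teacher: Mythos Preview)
Your proof is correct and takes a genuinely different route from the paper's. The paper argues via representation theory of the Sergeev group: starting from the strict-partition resolution $\Id = \sum_{\lambda\in\SP_d}\ket{\lambda}\bra{\lambda}$, it inserts the character orthogonality relation $\sum_{\mu\in\OP_d}\frac{2^{-\ell(\mu)}}{\mf{z}_\mu}2^{-\frac{p(\lambda_1)+p(\lambda_2)}{2}}\zeta^{\lambda_1}_\mu\zeta^{\lambda_2}_\mu=\delta_{\lambda_1\lambda_2}$ and then invokes \cref{prop:boson:CnJ:action} to recognise the resulting sums as $\ket{\alpha^B_{-\mu}}$ and $\bra{\alpha^B_\mu}$. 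You instead bypass the Sergeev characters entirely: the boson--fermion correspondence tells you the bosonic kets are a basis, and a direct Heisenberg Wick computation gives the Gram matrix $\Braket{\alpha^B_\mu\alpha^B_{-\nu}}=\delta_{\mu\nu}\,\mf{z}_\mu/2^{\ell(\mu)}$, from which the dual-basis identity is immediate. Your argument is more self-contained and elementary, needing only the CCR $[\alpha^B_m,\alpha^B_n]=\tfrac{m}{2}\delta_{m+n}$ rather than any character theory; the paper's route, on the other hand, makes explicit the change of basis between $\ket{\lambda}$ and $\ket{\alpha^B_{-\mu}}$ via Sergeev characters, which is conceptually closer to the spin-Hurwitz context. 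Both proofs tacitly restrict to $\mf{F}^B_0$, which you flag explicitly and the paper leaves implicit.
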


\begin{proof}
	The proof follows from orthogonality of characters of the Sergeev group\footnote{
	Recall that the irreducible characters of the Sergeev group satisfy $\braket{\zeta^{\lambda_1},\zeta^{\lambda_2}} = 2^{(p(\lambda_1) + p(\lambda_2))/2} \delta_{\lambda_1,\lambda_2}$, where the product is defined as
	\begin{equation*}
		\braket{f, g} \coloneqq \sum_{\mu \in \OP_d} \frac{2^{-\ell(\mu)}}{\mf{z}_{\mu}} f_{\mu} g_{\mu}
	\end{equation*}
	for all $f,g$ in the spin class algebra $\mc{Z}_d$, that is the even part of the centre of the Sergeev algebra. See \cite{GKL21} and references therein.
	}:
	\begin{align*}
		\Id
		& =
			\sum_{d \geq 0} \sum_{\lambda \in \SP_{d}} \ket{\lambda} \bra{\lambda} \\
		& =
			\sum_{d \geq 0} \sum_{\lambda_1, \lambda_2 \in \SP_{d}} 
		\Biggl(
			\sum_{\mu \in \OP_d} \frac{2^{-\ell(\mu)} 2^{-\frac{p(\lambda_1) + p(\lambda_2)}{2}}}{\mf{z}_{\mu}}
			\zeta_{\mu}^{\lambda_1} \zeta_{\mu}^{\lambda_2}
		\Biggr) 
		\ket{\lambda_1} \bra{\lambda_2}
		\\
		& =
		\sum_{d \geq 0} \sum_{\mu \in \OP_d} \frac{2^{\ell(\mu)}}{\mf{z}_{\mu}}
		\Biggl(
			\sum_{\lambda_1 \in \SP_{d}} 
				2^{-\ell(\mu)} 2^{-\frac{p(\lambda_1)}{2}} \zeta_{\mu}^{\lambda_1} 
				\ket{\lambda_1} 
		\Biggr)
		\Biggl(
			\sum_{\lambda_2 \in \SP_{d}} 
			2^{-\ell(\mu)} 2^{-\frac{p(\lambda_2)}{2}} \zeta_{\mu}^{\lambda_2} 
			\bra{\lambda_2}
		\Biggr) ,
	\end{align*}
	which is equivalent to the statement by \cref{prop:boson:CnJ:action}.
\end{proof}

Combining \cref{prop:GW:as:quadratic:vev,lem:vev:identity}, we find that
\begin{equation}
	\begin{split}
		\sGWdisc_{d}(z,w; u)
		& =
		u^{-d}
		\Braket{
			\prod_{i=1}^n \frac{\mc{B}(tz_i,uz_i)}{u}
			e^{\alpha^B_1} e^{\frac{u}{t} \frac{\mc{F}_3^B}{3}} \mc{P}_d e^{-\frac{u}{t} \frac{\mc{F}_3^B}{3}} e^{\alpha^B_{-1}}
			\prod_{j=1}^m \frac{\mc{B}(-tw_j,uw_j)^*}{u}
		}
		\\
		& =
		u^{-d}
		\Braket{
			\prod_{i=1}^n \frac{\mc{B}(tz_i,uz_i)}{u}
			e^{\alpha^B_1} \mc{P}_d e^{\alpha^B_{-1}}
			\prod_{j=1}^m \frac{\mc{B}(-tw_j,uw_j)^*}{u}
		} ,
	\end{split}
\end{equation}
where in the second line we used that $\mc{F}^B_3$ commutes with $\mc{P}_d$. Let us define
\begin{equation}
	\sGWdisc(z,w;u,q) = \sum_{d=0}^\infty \sGWdisc_d (z,w;u) q^d ,
\end{equation}
and consider the operators $\ms{B}$ defined in the previous section (together with their adjoint)
\begin{align}
	\ms{B}(z) 
	&\coloneqq
	\frac{\mc{B}(tz,uz)}{u} ,
 &
	\ms{B}(z)^{\star} 
	&\coloneqq
	\ms{B}(z)^* \big|_{t \mapsto -t} ,
	\\
	\ms{B}_k
	&\coloneqq [z^{k+1}] \ms{B}(z) ,
	&
	\ms{B}^\star_k
	&\coloneqq [z^{k+1}] \ms{B}(z)^\star .
\end{align}
Then we obtain the final expression of for spin GW invariants as a unique vacuum expectation value.

\begin{theorem}\label{thm:sGW:as:vev}
	The $(n+m)$-point function for equivariant spin GW invariants of $(\P^1,\mc{O}(-1))$ can be expressed as a single vacuum expectation value:
	\begin{equation}
		\sGWdisc(z,w;u,q)
		=
		\Braket{
			\prod_{i=1}^n \ms{B}(z_i)
			e^{\alpha^B_1} \Bigl( \frac{q}{u} \Bigr)^H e^{\alpha^B_{-1}}
			\prod_{j=1}^m \ms{B}(w_j)^{\star}
		} .
	\end{equation}
	Here $H$ is the energy operator.
\end{theorem}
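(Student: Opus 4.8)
The plan is to build the single vacuum expectation value by gluing the two correlators of \cref{prop:GW:as:quadratic:vev} along the energy-$d$ projector furnished by \cref{lem:vev:identity}, and then to repackage the resulting per-degree identities into the generating series in $q$, recognising the weight $(q/u)^d$ as the action of $(q/u)^H$. Throughout I would work with the formal adjoint (transpose) anti-involution $(\cdot)^{*}$ on $\Cl^B$, under which the vacuum expectation value is invariant, $\braket{O}=\braket{O^{*}}$, and which satisfies $(\alpha^B_m)^{*}=\alpha^B_{-m}$ and $(\mc{F}^B_3)^{*}=\mc{F}^B_3$ (the latter being diagonal in the $\ket{\lambda}$ basis with eigenvalues $p_3(\lambda)$).

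First I would recast the two factors of \cref{prop:GW:as:quadratic:vev} as pairings with $\ket{\alpha^B_{-\mu}}=\alpha^B_{-\mu}\ket{0}$. Writing $\ms{B}(z_i)=\mc{B}(tz_i,uz_i)/u$, the first factor is $\Braket{0|\prod_i\ms{B}(z_i)\,e^{\alpha^B_1}e^{\frac{u}{t}\frac{\mc{F}^B_3}{3}}|\alpha^B_{-\mu}}$, and the second is the analogous quantity $\Braket{0|X|\alpha^B_{-\mu}}$ with $X=\prod_j\frac{\mc{B}(-tw_j,uw_j)}{u}\,e^{\alpha^B_1}e^{-\frac{u}{t}\frac{\mc{F}^B_3}{3}}$. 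The crucial move is to apply $(\cdot)^{*}$ to the second scalar, turning it into a pairing with the bra $\bra{\alpha^B_\mu}=\bra{0}\alpha^B_\mu$: using $(\alpha^B_{-\mu})^{*}=\alpha^B_\mu$ and $(e^{\alpha^B_1})^{*}=e^{\alpha^B_{-1}}$ one gets $\Braket{0|X|\alpha^B_{-\mu}}=\Braket{\alpha^B_\mu|X^{*}|0}$, where $X^{*}=e^{-\frac{u}{t}\frac{\mc{F}^B_3}{3}}e^{\alpha^B_{-1}}\prod_j\frac{\mc{B}(-tw_j,uw_j)^{*}}{u}$. Here the reflection is absorbed precisely by the definition $\ms{B}(w_j)^{\star}=\ms{B}(w_j)^{*}|_{t\mapsto-t}=\frac{\mc{B}(-tw_j,uw_j)^{*}}{u}$, so that $X^{*}$ is expressed in the starred operators.

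With both factors now carrying a $\ket{\alpha^B_{-\mu}}$ on one side and a $\bra{\alpha^B_\mu}$ on the other, I would invoke \cref{lem:vev:identity} in the form $\sum_{\mu\in\OP_d}\frac{2^{\ell(\mu)}}{\mf{z}_\mu}\ket{\alpha^B_{-\mu}}\bra{\alpha^B_\mu}=\mc{P}_d$ to collapse the sum over odd partitions into one correlator with $\mc{P}_d$ inserted between the two halves:
\[
\sGWdisc_d(z,w;u)=u^{-d}\Braket{\prod_{i=1}^n\ms{B}(z_i)\,e^{\alpha^B_1}\,e^{\frac{u}{t}\frac{\mc{F}^B_3}{3}}\mc{P}_d\,e^{-\frac{u}{t}\frac{\mc{F}^B_3}{3}}e^{\alpha^B_{-1}}\prod_{j=1}^m\ms{B}(w_j)^{\star}}.
\]
Since $\mc{F}^B_3\in\hat{\mf{b}}_\infty$ is energy-preserving it commutes with $\mc{P}_d$, so the two factors $e^{\pm\frac{u}{t}\frac{\mc{F}^B_3}{3}}$ cancel across the projector, recovering the combined expression already displayed before the theorem.

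Finally I would multiply by $q^d$ and sum over $d\ge 0$. Because $\mc{P}_d$ projects onto the energy-$d$ eigenspace of $H$, one has $\sum_{d\ge 0}(q/u)^d\mc{P}_d=(q/u)^H$, and pulling this sum inside the correlator yields exactly the claimed single vacuum expectation value. The one genuinely delicate point is the adjoint bookkeeping of the second paragraph: one must check that $(\cdot)^{*}$ sends the second correlator to a bra $\bra{\alpha^B_\mu}$ \emph{and} simultaneously converts each $\mc{B}$-factor over $\infty$ into the starred operator $\ms{B}(w_j)^{\star}$, so that the decomposition of \cref{lem:vev:identity} applies verbatim; the invariance $\braket{O}=\braket{O^{*}}$ of the vacuum expectation and the identities $(\alpha^B_m)^{*}=\alpha^B_{-m}$, $(\mc{F}^B_3)^{*}=\mc{F}^B_3$ are what make this reversal harmless. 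Everything else is a direct reorganisation of the already-established formulae.
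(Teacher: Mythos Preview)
Your argument is correct and follows essentially the same route as the paper: combine the two correlators of \cref{prop:GW:as:quadratic:vev} via the energy-$d$ resolution of the identity from \cref{lem:vev:identity}, cancel the commuting $e^{\pm\frac{u}{t}\mc{F}^B_3/3}$ across $\mc{P}_d$, and then sum $\sum_{d\ge 0}(q/u)^d\mc{P}_d=(q/u)^H$. You are more explicit than the paper about the adjoint step that turns the second correlator into a bra so that \cref{lem:vev:identity} applies and produces the $\ms{B}(w_j)^\star$ on the right; just note that the anti-involution reverses the order of the $w$-factors, which is harmless since the invariants are symmetric in the $w_j$.
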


The proof of \cref{thm:sGW:as:vev:intro} now follows immediately.

\begin{proof}[Proof of \cref{thm:sGW:as:vev:intro}]
	Compare \cref{thm:sGW:as:vev} and \cref{def:nPlusmPointGW} to \cref{def:EquivTauFn}, the equivariant tau-func\-tion. The factors $-8$, $-4$, and $ 2$ in front of $ u$, $q$, and $ x$, $ x^\star$, respectively, in \cref{def:EquivTauFn} correspond to the factor $ (-1)^{g-1+d} 2^{3g-3+2d+n+m}$ in \cref{def:nPlusmPointGW}.
\end{proof}

As a consequence, we obtain an expression of the non-equivariant spin GW invariants as a vacuum expectation value.

\begin{proposition}\label{cor:GW:H:corr}
	The disconnected, degree $d$, $n$-point, stationary spin GW invariants of $(\P^1,\mc{O}(-1))$ with no degree zero components can be expressed in Fock space as:
	\begin{equation}
		\Braket{ \tau_{k_1} \cdots \tau_{k_n} }_{\emptyset,g,d}^{\bullet,\P^1,\mc{O}(-1)}
		=
		2^d
		\prod_{i=1}^n \frac{k_i!}{(-2)^{k_i}} [ z_i^{2k_i+1} ]
		\Braket{
			\frac{(\alpha_{1}^B)^d}{d!} \prod_{i=1}^n \hat{\mc{E}}_0^B(z_i) \frac{(\alpha_{-1}^B)^d}{d!}
		} .
	\end{equation}
	In particular, the right-hand side can be computed using the algorithm described in \cite[section 6.3]{GKL21}. 
\end{proposition}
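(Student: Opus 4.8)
The plan is to derive the identity from the main vacuum-expectation formula \cref{thm:sGW:as:vev} by passing to the non-equivariant limit $t\to 0$, and to identify the limit with the spin Hurwitz vacuum expectation of \cref{MixedCompHNAsVEV}.

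First I would specialise \cref{thm:sGW:as:vev} to $m=0$, so that all insertions sit over the fixed point $0$. By the restriction displayed after \cref{def:nPlusmPointGW}, setting $t=0$ turns $\ev_i^\ast(\mathbf 0)$ into $\ev_i^\ast(\omega)$, the stationary insertion. Extracting the coefficient of $q^d$ by means of the energy projection $\mc P_d$ (exactly as in the computation preceding \cref{thm:sGW:as:vev}) and using $\mc P_d\, e^{\alpha^B_{-1}}\ket 0=\tfrac1{d!}(\alpha^B_{-1})^d\ket 0$ writes the degree-$d$ generating series, before the limit, as $u^{-d}\Braket{\prod_i \ms B(z_i)\,e^{\alpha^B_1}\mc P_d\,e^{\alpha^B_{-1}}}$.

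The substantive step is the computation of $\lim_{t\to0}\ms B(z)=\lim_{t\to0}\tfrac1u\mc B(tz,uz)$ from \cref{def:B-ops}. As $t\to0$ one has $(tz)'\to-\tfrac12$, $\mc S(w)^{tz}\to1$ and $e^{uz(tz)^2/12}\to1$, and the Gamma-quotient simplifies by Legendre duplication; this degenerates $\ms B(z)$ into a series in the operators $\hat{\mc E}^B_k(w)$. The claim to be verified is that, once the $\emptyset$-condition is imposed (disconnected maps with no contracted component), only the energy-preserving, correction-free operator $\hat{\mc E}^B_0$ survives: the constant term distinguishing $\mc E^B_0$ from $\hat{\mc E}^B_0$ and the energy-lowering pieces $k>0$ are precisely what encode degree-zero (contracted) components, which honest covers do not possess. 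Converting the descendent insertion $\tau_{k_i}$—the coefficient $[z_i^{k_i+1}]$ of $\tfrac{z_i}{1-z_i\psi_i}$—into the spectral extraction $[z_i^{2k_i+1}]$ of $\hat{\mc E}^B_0(z_i)$, and collecting the powers of $2$, the signs and the factorials produced by the above limit, should yield exactly the prefactor $2^d\prod_i\tfrac{k_i!}{(-2)^{k_i}}$ and the announced vacuum expectation.

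As a consistency check and to make the link with Hurwitz theory explicit, note that by \cref{MixedCompHNAsVEV} together with multilinearity (\cref{eqn:H:def,eqn:completed:cycle}) the right-hand side equals $\prod_i\tfrac{(-1)^{k_i}k_i!}{(2k_i)!}\,H^\bullet_d(\P^1,\mc O(-1);\bar c_{2k_1+1},\dots,\bar c_{2k_n+1})=H^\bullet_d\!\bigl(\P^1,\mc O(-1);\tfrac{(-1)^{k_1}k_1!}{(2k_1)!}\bar c_{2k_1+1},\dots\bigr)$, which is precisely the right-hand side of the spin GW/H correspondence; thus the proposition is that correspondence in disconnected, no-degree-zero form, and it is this recognition that will feed the proof of \cref{thm:spin:GW:H:P1:intro}. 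I expect the main obstacle to be the $t\to0$ degeneration of the $\mc B$-operators: showing rigorously that the $\mc E^B_0$-correction and the $k>0$ contributions are exactly the degree-zero components excluded by the $\emptyset$-subscript—so that the limit lands on $\hat{\mc E}^B_0$ and no genus-shifting terms survive—while the tracking of the numerical prefactors is routine.
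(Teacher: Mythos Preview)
Your overall plan---specialise \cref{thm:sGW:as:vev} to $m=0$, take the $t\to 0$ limit of $\ms B(z)$, and extract the coefficient of $q^d$---matches the paper's. But your proposed mechanism for reducing the limit to $\hat{\mc E}^B_0$ is incorrect, and this is the heart of the computation.

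You claim that at $t=0$ the operator $\ms B(z)$ becomes a series in $\hat{\mc E}^B_k$ and that the ``energy-lowering pieces $k>0$'' are what encode degree-zero components, hence can be discarded under the $\emptyset$-condition. This is not right. At $t=0$ the factor $\Gamma(z'+1)/\Gamma(z+k+1)$ becomes $\Gamma(\tfrac12)/\Gamma(k+1)$, which kills the terms with $k<0$ (one checks the finitely many surviving residues contribute only to negative powers of $z$ and so never affect $[z^{2k_i+1}]$). The terms with $k\ge 0$ are \emph{not} discarded: they resum exactly to a conjugation,
\[
\sum_{k\ge 0}\frac{\varsigma(w)^k}{k!}\,\mc E^B_k(w)\;=\;e^{\alpha^B_1}\,\mc E^B_0(w)\,e^{-\alpha^B_1},
\]
which is an identity from the commutation relations in \cref{prop:OP:VEV}. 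After Legendre duplication one finds $[z^{k+1}]\,\ms B(z)\big|_{t=0,u=1}=2^{2k+1}k!\,[z^{2k+1}]\,e^{\alpha^B_1}\mc E^B_0(z)e^{-\alpha^B_1}$. Plugging this into the vacuum expectation of \cref{thm:sGW:as:vev}, the outer $e^{\pm\alpha^B_1}$'s telescope with the $e^{\alpha^B_1}$ already present, and extracting $[q^d]$ via $\mc P_d$ produces $\tfrac{1}{(d!)^2}(\alpha^B_1)^d\cdots(\alpha^B_{-1})^d$. Only at the very end does one replace $\mc E^B_0$ by $\hat{\mc E}^B_0$; that single constant correction $\qoppa(z)/\varsigma(z)$ is the only piece that encodes degree-zero components, and dropping it implements the $\emptyset$-subscript. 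So the $k>0$ terms are essential structural data (they rebuild the conjugation that cancels against the $e^{\alpha^B_1}$ in the main formula), not degree-zero noise to be thrown away.
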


\begin{proof}
	By definition of the $\mc{B}$-operators,
	\begin{equation*}
		\ms{B}(z)\big|_{(t,u)=(0,1)}
		=
		\sum_{k \in \Z} \sum_{j \ge k' + 1} \frac{\Gamma(\frac{1}{2}) \Gamma(2j)}{\Gamma(k+1)\Gamma(j + \frac{1}{2})}
			z^{j} [w^{2j-1}] \varsigma(w)^k \mc{E}^B_k(w) .
	\end{equation*}
	The $k$-sum can be restricted to $k \ge 0$ due to vanishing of $1/\Gamma(k+1)$, up to a certain amount of terms. In fact for some terms the Gamma function $\Gamma(2j)$ at the numerator simplifies the zero with a corresponding pole. We now show that these terms are finitely many, and that they do not contribute to the GW correlator. Let us analyse the case for $k$ negative and odd, the case for $k$ negative and even is similar. We have
	\begin{equation*}
		\frac{\Gamma(\frac{1}{2})\Gamma(2j)}{\Gamma(k+1)\Gamma(j + \frac{1}{2})}
		=
		\frac{\sqrt{\pi}}{\Gamma(j + \frac{1}{2})} \prod_{l=2}^{2j}(k + l) .
	\end{equation*}
	It is immediate to see that the other Gamma function at the denominator cannot contribute a zero (i.e., in the case $k$ negative and even, the argument is a half-integer as well). Notice that for fixed $k$, at least one of the factors $(k+l)$ vanishes for $j$ big enough, hence only finitely many $j$-terms can contribute. Let us now show that they do not contribute to the GW correlator. The term above vanishes as $l = -k > 0$, that is, whenever $j \geq 0$. Hence the surviving terms have $j < 0$, where $j$ is the resulting power of $z_i$ for that term. Therefore, these terms are never selected when taking a GW correlator, which corresponds to the coefficient of strictly positive powers for all the $z_i$.

	Now, by swapping the sums one obtains
	\begin{equation*}
		\ms{B}(z)\big|_{(t,u)=(0,1)}
		=
		\sum_{j \ge 1} \frac{\Gamma(\frac{1}{2}) \Gamma(2j)}{\Gamma(j + \frac{1}{2})}
			z^{j} [w^{2j-1}] \sum_{k \geq 0} \frac{\varsigma(w)^k}{k!} \mc{E}^B_k(w) ,
	\end{equation*}
	where the condition $k \leq 2j - 1$ can be disregarded as $\varsigma(w) = w + O(w^3)$. On the other hand, a simple application of the commutation relations given in \cref{prop:OP:VEV} shows that
	\begin{equation*}
		e^{\alpha^B_1} \mc{E}^B_0(w) e^{-\alpha^B_1}
		=
		\sum_{k \ge 0} \frac{\varsigma(w)^k}{k!} \mc{E}^B_k(w) .
	\end{equation*}
	Hence, we obtain the following expression for the non-equivariant limit of the $\ms{B}$-operators:
	\begin{equation*}
		\ms{B}(z)\big|_{(t,u)=(0,1)}
		=
		\sum_{j \ge 1} \frac{\Gamma(\frac{1}{2}) \Gamma(2j)}{\Gamma(j + \frac{1}{2})}
			z^{j} [w^{2j-1}] e^{\alpha^B_1} \mc{E}^B_0(w) e^{-\alpha^B_1} .
	\end{equation*}
	Moreover, we can use the Legendre duplication formula $2^{2x-1} \Gamma(x) = \sqrt{\pi} \frac{\Gamma(2x)}{\Gamma(x + \frac{1}{2})}$ to finally obtain
	\begin{equation*}
		[z^{k+1}] \ms{B}(z)\big|_{(t,u)=(0,1)}
		=
		2^{2k+1} k! [z^{2k+1}] e^{\alpha^B_1} \mc{E}^B_0(z) e^{-\alpha^B_1} .
	\end{equation*}
	Thus, \cref{thm:sGW:as:vev} implies that
	\begin{equation*}
		(-1)^{g-1+d} 2^{3g-3+n+2d}
		\Braket{ \tau_{k_1} \cdots \tau_{k_n} }_{\emptyset,g,d}^{\bullet,\P^1,\mc{O}(-1)}
		=
		\frac{1}{(d!)^2}
		\prod_{i=1}^n 2^{2k_i + 1} k_i! [ z_i^{2k_i+1} ]
		\Braket{
			(\alpha_{1}^B)^d \prod_{i=1}^n \hat{\mc{E}}_0^B(z_i) (\alpha_{-1}^B)^d
		} ,
	\end{equation*}
	where we changed from $ \mc{E}_0$ to $ \hat{\mc{E}}_0$ in order to exclude any degree zero components. A simplification of the prefactors gives the statement, together with the fact that $\sum_i k_i = g - 1 + d$.
\end{proof}

We can now compare this to formulae for Hurwitz numbers to prove \cref{thm:spin:GW:H:P1:intro}.

\begin{proof}[Proof of \cref{thm:spin:GW:H:P1:intro}]
	Compare \cref{prop:mixed:HN:VEV} to \cref{cor:GW:H:corr}, and convert to connected invariants. For this conversion, use that degree zero Hurwitz numbers are trivial: there is exactly one ramified (spin) cover of $ \P^1$ of degree $0$, and it has a disconnected (because empty) source.
\end{proof}

\subsection{Towards the full spin GW/H correspondence}
\label{subsec:full:spin:GW:H}

Here we prove \cref{thm:full:spin:GW:H}: the degeneration formula for spin GW invariants and the spin GW/H correspondence for the Riemann sphere imply the spin GW/H correspondence in full generality. The proof is adapted from \cite{OP06a} for the non-spin case.

Denote $\overline{c}_{\mu} \coloneqq \prod_{i=1}^{\ell(\mu)} \overline{c}_{\mu_i}$ where the $c_{\mu_i}$ are the spin completed cycles as introduced in \cref{eqn:completed:cycle}. The vectors $\overline{c}_{\mu}$ for all odd partitions $\mu$ form a basis of $\C\{ \OP \}$. Besides, using \cref{conj:degen:sGW}, denote:
\begin{equation}
	\widetilde{c}_{k} \coloneqq \sum_{\mu \in \OP} \widetilde{\kappa}_{k,\mu} \cdot \mu \in \C \{ \OP \}
	\qquad\quad \text{and} \qquad\quad
	\widetilde{c}_{\mu} \coloneqq \prod_{i=1}^{\ell(\mu)} \widetilde{c}_{\mu_i} .
\end{equation}
The first property of the numbers $\widetilde{\kappa}_{k,\mu}$ of \cref{conj:degen:sGW} implies that the elements $\widetilde{c}_{\mu}$ also form a basis. Moreover, $\widetilde{c}_{\mu} - \overline{c}_{\mu}$ is a linear combination of partitions of size smaller than $|\mu|$. Thus the transition matrix between these two basis is unitriangular. Now, for all $\xi \in \C$, we define the following quadratic form on the space $\Gamma$ of supersymmetric functions:
\begin{equation}
	q_\xi(\phi,\psi) \coloneqq \sum_{\lambda\in \SP } \xi^{|\lambda|} 2^{-p(\lambda)}{\dim}(V^\lambda) \phi({\lambda}) \psi({\lambda}) .
\end{equation}
The sum is convergent, as it is finite if $\phi$ and $\psi$ have bounded degrees, and for all $\xi \in \R_{+}$ the quadratic form $q_{\xi}$ is definite positive. Moreover, the second conjectural property of the numbers $\widetilde{\kappa}_{k,\mu}$ implies that 
\begin{equation}
	q_\xi(\widetilde{c}_{\mu},\widetilde{c}_{\nu}) = q_\xi(\overline{c}_{\mu},\overline{c}_{\nu}) ,
\end{equation}
for all odd partitions $\mu$ and $\nu$ (here we have implicitly used the identification of $\Gamma$ and $\C\{ \OP \}$). Therefore the transition matrix between the $\widetilde{c}_{\nu}$ and the $\overline{c}_{\mu}$ is unitriangular and orthonormal (for any positive $\xi \in \R_{+}$), thus it is the identity.

\subsection{Some explicit formulae}
\label{subsec:expl:formulae}

As an application of \cref{cor:GW:H:corr}, we give a new formula to compute correlators for $n = 1$ and arbitrary degree, and closed formulae for degree $d = 1,2$ and arbitrary $n$ recovering the result of Kiem--Li and Lee \cite{KL11a,KL11b,Lee13} originally conjectured by Maulik--Pandharipande \cite[section~2.4]{MP08} (see also \cite{KT17} for an approach using stable pairs). With similar computations, we prove a new formula in degree $d = 3$ and arbitrary $n$.

\begin{corollary}[{$1$-point spin GW}] \label{cor:sGW:n=1}
	Disconnected, $1$-point, stationary spin GW invariants of $(\P^1,\mc{O}(-1))$ with no degree zero components can be computed by
	\begin{equation}
		\Braket{ \tau_{k} }_{\emptyset,g,d}^{\bullet,\P^1,\mc{O}(-1)}
		=
		\frac{2^d k!}{( d! )^2} (-2)^{-k} [ z^{2k+1} ]
		\mc{U}_d(z) ,
		\qquad \qquad \text{for} \;\; d \geq 1 ,
	\end{equation}
	where $\mc{U}_d(z)$ is the first element of the vector computed as
	\begin{equation}\label{eqn:sGW:n=1}
			\begin{pmatrix}
			\mc{U}_d(z) \\
			\mc{V}_d(z)
		\end{pmatrix}
		=
		\left(
			\prod_{k=0}^{d-2} A_{d-k}(z)
		\right)
		\begin{pmatrix}
			\varsigma(z)\qoppa(z) \\
			\qoppa(z)
		\end{pmatrix}
		+
		\sum_{m=0}^{d-2} \left(
			\prod_{k=0}^{m-1} A_{d-k}(z)
		\right)
		t_{d-m}(z)
	\end{equation}
	with 
	\begin{equation}
		A_{p}(z)
		=
		\begin{pmatrix}
			\varsigma(z)^2 + p & (p-1)\varsigma(z)
			\\
			\varsigma(z) & p-1
		\end{pmatrix},
		\qquad 
		t_{p}(z)
		=
		\begin{pmatrix}
			(p-1) \varsigma(z)\qoppa(z)
			\\
			(p-1) \qoppa(z)
		\end{pmatrix}.
	\end{equation}
The product of matrices is to be taken with the first-indexed matrix on the left.
\end{corollary}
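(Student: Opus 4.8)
The plan is to recognise, by specialising \cref{cor:GW:H:corr} to $n=1$ and matching the explicit prefactors $2^d k!/(d!)^2(-2)^{-k}$ on both sides, that the function $\mc{U}_d(z)$ in the statement is the single correlator
\[
	\mc{U}_d(z) = \Braket{ (\alpha^B_1)^d\, \hat{\mc{E}}^B_0(z)\, (\alpha^B_{-1})^d } ,
\]
so that the corollary amounts to evaluating this vacuum expectation for every $d$. I would do this by producing a recursion in $d$, commuting one Heisenberg mode at a time through the central insertion and through the remaining creation operators.

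To close the recursion I would introduce the auxiliary correlator
\[
	\mc{V}_d(z) = \Braket{ (\alpha^B_1)^{d-1}\, \mc{E}^B_1(z)\, (\alpha^B_{-1})^d } ,
\]
in which the insertion $\hat{\mc{E}}^B_0$ is replaced by the neighbouring operator $\mc{E}^B_1$ and one creation mode is dropped to keep the energy balanced. The only tools needed are the commutators obtained by specialising \cref{prop:OP:VEV} and using $\alpha^B_{\pm1}=\mc{E}^B_{\pm1}(0)$, namely \([\alpha^B_1,\mc{E}^B_m(z)]=\varsigma(z)\,\mc{E}^B_{m+1}(z)\) and \([\mc{E}^B_m(z),\alpha^B_{-1}]=\varsigma(z)\,\mc{E}^B_{m-1}(z)\), together with the Heisenberg relation \([\alpha^B_1,\alpha^B_{-1}]=\tfrac12\), the annihilation properties \(\alpha^B_1\ket{0}=0\) and \(\bra{0}\alpha^B_{-1}=0\), and the vacuum value \(\langle \mc{E}^B_m(z)\rangle=\delta_{m,0}\,\qoppa(z)/\varsigma(z)\).

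Carrying this out: for $\mc{U}_d$ I would commute the innermost $\alpha^B_1$ rightwards; passing it across $\hat{\mc{E}}^B_0$ produces a term proportional to $\varsigma(z)\,\mc{V}_d$, while passing it across $(\alpha^B_{-1})^d$ contracts through the Heisenberg relation to a multiple of $\mc{U}_{d-1}$. For $\mc{V}_d$ the decisive step is to commute the insertion $\mc{E}^B_1$ \emph{to the right} through the creation modes, so that its index is \emph{lowered} to $\mc{E}^B_0$ rather than raised to $\mc{E}^B_2$: this is exactly what keeps the system two-dimensional. One contraction feeds back a multiple of $\mc{V}_{d-1}$, the surviving $\mc{E}^B_0$ produces a multiple of $\varsigma(z)\,\mc{U}_{d-1}$, and the constant part of $\mc{E}^B_0=\hat{\mc{E}}^B_0+\qoppa/\varsigma$ contributes an inhomogeneous term proportional to $\langle (\alpha^B_1)^{d-1}(\alpha^B_{-1})^{d-1}\rangle$. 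Eliminating $\mc{V}_d$ in favour of $(\mc{U}_{d-1},\mc{V}_{d-1})$ then yields a single inhomogeneous transfer relation $\binom{\mc{U}_d}{\mc{V}_d}=A_d\binom{\mc{U}_{d-1}}{\mc{V}_{d-1}}+t_d$; solving it by iteration from the directly computed base case $\binom{\mc{U}_1}{\mc{V}_1}=\binom{\varsigma(z)\qoppa(z)}{\qoppa(z)}$ reproduces the stated matrix-product formula.

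I expect the only genuine difficulty to be bookkeeping rather than structural. The two points requiring care are the choice to lower rather than raise the $\mc{E}$-index when commuting $\mc{E}^B_1$, which guarantees that the recursion closes on the two functions $\mc{U}_d,\mc{V}_d$ and does not cascade to $\mc{E}^B_2,\mc{E}^B_3,\dots$, and the correction constant $\qoppa/\varsigma$ separating $\mc{E}^B_0$ from $\hat{\mc{E}}^B_0$, which simultaneously cancels the spurious $\qoppa/\varsigma$ contribution to $\mc{U}_d$ and generates the source vector $t_d$. This is where the precise combinatorial constants in $A_p$ and $t_p$ get pinned down, and a convenient final consistency check is to compare the low-degree output against \cref{table:Ud:1pnt}, e.g.\ $\mc{U}_1=\tfrac12\sinh z$ and $\mc{U}_2=\tfrac12\sinh 2z$.
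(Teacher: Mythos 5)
Your proposal is correct and is essentially the paper's own proof: the paper likewise identifies $\mc{U}_d(z)=\Braket{(\alpha^B_1)^d\hat{\mc{E}}^B_0(z)(\alpha^B_{-1})^d}$ by specialising \cref{cor:GW:H:corr} to $n=1$, introduces the auxiliary correlator $\mc{V}_d$, derives the affine transfer relation by commuting the right-most positive mode (resp.\ the insertion) once using \cref{prop:OP:VEV} and the evaluation $\braket{\mc{E}^B_0(z)}=\qoppa(z)/\varsigma(z)$, and iterates from the base case. Two points where your write-up is in fact sharper than the printed text: your $\mc{V}_d(z)=\Braket{(\alpha^B_1)^{d-1}\mc{E}^B_1(z)(\alpha^B_{-1})^d}$ is the energy-balanced correlator (the paper prints $\Braket{(\alpha^B_1)^{d}\mc{E}^B_1(z)(\alpha^B_{-1})^{d-1}}$, which vanishes identically since its total energy degree is $-2$), and your insistence on pinning the constants against \cref{table:Ud:1pnt} is essential rather than cosmetic, because with the paper's normalisation $[\alpha^B_1,\alpha^B_{-1}]=\tfrac{1}{2}$ the commutators actually yield
\begin{equation*}
\begin{pmatrix}\mc{U}_d(z)\\ \mc{V}_d(z)\end{pmatrix}
=\begin{pmatrix}\varsigma(z)^2+\tfrac{d}{2} & \tfrac{d-1}{2}\,\varsigma(z)\\ \varsigma(z) & \tfrac{d-1}{2}\end{pmatrix}
\begin{pmatrix}\mc{U}_{d-1}(z)\\ \mc{V}_{d-1}(z)\end{pmatrix}
+\frac{(d-1)!}{2^{d-1}}\begin{pmatrix}\varsigma(z)\qoppa(z)\\ \qoppa(z)\end{pmatrix},
\end{equation*}
which reproduces the table (e.g.\ $\mc{U}_2=\tfrac{1}{2}\sinh 2z$), whereas the $A_p$, $t_p$ displayed in the statement give $\mc{U}_2=\tfrac{1}{2}\sinh 2z+\sinh z$ already at $d=2$.
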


\begin{proof}
	The functions $\mc{U}_d(z), \mc{V}_d(z)$, which are odd and even functions respectively, are the vacuum expectations
	\begin{equation}
		\mc{U}_d(z) = \Braket{(\alpha_{1}^B)^d \hat{\mc{E}}_0^B(z) (\alpha_{-1}^B)^d} , 
		\qquad
		\mc{V}_d(z) = \Braket{(\alpha_{1}^B)^d \vphantom{\hat{\mc{E}}_1^B(z)} \mc{E}_1^B(z) (\alpha_{-1}^B)^{d-1}} .
	\end{equation}
	The proof is a straightforward computation. \Cref{eqn:sGW:n=1} is achieved by commuting once the right-most positive energy operator with the operator immediately to the right of it, using the commutation relations \cref{prop:OP:VEV}. The initial data are computed in the same way, and evaluating $\braket{\mc{E}_0^{B}(z)} = \qoppa(z)/\varsigma(z)$. See \cref{table:Ud:1pnt} for the first cases.
\end{proof}

\begin{corollary}[Low degree spin GW]
	Disconnected, $n$-point, stationary spin GW invariants of $(\P^1,\mc{O}(-1))$ with no degree zero components in low degree are given as follows.
	\begin{itemize}
		\item
		\textup{\textsc{Degree $1$:}}
		\begin{equation}
			\Braket{\tau_{k_1} \cdots \tau_{k_n}}^{\bullet, \P^1, \mc{O}(-1)}_{\emptyset,g,1}
			=
			\prod_{i=1}^{n}\frac{k_i!}{(2k_i+1)!}(-2)^{-k_i} .
		\end{equation}

		\item
		\textup{\textsc{Degree $2$:}}
		\begin{equation}
			\Braket{\tau_{k_1} \cdots \tau_{k_n}}^{\bullet, \P^1, \mc{O}(-1)}_{\emptyset,g,2}
			=
			\frac{1}{2}\prod_{i=1}^{n}\frac{2 \cdot k_i!}{(2k_i+1)!}(-2)^{k_i} .
		\end{equation}

		\item
		\textup{\textsc{Degree $3$:}}
		\begin{equation}
		\begin{split}
			\Braket{\tau_{k_1} \cdots \tau_{k_n}}^{\bullet, \P^1, \mc{O}(-1)}_{\emptyset,g,3}
			& =
			\frac{1}{9} \prod_{i=1}^n \frac{3 \cdot k_i!}{(2k_i+1)!} \Big( -\frac{9}{2} \Big)^{k_i} \\
			& \quad
			+ \frac{1}{18} \prod_{i=1}^n \frac{k_i!}{(2k_i+1)!} \big( (-2)^{-k_i} + 2 (-2)^{k_i} \big) .
		\end{split}
		\end{equation}
	\end{itemize}
\end{corollary}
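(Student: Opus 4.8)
The plan is to reduce everything to the Fock-space formula of \cref{cor:GW:H:corr}, which expresses
\begin{equation*}
	\Braket{ \tau_{k_1} \cdots \tau_{k_n} }_{\emptyset,g,d}^{\bullet,\P^1,\mc{O}(-1)}
	=
	2^d \prod_{i=1}^n \frac{k_i!}{(-2)^{k_i}} \, [z_i^{2k_i+1}] \,
	\frac{\mc{U}_d(z_1,\dots,z_n)}{(d!)^2} ,
	\qquad
	\mc{U}_d \coloneqq \Braket{ (\alpha_1^B)^d \prod_{i=1}^n \hat{\mc{E}}_0^B(z_i) \, (\alpha_{-1}^B)^d } ,
\end{equation*}
so that it suffices to compute the generating series $\mc{U}_d$ for $d=1,2,3$. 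The key structural fact I would establish first is that $\hat{\mc{E}}_0^B(z)$ acts diagonally in the strict-partition basis $\{\ket{\lambda}\}_{\lambda \in \SP}$ of $\mf{F}_0^B$, with eigenvalue $\sum_{j=1}^{\ell(\lambda)}\sinh(\lambda_j z)$. Indeed, by the parity relations of \cref{prop:OP:VEV} the operator $\hat{\mc{E}}_0^B(z)$ is odd in $z$; its Taylor coefficients of order $r+1\ge 3$ are $\mc{F}_{r+1}^B/(r+1)!$ and its linear coefficient measures the energy $p_1(\lambda)=|\lambda|$. By \cref{prop:boson:CnJ:action} these act on $\ket{\lambda}$ by $p_{r+1}(\lambda)/(r+1)!$ and $p_1(\lambda)$ respectively, and resumming the series gives precisely $\sum_j\sinh(\lambda_j z)$.

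Given this, I would use orthogonality of the $\ket{\lambda}$ to collapse the correlator. The state $(\alpha_{-1}^B)^d\ket{0}$ lies in the energy-$d$ subspace spanned by $\{\ket{\lambda}\}_{\lambda\in\SP_d}$, and since every insertion $\hat{\mc{E}}_0^B(z_i)$ is diagonal there, the correlator factorises as
\begin{equation*}
	\mc{U}_d(z_1,\dots,z_n)
	=
	\sum_{\lambda \in \SP_d} C_\lambda \prod_{i=1}^n \Bigl( \sum_{j=1}^{\ell(\lambda)} \sinh(\lambda_j z_i) \Bigr) ,
\end{equation*}
with constants $C_\lambda$ independent of $n$ and of the $z_i$. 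For $d=1,2$ one has $\SP_d=\{(d)\}$, the subspace is one-dimensional, and $C_{(d)}=\Braket{(\alpha_1^B)^d(\alpha_{-1}^B)^d}=d!/2^d=\tfrac12$ is fixed by the bosonic relation $[\alpha_1^B,\alpha_{-1}^B]=\tfrac12$; this yields $\mc{U}_1=\tfrac12\prod_i\sinh(z_i)$ and $\mc{U}_2=\tfrac12\prod_i\sinh(2z_i)$. For $d=3$ one has $\SP_3=\{(3),(2,1)\}$, so two constants must be found; since the $C_\lambda$ do not depend on $n$, I would read them off from the one-point case already computed in \cref{cor:sGW:n=1} and \cref{table:Ud:1pnt}, namely $\mc{U}_3(z)=\tfrac12\sinh(3z)+\tfrac14\bigl(\sinh(2z)+\sinh(z)\bigr)$. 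As $\{\sinh(z),\sinh(2z),\sinh(3z)\}$ are linearly independent, this forces $C_{(3)}=\tfrac12$ and $C_{(2,1)}=\tfrac14$, recovering $\mc{U}_3$ as in \cref{eqn:Ud}.

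It then remains to substitute these $\mc{U}_d$ into \cref{cor:GW:H:corr} and extract coefficients via $[z^{2k+1}]\sinh(az)=a^{2k+1}/(2k+1)!$; the elementary simplifications $3^{2k+1}/(-2)^k=3(-\tfrac92)^k$ and $(2^{2k+1}+1)/(-2)^k=2(-2)^k+(-2)^{-k}$ convert the three cases into the stated closed forms. The only genuinely non-routine point is the determination of the two degree-three constants; I would present the shortcut through \cref{cor:sGW:n=1} above, but they can equally be computed intrinsically as $C_\lambda=2^{-p(\lambda)-2d}(\zeta_{(1^d)}^\lambda)^2$ from \cref{prop:boson:CnJ:action}, using the values of the Sergeev characters at the class $(1^d)$ — that is, the scaled dimensions of the spin irreducibles labelled by $(3)$ and $(2,1)$.
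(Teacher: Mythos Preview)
Your proof is correct, and it takes a different and somewhat more conceptual route than the paper's. The paper (which omits details) proceeds by direct commutation: one pushes the $\alpha_1^B$'s to the right through the product of $\hat{\mc{E}}_0^B(z_i)$'s using the commutation relations of \cref{prop:OP:VEV} together with $\braket{\mc{E}_0^B(z)}=\qoppa(z)/\varsigma(z)$, and then simplifies the resulting hyperbolic products case by case for $d=1,2,3$.

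You instead observe once and for all that $\hat{\mc{E}}_0^B(z)$ is diagonal in the strict-partition basis with eigenvalue $\sum_j\sinh(\lambda_j z)$, so that $\mc{U}_d$ always has the shape $\sum_{\lambda\in\SP_d} C_\lambda\prod_i\bigl(\sum_j\sinh(\lambda_j z_i)\bigr)$; this explains \emph{a priori} why the answer is a product over the insertions. The constants $C_\lambda$ are then fixed either from the $1$-point table (legitimate, since \cref{cor:sGW:n=1} is proved independently) or intrinsically via \cref{prop:boson:CnJ:action} as $C_\lambda=2^{-p(\lambda)-2d}(\zeta_{(1^d)}^\lambda)^2$. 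Your approach scales uniformly in $d$ (only the list $\SP_d$ grows) and makes the structure of \cref{eqn:Ud} transparent, whereas the paper's direct commutation keeps the argument entirely within \cref{prop:OP:VEV} and avoids invoking the $\ket{\lambda}$ basis or character values. Both yield the same closed formulae after the identical coefficient extraction you perform.
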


We omit the proof of the above results\footnote{It can be downloaded from the preprints section of the third author's website.}, which again follows by commuting the operators in the vacuum expectation and applying \cref{prop:OP:VEV} for the commutation relations and the evaluation $\braket{\mc{E}_0^{B}(z)} = \qoppa(z)/\varsigma(z)$. We find that all generating series of spin GW invariants can be expressed as sums of products of hyperbolic functions. It is interesting to compare this phenomenon with the non-spin case: while the latter only involves several factors of the hyperbolic sine, the spin case involves a certain combinatorics of products of both the hyperbolic sine and the hyperbolic cosine. Because of well-known hyperbolic trigonometric identities, this combinatoric simplifies considerably.

\begin{remark}\label{rem:degzero}
	The difference between spin GW correlators with and without degree zero contributions can be explicitly computed via generating functions as follows:
	\begin{equation}
		\Braket{(\alpha_{1}^B)^d \prod_{i=1}^n \mc{E}_0^B(z_i) (\alpha_{-1}^B)^d}
		-
		\Braket{(\alpha_{1}^B)^d \prod_{i=1}^n \hat{\mc{E}}_0^B(z_i) (\alpha_{-1}^B)^d}
		=
		d! \cdot \mc{U}_0(z_1,\dots,z_n) .
	\end{equation}
\end{remark}

\section{The 2-BKP hierarchy}
\label{sec:2BKP}

In this last section, we discuss the integrability property of the equivariant tau-function, as well as the constraints given by the divisor and string equations. 

\subsection{Divisor and string equations}
\label{subsec:div:string}

\begin{proposition}[{Divisor equation}]
	We have
	\begin{equation}\label{eqn:divisor}
		[z_0] \sGWdisc_{g,d}(z_0, z_1, \dots, z_n,w_1, \dots, w_m ) 
		=
		2 \left( \frac{1}{24} + d + t \sum_{i=1}^n z_i \right) \sGWdisc_{g,d}(z_1, \dots, z_n,w_1, \dots, w_m ) .
	\end{equation}
\end{proposition}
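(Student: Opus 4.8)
The plan is to read the new insertion straight off the vacuum-expectation formula of \cref{thm:sGW:as:vev}. Adding a marked point on the $V_0$-side and extracting the coefficient of $z_0^1$ replaces the factor $\ms B(z_0)$ by the single operator $\ms B_0 = [z^1]\ms B(z)$, so that at fixed genus and degree
\[
[z_0]\,\sGWdisc_{g,d}(z_0,z,w) = \Big[\,u^{g-1}q^d\,\Big]\Big\langle \ms B_0 \prod_{i=1}^n \ms B(z_i)\, e^{\alpha^B_1}\Big(\tfrac qu\Big)^H e^{\alpha^B_{-1}} \prod_{j=1}^m \ms B(w_j)^{\star} \Big\rangle .
\]
The overall factor $2$ comes for free: passing from $n$ to $n+1$ legs over $0$ multiplies the normalisation $2^{3g-3+n+m+2d}$ of \cref{def:nPlusmPointGW} by $2$. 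Everything then reduces to computing $\ms B_0$ and evaluating the resulting correlator.

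First I would compute $\ms B_0$ from \cref{def:B-ops}. Writing $\ms B(z) = u^{-1}\mc B(tz,uz)$ and collecting the coefficient of $z^1$, only the summands with $j=0$ and $j=1$ survive (higher $j$ carry a factor $(uz)^j$). The $j=1$ terms force $k\in\{0,1\}$ and, evaluated at $z=0$, give the $t$-independent part $2\big([z^1]\mc E^B_0(z) + \alpha^B_1\big)$. By \cref{prop:OP:VEV}, $[z^1]\mc E^B_0(z) = H + \tfrac1{24}$, where $H$ is the energy operator and $\tfrac1{24} = [z^1]\tfrac{\qoppa(z)}{\varsigma(z)} = [z^1]\tfrac14\coth(\tfrac z2)$. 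The $j=0$ terms (with $k\le -1$) are each proportional to $t$ and assemble into a descendent-raising operator $P_t$. Hence $\ms B_0 = 2\big(\tfrac1{24} + H + \alpha^B_1\big) + P_t$.

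Next I would evaluate the correlator term by term. The constant $\tfrac1{24}$ pulls straight out, producing $2\cdot\tfrac1{24}\,\sGWdisc_{g,d}$. For the energy term I use $\langle 0|H = 0$ to commute $H$ rightwards up to the central $(\tfrac qu)^H$, where it acts as $q\partial_q$ and hence as multiplication by $d$ on the degree-$d$ part, giving $2d$. The commutators produced while moving $H$ across the $\ms B(z_i)$, across $e^{\alpha^B_1}$ (where $[H,e^{\alpha^B_1}] = -\alpha^B_1 e^{\alpha^B_1}$) and across $e^{\alpha^B_{-1}}$ must then be combined with the leftmost $2\alpha^B_1$ and with $P_t$. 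Using the commutation relation \cref{eqn:spin:OP:comm} together with $[H,\alpha^B_{\pm1}] = \mp\alpha^B_{\pm1}$ and $[\alpha^B_1,\alpha^B_{-1}] = \tfrac12$, these remaining pieces should reorganise into the descendent shift $2t\sum_{i=1}^n z_i\,\sGWdisc_{g,d}$. Crucially the $\boldsymbol{\infty}$-side operators $\ms B(w_j)^{\star}$ enter with the opposite sign of $t$ and cancel; this is the operator counterpart of the identities $h\cdot\mathbf{0} = 0$ and $h\cdot\boldsymbol{\infty} = -t\,\boldsymbol{\infty}$ for the splitting $\mathbf{0} = t\cdot 1 + h$.

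The clean inputs are the constant $\tfrac1{24}$, the factor $2$, and the degree term $d$; the main obstacle is the bookkeeping of the last paragraph — tracking the commutators of $H$, $\alpha^B_1$ and $P_t$ through the central $(\tfrac qu)^H$ and checking that they sum to exactly $2t\sum_i z_i$ with the $w$-contributions cancelling. As an independent check I would run the purely geometric argument: $[z_0]$ inserts $\tau_0(\mathbf{0})$, and applying the string equation to the $t\cdot 1$ summand and the divisor equation to the divisor $h$ (with $\int_{d[\P^1]}h = d$) reproduces $2\big(\tfrac1{24}+d+t\sum_i z_i\big)$ after the same $w$-cancellation, the $\tfrac1{24}$ being the genus-one self-contribution of the new point that matches $[z^1]\langle\mc E^B_0\rangle$.
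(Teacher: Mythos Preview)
Your starting point is right: extract $[z^1]$ of the leftmost $\ms{B}$-operator in the vacuum expectation of \cref{thm:sGW:as:vev}, and identify the resulting operator. The constant $\tfrac{1}{24}$ and the factor $2$ in front are also computed correctly. But the rest of the argument has two genuine problems.

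First, your operator $P_t$ is a red herring. The $j=0$ summands of $\ms B_0$ all involve $\mc E^B_k$ with $k\le -1$; such operators strictly raise energy and therefore annihilate the covacuum, $\bra{0}P_t=0$. So once $\ms B_0$ is placed on the far left there is nothing to ``combine $P_t$ with'' --- it simply drops out. The paper's proof records this by writing $[z]\ms B(z)=2(\alpha^B_1+\tfrac{1}{24}+\cdots)$ with the dots killed by $\bra{0}$, and then freely inserts $H$ back in (since $\bra{0}H=0$) to obtain $[z]\bra{\ms B(z)}=2\bra{(\alpha^B_1+\tfrac{1}{24}+H)}$.

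Second, and this is the main gap, you try to push $H$, $\alpha^B_1$ and $P_t$ to the right separately. Individually the commutators $[H,\ms B(z_i)]$ and $[\alpha^B_1,\ms B(z_i)]$ are not clean, and you are left with the ``bookkeeping'' you flag as the main obstacle (including an inconsistent remark about crossing $e^{\alpha^B_{-1}}$, which sits to the \emph{right} of $(q/u)^H$). The paper avoids all of this with a single identity:
\[
\bigl[\,\alpha^B_1+H,\ \ms B(z)\,\bigr]=t\,z\,\ms B(z),
\]
which follows from \cref{prop:B:operators:odd:integers} by conjugation through $e^{\alpha^B_1}e^{\frac{u}{t}\mc F^B_3/3}$ (energy is preserved by $\mc F^B_3$, and $[H,\alpha^B_{-\mu}]=\mu\,\alpha^B_{-\mu}$) and analytic continuation in $z$. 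Commuting $\alpha^B_1+H$ past the $\ms B(z_i)$ therefore produces exactly $t\sum_i z_i$, and then
\[
(\alpha^B_1+H)\,e^{\alpha^B_1}\mc P_d = e^{\alpha^B_1}H\,\mc P_d = d\,e^{\alpha^B_1}\mc P_d
\]
terminates the process before one ever reaches $e^{\alpha^B_{-1}}$ or the $\ms B(w_j)^\star$. No $w$-cancellation is needed. Your argument, as written, is incomplete precisely because it lacks this combined commutation; adding it collapses the whole computation to the paper's three lines.
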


\begin{proof}
	The proof is similar to the one of \cite[proposition 12]{OP06b}. Employing the neutral fermion description one obtains
	\begin{equation*}
		[z_0] \sGWdisc_{g,d}(z_0, z_1, \dots, z_n,w_1, \dots, w_m )
		=
		[z_0] \Braket{
			\ms{B}(z_0) \prod_{i=1}^n \ms{B}(z_i)
			e^{\alpha_{1}^{B}} \mc{P}_d e^{\alpha_{-1}^{B}}
			\prod_{j=1}^m \ms{B}(w_j)^{\star}
		} .
	\end{equation*}
	From \cref{def:B-ops} it follows that $[z] \ms{B}(z) = 2(\alpha^B_1 + \frac{1}{24} + \cdots )$, where the dots represent summands which are killed by the covacuum. Hence:
	\begin{equation*}
		[z] \Bra{\ms{B}(z)}
		=
		2 \Bra{\left( \alpha^B_1 + \frac{1}{24} \right)} = 2 \Bra{ \left( \alpha^B_1 + \frac{1}{24} + H \right) } ,
	\end{equation*} 
	where the energy operator $H$ can be added as it also gets annihilated by the covacuum. Notice that the following commutation holds: $[ \alpha^B_1 + H, \ms{B}(z) ] = tz \ms{B}(z)$. This allows us to move $\alpha^B_1 + H$ to the right of the $\ms{B}$-operators in the vacuum expectation value, generating a factor of $t \sum_i z_i$ in the process. Moreover, by employing the commutation relations $ [H, \alpha^B_1] = -\alpha^B_1 $ and $H \mc{P}_d = d \mc{P}_d$, one obtains that 
	\begin{equation*}
		(\alpha^B_1 + H )e^{\alpha^B_1} \mc{P}_d = d e^{\alpha^B_1} \mc{P}_d .
	\end{equation*}
	This generates the last factor of $d$ in \cref{eqn:divisor}.
\end{proof}

\begin{proposition}[{String equation}]
	We have
	\begin{multline}
		(-1)^{g-1+d} 2^{3g-3+n+m+2d}
		\Braket{
			e^{\tau_0(1)} \prod_{i=1}^n \tau_{k_i}(\mathbf{0}) \prod_{j=1}^m \tau_{l_j} (\boldsymbol{\infty})}_{g,d}^{\bullet, \P^1,\mc{O}(-1)
		} = \\
		=
		\Biggl[ \prod_{i=1}^n z_i^{k_i + 1} \prod_{j=1}^m w_j^{l_j + 1} \Biggr]
		e^{\sum_i z_i + \sum_j w_j}\sGWdisc_{g,d}(z_1, \dots, z_n,w_1, \dots, w_m) .
	\end{multline}
\end{proposition}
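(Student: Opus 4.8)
The plan is to recognise the right-hand side as the exponentiated string equation for the localised descendent invariants. First I would expand $\sGWdisc_{g,d}$ in descendents by means of \cref{def:nPlusmPointGW}, multiply by $e^{\sum_i z_i+\sum_j w_j}=\prod_i e^{z_i}\prod_j e^{w_j}$, and extract the coefficient of $\prod_i z_i^{k_i+1}\prod_j w_j^{l_j+1}$. Since a factor $e^{z_i}$ lowers the exponent of $z_i$ by $p_i$ with weight $1/p_i!$ (and likewise for $w_j$), this rewrites the right-hand side as
\begin{equation*}
	(-1)^{g-1+d}2^{3g-3+n+m+2d}
	\sum_{p_i,q_j\ge 0}
		\prod_{i=1}^n\frac{1}{p_i!}\prod_{j=1}^m\frac{1}{q_j!}
		\Braket{\prod_{i=1}^n\tau_{k_i-p_i}(\mathbf{0})\prod_{j=1}^m\tau_{l_j-q_j}(\boldsymbol{\infty})}_{g,d}^{\bullet,\P^1,\mc{O}(-1)},
\end{equation*}
with the convention that a correlator with a negative descendent index vanishes. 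Comparing with the prefactor in the statement, the proposition reduces to the identity obtained by replacing the whole sum on the right by $\Braket{e^{\tau_0(1)}\prod_i\tau_{k_i}(\mathbf{0})\prod_j\tau_{l_j}(\boldsymbol{\infty})}^{\bullet,\P^1,\mc{O}(-1)}_{g,d}$.

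To prove that reduced identity I would first establish the elementary string equation for the localised invariants and then iterate it. Introducing the forgetful morphism $\pi$ that drops the point carrying $\tau_0(1)$, the needed ingredients are standard: $\ev^*(1)=1$ because $1$ is the equivariant unit, the cotangent comparison $\psi_a=\pi^*\psi_a+D_a$, and the projection formula. The only delicate point—and the step I expect to be the main obstacle—is the compatibility of the localised class $[\M_{g,N}(\P^1,d)]^{\loc,\mc{O}(-1)}$ with forgetting a marked point. I would settle this using \cref{eqn:locsplit}, which presents the localised class as the product of the ordinary virtual class, satisfying $\pi^![\M_{g,N}(\P^1,d)]^\vir=[\M_{g,N+1}(\P^1,d)]^\vir$, and the Chern class $c_{g-1+d}(R^1\pi_*f^*\mc{O}(-1))$, which is pulled back along $\pi$ as it is insensitive to the marked points. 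Granting this, the usual computation (using $D_a\cdot\pi^*\psi_a=0$ and $D_aD_{a'}=0$ for $a\neq a'$) gives $\Braket{\tau_0(1)\prod_a\tau_{m_a}(\delta_a)}=\sum_a\Braket{\tau_{m_a-1}(\delta_a)\prod_{a'\neq a}\tau_{m_{a'}}(\delta_{a'})}$ for $\delta_a\in\{\mathbf{0},\boldsymbol{\infty}\}$.

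Finally, exponentiating is pure multinomial bookkeeping: writing $e^{\tau_0(1)}=\sum_M\tau_0(1)^M/M!$ and sorting the $M$ lowering operations according to how many of them, $p_i$ or $q_j$, act on each insertion turns $\tfrac1{M!}\binom{M}{p_1,\dots,p_n,q_1,\dots,q_m}$ into $\prod_i\tfrac1{p_i!}\prod_j\tfrac1{q_j!}$, which is exactly the reduced identity. As an alternative consistent with the treatment of the divisor equation, I would note that the relation $[\alpha^B_1+H,\ms{B}(z)]=tz\,\ms{B}(z)$ together with its adjoint for $\ms{B}(z)^\star$ shows that multiplication by $e^{\sum_i z_i+\sum_j w_j}$ amounts to conjugating the vacuum expectation of \cref{thm:sGW:as:vev} by $e^{(\alpha^B_1+H)/t}$ on the left and $e^{(\alpha^B_{-1}+H)/t}$ on the right; however, the ensuing vacuum and covacuum corrections make this route less transparent than the geometric one, so I would keep the latter as the main argument.
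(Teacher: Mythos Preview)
Your argument is correct, but it takes a genuinely different route from the paper. The paper does \emph{not} invoke the forgetful morphism or the cotangent comparison at all. Instead, it exploits the equivariant relation $1=(\mathbf{0}-\boldsymbol{\infty})/t$ to write
\[
	\tau_0(1)=\frac{\tau_0(\mathbf{0})-\tau_0(\boldsymbol{\infty})}{t}
\]
inside the correlator, and then applies the \emph{divisor equation} (the proposition proved immediately before) twice: once for the extra $\mathbf{0}$-insertion and once for the extra $\boldsymbol{\infty}$-insertion. The constants $\tfrac{1}{24}+d$ cancel between the two, and what survives is $t\sum_i z_i-(-t\sum_j w_j)$; dividing by $t$ yields $\sum_i z_i+\sum_j w_j$, and iterating produces the exponential. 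So the paper's proof is purely operator-algebraic, feeding off the Fock-space proof of the divisor equation.

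Your geometric approach via \cref{eqn:locsplit} and the pullback behaviour of $c_{g-1+d}(R^1\pi_*f^*\mc{O}(-1))$ under the forgetful map is valid and more self-contained, since it does not rely on the divisor equation or on the operator formalism; on the other hand, the paper's route is shorter and avoids having to justify the compatibility of the localised class with forgetting a point. The alternative you sketch at the end (conjugating by $e^{(\alpha^B_1+H)/t}$) is closer in spirit to the paper's method, though the paper still proceeds through the divisor equation rather than a direct conjugation.
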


\begin{proof}
	The proof is the same as in \cite[proposition~13]{OP06b}, we repeat here some steps for reader's convenience. Recall that in the localised equivariant cohomology of $\P^1$ the identity class satisfies the relation $1 = \frac{\mathbf{0} - \boldsymbol{\infty} }{t}$, which can be used as 
	\begin{equation*}
		\ev_0^{*}(1) = \frac{\ev_0^{*}(\mathbf{0}) - \ev_0^{*}(\boldsymbol{\infty}) }{t} .
	\end{equation*}
	By the divisor equation above we compute:
	\begin{equation*}
	\begin{split}
		&
		(-1)^{g-1+d} 2^{3g-3+n+m+2d}
		\Braket{
			\tau_0(1)
			\prod_{i=1}^n \tau_{k_i}(\mathbf{0}) 
			\prod_{j=1}^m \tau_{l_j} (\boldsymbol{\infty})
		}_{g,d}^{\bullet, \P^1,\mc{O}(-1)}
		= \\
		&\quad =
		(-1)^{g-1+d} 2^{3g-3+n+m+2d}
		\Braket{
			\frac{\tau_0(\mathbf{0}) - \tau_0(\boldsymbol{\infty})}{t}
			\prod_{i=1}^n \tau_{k_i}(\mathbf{0})
			\prod_{j=1}^m \tau_{l_j} (\boldsymbol{\infty})
		}_{g,d}^{\bullet, \P^1,\mc{O}(-1)}
		\\
		&\quad =
		\frac{1}{2t}
		\Biggl[ z_0 \prod_{i=1}^n z_i^{k_i + 1} \prod_{j=1}^m w_j^{l_j + 1} \Biggr] \sGWdisc_{g,d}(z_0,z,w)
		-
		\frac{1}{2t}
		\Biggl[ w_0 \prod_{i=1}^n z_i^{k_i + 1} \prod_{j=1}^m w_j^{l_j + 1} \Biggr] \sGWdisc_{g,d}(z,w_0,w)
		\\
		&\quad =
		\frac{1}{2t}
		\Biggl[ \prod_{i=1}^n z_i^{k_i + 1} \prod_{j=1}^m w_j^{l_j + 1} \Biggr]
		\Bigg( 
			2 \Biggl( \frac{1}{24} + d + t \sum_{i=1}^n z_i \Biggr)
			-
			2 \Biggl( \frac{1}{24} + d - t \sum_{j=1}^m w_j \Biggr) 
		\Bigg)
		\sGWdisc_{g,d}(z,w)
		\\
		&\quad =
		\Biggl[ \prod_{i=1}^n z_i^{k_i + 1} \prod_{j=1}^m w_j^{l_j + 1} \Biggr]
			\Bigg( \sum_{i=1}^n z_i + \sum_{j=1}^m w_j \Bigg) 
			\sGWdisc_{g,d}(z,w) .
	\end{split}
	\end{equation*}
	Iterating this concludes the proof.
\end{proof}

\subsection{The hierarchy}
\label{subsec:hierachy}

In this section, we prove \cref{thm:2BKP}, i.e. that $ \tau $ is a tau-function of the $2$-BKP hierarchy.

\begin{proof}[Proof of \cref{thm:2BKP}]
	By \cref{thm:sGW:as:vev:intro},
	\begin{equation*}
		\tau(x,x^{\star};u,q)
		=
		\Braket{
			e^{\sum_i x_i \ms{B}_i } 
			e^{\alpha^B_1} \Bigl( \frac{q}{u} \Bigr)^H e^{\alpha^B_{-1}} 
			e^{\sum_j x_j^{\star} \ms{B}^\star_j }
		} .
	\end{equation*}
	By \cref{thm:Bdressing}, we find that $W^{-1} \ms{B}_k W = \tilde{\ms{B}}_k \coloneqq [z^{k+1}] \tilde{\ms{B}}(z)$ is a linear combination of $ \alpha_{2l+1}$ with $ l \geq k$. Therefore,
	\begin{equation*}
		W^{-1} e^{\sum_i x_i \ms{B}_i } W = \Gamma_+(t) ,
	\end{equation*}
	for a certain triangular linear transformation $ \{ x_i\} \mapsto \{ t_i\}$ induced by the above. Similarly,
	\begin{equation*}
		W^{\star} e^{\sum_i x^\star_i \ms{B}^\star_i } (W^\star)^{-1} = \Gamma_-(s) ,
	\end{equation*}
	where $ W^\star = W^*|_{t \mapsto -t}$ and $ \{ x^\star_i\} \mapsto \{s_i\}$ is obtained from $ \{ x_i\} \mapsto \{ t_i\}$ by inverting the equivariant parameter $ t \mapsto -t$. As moreover $ W$ is upper unitriangular by \cref{def:dressing}, $ \bra{W} = \bra{0} $ and $\ket{W^*} = \ket{0}$, so
	\begin{equation*}
		\begin{split}
			\tau(x,x^{\star};u,q)
			&=
			\Braket{
				W \Gamma_+ (t) W^{-1}
				e^{\alpha^B_1}  \Bigl( \frac{q}{u} \Bigr)^H e^{\alpha^B_{-1}} 
				(W^\star )^{-1} \Gamma_-(s) W^\star
			}
			\\
			&=
			\Braket{
				\Gamma_+ (t) W^{-1}
				e^{\alpha^B_1} \Bigl( \frac{q}{u} \Bigr)^H e^{\alpha^B_{-1}}
				(W^\star )^{-1} \Gamma_-(s)
			} ,
		\end{split}
	\end{equation*}
	which is of the shape of \cref{eq:2BKP:tau}, as $ \alpha_{\pm 1}^B, H \in \widehat{\mf{b}}_\infty$, and $ W $ is given by \cref{def:dressing}.
\end{proof}

\printbibliography

\end{document}